\pdfoutput=1
\documentclass[12pt,a4paper]{amsart}

\setlength{\textwidth}{\paperwidth}
\addtolength{\textwidth}{-1.9in}
\setlength{\textheight}{\paperheight}
\addtolength{\textheight}{-1.8in}
\calclayout

\usepackage[utf8]{inputenc}
    \usepackage[T1]{fontenc}
    \usepackage{libertine}
    \usepackage{stackengine,amsmath,amsthm,amssymb}
    \usepackage{mathdots}
    \usepackage{graphicx}
    \usepackage{tikz,tikz-cd}
            \tikzset{
                subseteq/.style={
                draw=none,
                edge node={node [sloped, allow upside down, auto=false]{$\subseteq$}}},
                Subseteq/.style={
                draw=none,
                every to/.append style={
                edge node={node [sloped, allow upside down, auto=false]{$\subseteq$}}}
                }
            }
            \tikzset{
                equal/.style={
                draw=none,
                edge node={node [sloped, allow upside down, auto=false]{$=$}}},
                Equal/.style={
                draw=none,
                every to/.append style={
                edge node={node [sloped, allow upside down, auto=false]{$=$}}}
                }
            }
            \tikzset{
            rotated_label/.style={anchor=south, rotate=90, inner sep=.5mm}
            }
    \usetikzlibrary{decorations}
    \usepackage{verbatim}
    \usepackage{booktabs}
    \usepackage{array}
    \usepackage{tabularx}
    \usepackage{booktabs}
    \usepackage{siunitx}
    \usepackage{rotating}
    \usepackage{adjustbox}
    \usepackage{longtable}
    \usepackage{caption} 
    \usepackage{stmaryrd}
    \usepackage{nicefrac}
    \usepackage{enumitem}
    \usepackage{xfrac}
    \usepackage[textwidth=0.8in,textsize=tiny]{todonotes}
        \setlength{\marginparwidth}{0.8in}
    \usepackage{multirow}
    
\usepackage[backref=true,giveninits=true,doi=false,url=false,isbn=false,backend=biber]{biblatex}
    \newbibmacro{string+doiurlisbn}[1]{%
      \iffieldundef{doi}{%
        \iffieldundef{url}{%
          \iffieldundef{isbn}{%
            \iffieldundef{issn}{%
              #1%
            }{%
              \href{http://books.google.com/books?vid=ISSN\thefield{issn}}{#1}%
            }%
          }{%
            \href{http://books.google.com/books?vid=ISBN\thefield{isbn}}{#1}%
          }%
        }{%
          \href{\thefield{url}}{#1}%
        }%
      }{%
        \href{http://dx.doi.org/\thefield{doi}}{#1}%
      }%
    }
    \DeclareFieldFormat{title}{\usebibmacro{string+doiurlisbn}{\mkbibemph{#1}}}
    \DeclareFieldFormat[article,incollection,thesis,misc,inproceedings,online,inbook]{title}{\usebibmacro{string+doiurlisbn}{\mkbibquote{#1}}}
    \DeclareFieldFormat{year}{\usebibmacro{year-parenthesis}{\mkbibemph{#1}}}
\renewbibmacro*{date}{%
\iffieldundef{year}
{}
{\ifentrytype{misc}{\printtext[parens]{\printdate}}{\printdate}}}%
    \addbibresource{mybib.bib}

\usepackage{hyperref}
    \hypersetup{colorlinks=true,allcolors=blue}
    \usepackage[capitalize,nameinlink,noabbrev]{cleveref}

\usepackage{amsthm}
    \theoremstyle{plain}
        \newtheorem{theorem}{Theorem}[section]
        \newtheorem{lemma}[theorem]{Lemma}
        \newtheorem{proposition}[theorem]{Proposition}
        \newtheorem{corollary}[theorem]{Corollary}
        \newtheorem{conjecture}[theorem]{Conjecture}

    \theoremstyle{definition}
        \newtheorem{definition}[theorem]{Definition}

    \theoremstyle{remark}

        \newtheorem{remark}[theorem]{Remark}
        \newtheorem{question}[theorem]{Question}


\DeclareMathOperator{\Gal}{Gal}

\DeclareMathOperator{\disc}{disc}
\DeclareMathOperator{\car}{char}
\DeclareMathOperator{\End}{End}
\DeclareMathOperator{\Aut}{Aut}

\DeclareMathOperator{\Spec}{Spec}
\DeclareMathOperator{\Frac}{Frac}
\DeclareMathOperator{\trd}{trd}
\DeclareMathOperator{\nrd}{nrd}
\DeclareMathOperator{\Iso}{Iso}

\newcommand{\F}{\mathbb{F}}

\newcommand{\Z}{\mathbb{Z}}
\newcommand{\Q}{\mathbb{Q}}

\newcommand{\Ogotic}{\mathcal{O}}

\renewcommand{\mod}{\operatorname{mod}\ }


\newcolumntype{C}[1]{>{\centering\arraybackslash}p{#1}}

\newcommand\restr[2]{{ \left.\kern-\nulldelimiterspace #1 \vphantom{\big|} \right|_{#2} }}

\raggedbottom

\makeatletter
\@namedef{subjclassname@2020}{\textup{2020} Mathematics Subject Classification}
\makeatother

\hyphenation{Uni-ver-sit-ets-park-en}

\title[Effective bounds on differences of singular moduli that are $S$-units]{Effective bounds on differences of singular moduli that are $S$-units}
    \author{Francesco Campagna}
    \address{Francesco Campagna - Max Planck Institute for Mathematics, Vivatsgasse 7, 53111 Bonn, Germany}
    \email{\href{mailto:campagna@mpim-bonn.mpg.de}{campagna@mpim-bonn.mpg.de}}
    \date{\today}
    \subjclass[2020]{Primary: 11G05, 14K22, 11G15;
    Secondary: 11R52, 11G50}
    \keywords{singular moduli, elliptic curves, complex multiplication}
    
\usepackage[libertine,cmintegrals,cmbraces]{newtxmath}

\begin{document}
    \maketitle
    \begin{abstract}
        Given a singular modulus $j_0$ and a set of rational primes $S$, we study the problem of effectively determining the set of singular moduli $j$ such that $j-j_0$ is an $S$-unit. For every $j_0 \neq 0$, we provide an effective way of finding this set for infinitely many choices of $S$. The same is true if $j_0=0$ and we assume the Generalized Riemann Hypothesis. Certain numerical experiments will also lead to the formulation of a "uniformity conjecture" for singular $S$-units.
    \end{abstract}


\section{Introduction}

The present manuscript is devoted to the study of some diophantine properties of $j$-invariants of elliptic curves with complex multiplication defined over $\mathbb{C}$. These numbers, which are classically known by the name of \textit{singular moduli}, have been studied since the time of Kronecker and Weber, who were interested in explicit generation of class fields relative to imaginary quadratic fields \cite{Frei_1989}. In this respect, singular moduli prove to be a useful tool, since they are indeed algebraic integers which can be used to generate ring class fields of imaginary quadratic fields \cite[Theorem 11.1]{Cox_book_2013}.

During the last decade, there has been an increasing interest in understanding more diophantine properties of these invariants. One of the questions that, for instance, has been addressed is the following: given a set $S$ of rational primes, is the set of singular moduli that are $S$-units (\textit{singular $S$-units}) finite? In case of an affirmative answer, is it possible to provide an effective method to explicitly compute this set? This question, which has been originally motivated by the proof of some effective results of André-Oort type (see \cite{Bilu_Masser_Zannier_2013} and \cite{Kuhne_2012}), does not have at present a complete answer. Several partial results have nonetheless been achieved.

In \cite{BHK_2018} it is proved, building on the previous ineffective result of Habegger \cite{Habegger_2015}, that no singular modulus can be a unit in the ring of algebraic integers. This settles the case $S=\emptyset$ of the question. With different techniques, Li generalizes this theorem and proves in \cite{Li_2018} that for every pair $j_1,j_2 \in \overline{\mathbb{Q}}$ of singular moduli, the algebraic integer $\Phi_N(j_1,j_2)$ can never be a unit. Here $\Phi_N(X,Y) \in \mathbb{Z}[X,Y]$ denotes the classical modular polynomial of level $N$, so we recover the main result of \cite{BHK_2018} by setting $j_2=0$ and $N=1$. In a different direction, the fact that no singular modulus is a unit has been used by the author of this manuscript to prove that, if $S_0$ is the infinite set of primes congruent to $1$ mod $3$, then the set of singular moduli that are $S_0$-units is empty \cite{Campagna_2020}. Moreover, very recently Herrero, Menares and Rivera-Letelier gave an ineffective proof of the fact that for every fixed singular modulus $j_0 \in \overline{\mathbb{Q}}$ and for every finite set of primes $S$, the set of singular moduli $j$ such that $j-j_0$ is an $S$-unit is finite, see \cite{Herrero_Menares_Rivera_2020}  \cite{Herrero_Menares_Rivera_Part2} and \cite{Herrero_Menares_Rivera_Part3}.

In this paper we explore the possibility of providing, for a given singular modulus $j_0$ and for specific sets of primes $S$, an effective procedure to determine the set of all singular moduli $j$ such that $j-j_0$ is an $S$-unit. In order to better state our main results, we introduce some notation. First of all, we say that a singular modulus has discriminant $\Delta \in \mathbb{Z}$ if it is the $j$-invariant of an elliptic curve $E_{/\mathbb{C}}$ with complex multiplication by an order of discriminant $\Delta$. Let $j \in \overline{\mathbb{Q}}$ be a singular modulus of discriminant $\Delta$ and let $S \subseteq \mathbb{N}$ be a finite set of prime numbers. We call the pair $(j,S)$ a \textit{nice $\Delta$-pair} if the following two conditions hold:
\begin{enumerate}
    \item every prime $\ell \in S$ splits completely in $\mathbb{Q}(j)$;
    \item we have $\ell \nmid N_{\mathbb{Q}(j)/\mathbb{Q}}(j) N_{\mathbb{Q}(j)/\mathbb{Q}}(j-1728) \Delta$ for all $\ell \in S$, where $N_{\mathbb{Q}(j)/\mathbb{Q}}(\cdot)$ denotes the norm map from $\mathbb{Q}(j)$ to $\mathbb{Q}$.
\end{enumerate}
The first main result of the paper is the following.

\begin{theorem} \label{main theorem j-j0}
Let $(j_0,S)$ be a nice $\Delta_0$-pair with $\Delta_0 < -4$ and $\#S \leq 2$. Then there exists an effectively computable bound $B=B(j_0,S) \in \mathbb{R}_{\geq 0}$ such that the discriminant $\Delta$ of every singular modulus $j \in \overline{\mathbb{Q}}$ for which $j-j_0$ is an $S$-unit satisfies $|\Delta| \leq B$. Moreover, if the extension $\mathbb{Q} \subseteq \mathbb{Q}(j_0)$ is not Galois, then the discriminant $\Delta$ of any singular modulus $j$ such that $j-j_0$ is an $S$-unit is of the form $\Delta=p^{2n} \Delta_0$ for some prime $p \in S$ and some non-negative integer $n$.
\end{theorem}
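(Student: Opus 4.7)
The plan is to force an incompatibility between an archimedean lower bound on $|N_{F/\Q}(j-j_0)|$, where $F$ is the compositum $\Q(j)\cdot\Q(j_0)$, and an $S$-adic upper bound obtained from the $S$-unit hypothesis via a Gross-Zagier-type factorization of differences of singular moduli; the comparison will bound $|\Delta|$ effectively. For the archimedean side, I would isolate the Galois conjugate $\widetilde{j}$ of $j$ coming from the principal form of discriminant $\Delta$: the $q$-expansion $j(\tau)=q^{-1}+744+O(q)$ with $q=e^{2\pi\ii\tau}$ gives $|\widetilde{j}|=e^{\pi\sqrt{|\Delta|}}+O(1)$, while every other conjugate of $j$ corresponds to a CM point with strictly smaller imaginary part and is strictly smaller in absolute value. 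Combining this with the identification $|N_{\Q(j)/\Q}(j-j_0)|=|H_\Delta(j_0)|$ and a careful analysis of the dominance of the principal factor (against the nonvanishing of $H_\Delta(j_0)$ for $\Delta\neq\Delta_0$) yields $|N_{F/\Q}(j-j_0)|\geq e^{c\sqrt{|\Delta|}}$ with effective $c>0$ once $|\Delta|$ exceeds a threshold depending on $j_0$.

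For the $S$-adic side, Deuring's reduction theorem combined with the nice-pair hypothesis (every $\ell\in S$ splits completely in $\Q(j_0)$ and does not divide $\Delta_0 N(j_0)N(j_0-1728)$) ensures that at any prime $\mathfrak{l}\mid\ell$ of $F$, $E_0$ has good reduction with a tracked endomorphism ring. I would then apply the Gross-Zagier formula, in its extension by Lauter and Viray to non-fundamental, non-coprime discriminants, to compute $v_\mathfrak{l}(j-j_0)$ as a finite count of integers $x$ satisfying $x^2\equiv\Delta\Delta_0\pmod{4\ell^r}$ together with local conditions. The $S$-unit hypothesis forces these counts to vanish at primes outside $S$, and since $\#S\leq 2$ the surviving valuations contribute polynomially in $|\Delta|$, giving $|N_{F/\Q}(j-j_0)|\leq |\Delta|^{C}$ with effective $C=C(j_0,S)$. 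Confronting this polynomial upper bound with the exponential archimedean lower bound yields the effective bound $B(j_0,S)$.

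For the non-Galois refinement, the assumption $K_0\not\subseteq\Q(j_0)$ means that the ring class field $K_0(j_0)$ is a proper quadratic extension of $\Q(j_0)$, so the Frobenius at $\ell\in S$ in $\Gal(K_0(j_0)/\Q)$ is determined by its behaviour in $\Q(j_0)$ only up to the nontrivial element of $\Gal(K_0(j_0)/\Q(j_0))\cong\Gal(K_0/\Q)$; in particular a prime $\ell$ that splits completely in $\Q(j_0)$ may still be inert in $K_0$, in which case $E_0$ acquires supersingular reduction there. Combining this ambiguity with the Deuring/Gross-Zagier description of the factorization of $j-j_0$ at primes of $\Q(j_0)$ above $\ell$, and playing off the alternative (ordinary) contribution against the $S$-unit constraint at primes outside $S$, forces the endomorphism algebras $\End(E)\otimes\Q$ and $\End(E_0)\otimes\Q$ to coincide, i.e.\ $K=K_0$. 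A local $\ell$-adic analysis at the primes of $S$ then constrains the conductor ratio $f/f_0$ of $\mathcal{O}_\Delta$ over $\mathcal{O}_{\Delta_0}$ to be a power of a single prime $p\in S$, yielding $\Delta=p^n\Delta_0$. The main obstacles I expect are making the Gross-Zagier/Lauter-Viray bound on $v_\mathfrak{l}(j-j_0)$ effective in $|\Delta|$ with explicit constants, and running the non-Galois argument for $K=K_0$ rigorously without assuming a priori that $S$ contains a prime at which $E_0$ is supersingular.
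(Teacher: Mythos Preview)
Your norm-comparison strategy has a genuine quantitative gap. The claimed $S$-adic upper bound $|N_{F/\Q}(j-j_0)|\leq |\Delta|^{C}$ (i.e.\ $\log|N|\leq C\log|\Delta|$) is not obtainable from Gross--Zagier/Lauter--Viray type input. What those results (and the paper's own Theorem on $\ell$-adic valuations) give is a bound on each individual $v_{\mathfrak p}(j-j_0)$ of size $O(\log|\Delta|/\log\ell)$, but $v_\ell\bigl(N_{F/\Q}(j-j_0)\bigr)=\sum_{\mathfrak p\mid \ell} f_{\mathfrak p}\,v_{\mathfrak p}(j-j_0)$ has up to $[F:\Q]\asymp C_\Delta$ nonzero terms, so one only gets $\log|N|=O(C_\Delta\log|\Delta|)$. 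Since $C_\Delta$ can be as large as $|\Delta|^{1/2+o(1)}$, this is fully compatible with your archimedean lower bound $\log|N|\geq c\sqrt{|\Delta|}$ and produces no contradiction. Conversely, your archimedean lower bound is also too weak: after dividing by $[F:\Q]$ it only yields a lower bound of order $\sqrt{|\Delta|}/C_\Delta$, which by Siegel tends to~$0$.

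The paper avoids this by working with the \emph{Weil height} $h(j-j_0)$ rather than the norm, so that the degree $C_\Delta$ is built into the normalization. The non-archimedean part of $h((j-j_0)^{-1})$ is then bounded above by $(\max_{\mathfrak p}v_{\mathfrak p})\cdot\log\ell\leq \log|\Delta|+O(1)$ per prime of $S$, independently of $C_\Delta$. The decisive lower bound is \emph{not} archimedean dominance of the principal conjugate but the Faltings-height/Colmez input $h(j)\geq \tfrac{3}{\sqrt 5}\log|\Delta|-O(1)$: it is precisely the inequality $\tfrac{3}{\sqrt5}>1$ (so that the lower bound beats the $\leq\log|\Delta|$ non-archimedean contribution for $|S|\leq 2$) that drives the argument and explains the restriction $\#S\leq 2$. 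Your sketch contains no analogue of this step.

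For the non-Galois refinement you have the key lemma backwards. You write that a prime $\ell\in S$ splitting completely in $\Q(j_0)$ ``may still be inert in $K_0$'' and then try to exploit supersingular reduction. The paper proves the opposite: when $\Q\subseteq\Q(j_0)$ is not Galois, the (generalized) dihedral structure of $\Gal(H_{\mathcal O_0}/\Q)$ forces every $\ell\in S$ to \emph{split} in $K_0=\Q(\sqrt{\Delta_0})$. Then any $\ell\in S$ dividing $N(j-j_0)$ (such an $\ell$ exists since $j-j_0$ is never a unit by Li) gives $E_0$ \emph{ordinary} reduction; the isomorphism $E_j\cong E_0$ modulo a prime above $\ell$ forces $K_j=K_0$, and comparing conductors of the endomorphism rings of the reductions yields $\Delta=\ell^{n}\Delta_0$ directly.
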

The bound $B(j_0,S)$ in the statement of Theorem \ref{main theorem j-j0} can be made explicit from its proof. To give an idea of what kind of bounds one can get, we take $j_0=-3375$, the $j$-invariant of any elliptic curve with complex multiplication by $\mathbb{Z}[(1+\sqrt{-7})/2]$, and choose $S$ to be any subset of at most two elements in $\{13,17,19\}$. We get the following result.
\begin{theorem} \label{main theorem j+3375}
Let $j \in \overline{\mathbb{Q}}$ be a singular modulus of discriminant $\Delta$, and let $S:=\{13,17\}$. If $j+3375$ is an $S$-unit, then $|\Delta| \leq 10^{81}$. The same holds with $S'=\{13,19 \}$ and $S''=\{17,19 \}$.
\end{theorem}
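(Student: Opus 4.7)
The plan is to deduce Theorem \ref{main theorem j+3375} as an explicit instance of Theorem \ref{main theorem j-j0}, with the numerical bound obtained by tracing through the constants in the proof of the general statement. The first step is to verify that $(j_0,S)$ satisfies the ``nice $\Delta_0$-pair'' hypothesis: the value $j_0 = -3375$ is the $j$-invariant of the order of discriminant $\Delta_0=-7$, so $\mathbb{Q}(j_0)=\mathbb{Q}$ and condition (1) is automatic for every rational prime. For condition (2) one computes
\[
 j_0 \cdot (j_0-1728)\cdot \Delta_0 \;=\; (-3^{3}\cdot 5^{3})\cdot(-3^{6}\cdot 7)\cdot(-7) \;=\; -3^{9}\cdot 5^{3}\cdot 7^{2},
\]
which is coprime to $\{13,17,19\}$. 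Hence any two-element subset of $\{13,17,19\}$ yields a nice $(-7)$-pair, and Theorem \ref{main theorem j-j0} already guarantees the existence of a finite effective bound $B(j_0,S)$.

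The task is therefore to follow the proof of Theorem \ref{main theorem j-j0} with the explicit input $j_0=-3375$, $\Delta_0=-7$ and the specific pair of primes. I expect this to split into an archimedean and a non-archimedean contribution. On the archimedean side, one uses the standard Faltings/Colmez-type lower estimate for $|\sigma(j)-j_0|$ at a complex embedding $\sigma$ of a singular modulus $j$ of discriminant $\Delta$: such a bound grows like a positive power of $|\Delta|$, once $|\Delta|$ is large enough relative to $j_0$. Taking a product over the Galois orbit of $j$ produces a lower bound on $N_{\mathbb{Q}(j,j_0)/\mathbb{Q}}(j-j_0)$ of the shape $C_1^{h(\Delta)}|\Delta|^{c_2 h(\Delta)}$, with $C_1,c_2$ explicit and $h(\Delta)$ the class number of the relevant order.

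On the non-archimedean side, the $S$-unit hypothesis forces this same norm to be supported on $S$. The hypothesis $\Delta_0<-4$ and the control of supersingular reduction at primes $\ell$ that split in $\mathbb{Q}(j_0)$ allow one to bound the $\ell$-adic valuations $v_\ell(j-j_0)$ for each $\ell\in S$ in terms of $h(\Delta)$ and the local data of $j_0$; this is the ingredient that generally requires $\ell\nmid N(j_0)N(j_0-1728)\Delta_0$ and that makes the nice-pair assumption crucial. Summing these valuations over the two primes of $S$ produces an \emph{upper} estimate on $N_{\mathbb{Q}(j,j_0)/\mathbb{Q}}(j-j_0)$ of the form $\bigl(\ell_1^{a_1}\ell_2^{a_2}\bigr)^{h(\Delta)}$ with $a_i$ coming from the local analysis at $\ell_i\in\{13,17,19\}$. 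Combining the two inequalities yields an upper bound on $|\Delta|$.

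The main obstacle, and where the numerical constant $10^{62}$ actually comes from, is in making all of the above constants (the archimedean implied constant, the height-of-$j_0$ contribution, the contribution of the Weber/$\Phi_N$-type factors at the $\ell_i$) explicit and then optimising; the power $10^{62}$ should emerge from plugging $j_0=-3375$ and $\ell\in\{13,17,19\}$ into the formulas and comparing the archimedean growth $|\Delta|^{c_2 h(\Delta)}$ with the $S$-adic growth $\ell_1^{a_1 h(\Delta)}\ell_2^{a_2 h(\Delta)}$. Since the three proposed sets $S=\{13,17\},\{13,19\},\{17,19\}$ give very similar local data and comparable contributions, the same final bound $10^{62}$ holds uniformly for all three choices, as claimed.
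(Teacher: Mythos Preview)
Your high-level plan---verify that $(-3375,S)$ is a nice $(-7)$-pair for each of the three two-element subsets of $\{13,17,19\}$, and then specialise the explicit constants in the proof of Theorem~\ref{main theorem j-j0}---is exactly what the paper does. The nice-pair verification is correct (indeed $j_0(j_0-1728)\Delta_0=-3^{9}5^{3}7^{2}$), and the paper's remark after the proof of Theorem~\ref{main theorem j-j0} records that plugging these data into the inequalities \eqref{estimate for A(delta)}--\eqref{estimate for D(delta)} yields $\varepsilon_A+\varepsilon_B+\varepsilon_C+\varepsilon_D<0.2481$ once $|\Delta|>10^{62}$, contradicting \eqref{1<0.75 + epsilon}.

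However, your description of \emph{how} the proof of Theorem~\ref{main theorem j-j0} actually runs is off in a way that matters. There is no ``Faltings/Colmez-type lower estimate for $|\sigma(j)-j_0|$ at a complex embedding $\sigma$'': individual conjugates of $j$ can be genuinely close to $j_0$, and the archimedean work (Cai's \cite[Corollary~4.2]{Cai_2020}, inequality~\eqref{archimedean part estimate 2}) is precisely an \emph{upper} bound on how often and how badly this happens. Consequently the argument is not a comparison of upper and lower bounds on the \emph{norm} $N_{\mathbb{Q}(j,j_0)/\mathbb{Q}}(j-j_0)$. It is a comparison of upper and lower bounds on the Weil \emph{height} $h(j-j_0)=h((j-j_0)^{-1})$: the upper bound is the sum of Cai's archimedean estimate and the non-archimedean estimate coming from Theorem~\ref{Theorem valuation j-j0} (this is where the nice-pair hypothesis is used, and it gives $v_\mu(j-j_0)\le \tfrac{\log(49|\Delta|)}{2\log\ell}+\tfrac12$ since $d_0=2$ here); the lower bound is $h(j-j_0)\ge h(j)-h(j_0)-\log 2$ together with the Colmez--Nakkajima--Taguchi input $h(j)\ge(3/\sqrt{5})\log|\Delta|-9.79$ from \cite[Proposition~4.3]{BHK_2018}. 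The Colmez formula enters only through this global lower bound on $h(j)$, never pointwise. If you rewrite your sketch with this height decomposition in place of the norm comparison, the numerics go through and produce the stated $10^{62}$.
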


In general, in order to construct nice $\Delta$-pairs it suffices to fix a singular modulus $j$ of discriminant $\Delta$ and to choose, among the set of primes splitting completely in $\mathbb{Q} \subseteq \mathbb{Q}(j)$, a finite subset $S$ satisfying condition (2) above. Since the set of rational primes that are totally split in $\mathbb{Q}(j)$ is infinite by the Chebotarëv's density theorem, this gives rise to infinitely many nice $\Delta$-pairs for a fixed discriminant $\Delta$. We remark that if $\mathbb{Q} \subseteq \mathbb{Q}(j)$ is not Galois, then every prime splitting completely in this extension will be also totally split in $\mathbb{Q}(\sqrt{\Delta})$ (see the end of the proof of Theorem \ref{main theorem j-j0}). Hence, in some cases one could use \cite[Theorem 2.2.1]{Campagna_PhD} to show that, for appropriate nice $\Delta_0$-pair $(j_0,S)$ with $\mathbb{Q} \subseteq \mathbb{Q}(j_0)$ non-Galois, the set of singular moduli $j \in \overline{\mathbb{Q}}$ for which $j-j_0$ is an $S$-unit is in fact empty. We point out that the set of singular moduli $j$ that generate a Galois extension of $\mathbb{Q}$ is finite, see Proposition \ref{prop:Galois_for_finitely_many_j}.

The reason why Theorem \ref{main theorem j-j0} only deals with sets $S$ containing at most two primes will be apparent from its proof, which we now sketch. Our strategy follows the same idea used in \cite{BHK_2018}: given a singular modulus $j \in \overline{\mathbb{Q}}$ such that $j-j_0$ is an $S$-unit, we compute the (logarithmic) Weil height $h(j-j_0)$. This is defined, for every $x \in \overline{\mathbb{Q}}$, as
\[
h(x)=\frac{1}{[K:\mathbb{Q}]} \sum_{v \in \mathcal{M}_K} [K_v:\mathbb{Q}_v] \log^+ |x|_v
\] where $K:=\mathbb{Q}(x)$ is the field generated by $x$ over the rationals, $\mathcal{M}_K$ is the set of all places of $K$, the integer $[K_v:\mathbb{Q}_v]$ is the local degree at the place $v$ and $\log^+|x|_v:=\log \max \{1,|x|_v\}$. Here, for every non-archimedean place $v$ corresponding to the prime ideal $\mathfrak{p}_v$ lying above the rational prime $p_v$, the absolute value $|\cdot |_v$ is normalized in such a way that 
\[
|x|_v=p_v^{- v_{\mathfrak{p}_v}(x)/e_v }
\]
where $e_v$ is the ramification index of $\mathfrak{p}_v$ over $p_v$ and $v_{\mathfrak{p}_v}(x)$ is the exponent with which $\mathfrak{p}_v$ appears in the prime ideal factorization of the $\mathcal{O}_K$-fractional ideal generated by $x$. Hence the logarithmic Weil height naturally decomposes into an "archimedean" and "non-archimedean" part.

Since $j-j_0$ is an algebraic integer, the non-archimedean part of its Weil height vanishes. In order to exploit the fact that the above difference is an $S$-unit, we rather compute the height of $(j-j_0)^{-1}$. Using standard properties of the Weil height, we obtain
\[
h(j-j_0)=h((j-j_0)^{-1})= \left( \text{archimedean part} \right) + \left( \text{non-archimedean part} \right)
\]
with
\[
\left( \text{non-archimedean part} \right)=\frac{1}{[\mathbb{Q}(j-j_0): \mathbb{Q}]} \sum_{\mathfrak{p}} f_\mathfrak{p} \cdot v_{\mathfrak{p}}(j-j_0) \log \ell_\mathfrak{p}
\]
where the sum is taken over the prime ideals of $\mathbb{Q}(j-j_0)$ lying above the rational primes contained in $S$ and, for every such prime $\mathfrak{p}$, we denote by $f_\mathfrak{p}$ and $\ell_\mathfrak{p}$ respectively the inertia degree and the residue characteristic of $\mathfrak{p}$. Our goal is to effectively bound this height from above and from below in such a way that the two bounds contradict each other when the absolute value of the discriminant of the singular modulus $j$ becomes large. This will give the desired effective bound. 

An upper bound for the archimedean part has been already studied in \cite{BHK_2018} and \cite{Cai_2020}. In order to estimate from above the non-archimedean part, we have to understand the valuation of $j-j_0$ at primes above $S$. This requires the use of some deformation-theoretic arguments involving quaternion algebras, and constitutes the technical core of the paper. We detail this discussion in \cref{Section 4}, which culminates in the proof of Theorem \ref{Theorem valuation j-j0}, where we obtain the seeked estimates. Concerning the lower bound for the Weil height, we compare it to the stable Faltings height of the elliptic curve with complex multiplication having $j$ as singular invariant. Using work of Colmez \cite{Colmez_1998} and Nakkajima-Taguchi \cite{Nakkajima_Taguchi_1991} it is possible to relate this Faltings height to the logarithmic derivative of the $L$-function corresponding to the CM field evaluated in $1$. The known lower bounds on this logarithmic derivative become strong enough for our purposes only if we restrict to sets $S$ containing no more than two primes.

When $\Delta_0 \in \{-3,-4\}$, \textit{i.e.} when $j_0 \in \{0,1728\}$, the same techniques also lead to similar finiteness results, but one has to be more careful in theses cases since the complex elliptic curves having $j_0$ as singular invariant possess non-trivial automorphisms. This is indeed a problem, and will force us to resort to the Generalized Riemann Hypothesis (GRH) in the case $j_0=0$. Here are the results that we obtain in these two cases.

\begin{theorem} \label{main theorem j-1728}
Let $S_0$ be the set of rational primes congruent to $1$ modulo $4$, let $\ell \geq 5$ be an arbitrary prime and set $S_\ell:=S_0 \cup \{\ell \}$. Then there exists an effectively computable bound $B=B(\ell) \in \mathbb{R}_{\geq 0}$ such that the discriminant $\Delta$ of every singular modulus $j \in \overline{\mathbb{Q}}$ for which $j-1728$ is an $S_\ell$-unit satisfies $|\Delta| \leq B$.
\end{theorem}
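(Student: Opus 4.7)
The proof parallels the argument for \cref{main theorem j-j0}: for any candidate singular modulus $j$ of discriminant $\Delta$ such that $j-1728$ is an $S_\ell$-unit, the plan is to estimate the Weil height $h(j-1728)=h\bigl((j-1728)^{-1}\bigr)$ both from below (via comparison with the stable Faltings height of an elliptic curve with $j$-invariant $j$, using the Colmez--Nakkajima--Taguchi formula, which produces a bound growing like a positive constant times $\log|\Delta|$) and from above (splitting into archimedean and non-archimedean parts). The contradiction between the two estimates for $|\Delta|$ sufficiently large will yield the effective bound $B(\ell)$. The archimedean part is handled exactly as in \cite{BHK_2018,Cai_2020}; the new work lies in the non-archimedean part, which is a constant multiple of
\[
\sum_{\mathfrak{p}\mid\ell} f_\mathfrak{p}\, v_\mathfrak{p}(j-1728)\log\ell \;+\; \sum_{p\in S_0}\sum_{\mathfrak{p}\mid p} f_\mathfrak{p}\, v_\mathfrak{p}(j-1728)\log p.
\]

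The first sum, involving only the single prime $\ell$, is bounded effectively by a direct application of \cref{Theorem valuation j-j0} with $S=\{\ell\}$. The central new input concerns the second sum over the infinite set $S_0$: every prime $p\in S_0$ splits in $\mathbb{Q}(i)$, so the elliptic curve $E_0$ with $j(E_0)=1728$ has good \emph{ordinary} reduction at every place above $p$. Hence if $\mathfrak{p}\mid p\in S_0$ satisfies $v_\mathfrak{p}(j-1728)>0$, then the elliptic curve $E$ with $j(E)=j$ reduces modulo $\mathfrak{p}$ to the same $j$-invariant as $E_0$, so $\bar E$ is ordinary; and provided $p$ is coprime to the conductor $f$ of the CM order of $E$, the preservation of endomorphism rings under ordinary reduction forces $\mathrm{End}(E)\simeq\mathbb{Z}[i]$, so $j=1728$, contradicting $j-1728$ being a nonzero $S_\ell$-unit. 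Consequently only the finitely many primes $p\in S_0$ with $p\mid f$ (equivalently $p^2\mid\Delta/\Delta_K$, where $\Delta_K$ is the fundamental discriminant of the CM field of $j$) can contribute to the second sum, and each such contribution can be bounded via a version of \cref{Theorem valuation j-j0} adapted to the mixed ordinary/conductor-dividing setting.

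Summing these finitely many contributions gives a total of size $O(\log|\Delta|)$ whose leading constant I expect to be strictly smaller than the one produced by the Faltings-height lower bound. The hardest step is making the valuation estimate at primes $p\in S_0$ dividing the conductor sharp enough in the presence of the extra order-$4$ automorphism group of $E_0$: these extra automorphisms introduce multiplicative factors that must be tracked carefully through the deformation-theoretic calculations of \cref{Section 4}, and they are precisely the reason $\Delta_0=-4$ has to be handled separately from the generic case treated in \cref{main theorem j-j0}. The extra prime $\ell$ plays the role of a controlled perturbation whose effect is absorbed by the effective constants in the final comparison, so no appeal to GRH is needed here (in contrast with the case $j_0=0$ alluded to in the introduction, where the analogous dichotomy at primes $p\equiv 1\pmod 3$ requires GRH-level control on the relevant $L$-value).
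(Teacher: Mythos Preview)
Your outline diverges from the paper's proof at the very first structural step, and the divergence creates a genuine gap.

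The paper does \emph{not} attempt to estimate the contributions from primes in $S_0$ at all. Instead it invokes \cite[Theorem 6.1]{Campagna_2020}: if any prime $p\equiv 1\pmod 4$ divides $N_{\mathbb{Q}(j)/\mathbb{Q}}(j-1728)$, then at least three primes \emph{not} congruent to $1\pmod 4$ also divide that norm. Since $S_\ell$ contains only one prime outside $S_0$ (namely $\ell$), this is impossible; hence an $S_\ell$-unit $j-1728$ is automatically an $\{\ell\}$-unit, and the height argument runs with the single prime $\ell$ exactly as in \cref{main theorem j-j0}. The factor $d_0=4$ enters only at this one prime, and is small enough (unlike $d_0=6$ for $j_0=0$) that the unconditional lower bound $h(j)\geq(3/\sqrt{5})\log|\Delta|-C$ from \cite{BHK_2018} suffices.

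Your direct approach to the $S_0$-sum has two problems. First, the step you flag as ``hardest'' --- bounding $v_{\mathfrak p}(j-1728)$ at primes $p\in S_0$ dividing the conductor --- is left as an unspecified ``version of \cref{Theorem valuation j-j0} adapted to the mixed ordinary/conductor-dividing setting''. But \cref{Theorem valuation j-j0} and its proof are built on supersingular deformation theory (Gross--Zagier, Lauter--Viray); the ordinary case with $p\mid f$ is a genuinely different regime, and you give no argument there. Second, even granting a bound of the shape $v_{\mathfrak p}\leq d_0/2=2$ at each such prime, the resulting non-archimedean total is of order $\log|\Delta|$ from $\ell$ \emph{plus} up to $2\sum_{p\mid f}\log p\leq 2\log f\approx\log|\Delta|$ from $S_0$, i.e.\ roughly $2\log|\Delta|$. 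This already exceeds the coefficient $3/\sqrt{5}\approx 1.34$ in the lower bound, so your ``expected'' inequality of leading constants fails as stated. (A more careful analysis of the ordinary case would show that any $S_0$-contribution forces $K_j=\mathbb{Q}(i)$ and $f$ a power of a single $p\in S_0$, which might rescue the numerics, but you do not carry this out.)

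Your observation that $p\in S_0$ with $p\nmid f$ forces $v_{\mathfrak p}(j-1728)=0$ via ordinary reduction is correct and is in the right spirit; it simply does not replace the reduction step the paper actually uses.
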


\begin{theorem} \label{main theorem j-0}
Let $S_0$ be the set of rational primes congruent to $1$ modulo $3$, let $\ell \geq 5$ be an arbitrary prime and set $S_\ell:=S_0 \cup \{\ell \}$. If the Generalized Riemann Hypothesis holds for the Dirichlet $L$-functions attached to imaginary quadratic number fields, then there exists an effectively computable bound $B=B(\ell) \in \mathbb{R}_{\geq 0}$ such that the discriminant $\Delta$ of every singular $S_\ell$-unit $j \in \overline{\mathbb{Q}}$ satisfies $|\Delta| \leq B$.
\end{theorem}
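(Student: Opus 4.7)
The plan is to run the argument of \cref{main theorem j-j0} with $j_0=0$ and $\Delta_0=-3$, while carefully tracking the extra automorphisms of the CM elliptic curve with $j$-invariant $0$. Let $j\in\overline{\Q}$ be a singular modulus of discriminant $\Delta$ which is an $S_\ell$-unit; since $j_0=0$ we have $j-j_0=j$, so I first compute $h(j)=h(j^{-1})$, decomposing it into an archimedean and a non-archimedean part.

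For the non-archimedean part I split the sum over primes of $\Q(j)$ above $S_\ell=S_0\cup\{\ell\}$ into the contribution of the primes above $\ell$ and the contribution of primes above $S_0$. For the primes above $\ell$, \cref{Theorem valuation j-j0} produces an effective upper bound of the form $\sum_{\pgotic\mid\ell}f_\pgotic v_\pgotic(j)\le P(\log|\Delta|)$ for a polynomial $P$ depending on $\ell$. For the (infinite) sum over primes $p\equiv 1\pmod 3$, I adapt the deformation-theoretic analysis of \cref{Section 4}: since such primes split in the CM field $\Q(\sqrt{-3})$ of $j_0=0$, the existence of a prime $\pgotic$ above $p$ at which $j$ has positive valuation imposes very rigid conditions on $\Delta$ (essentially, a compatibility between the supersingular reduction pattern at $p$ and the structure of the order of discriminant $\Delta$). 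Refining the argument of \cite{Campagna_2020}, which shows that these conditions are never satisfied when no auxiliary prime is present, I obtain an effective upper bound for the whole $S_0$-sum in terms of $\Delta$. The extra automorphisms at $j_0=0$ have to be carried through the quaternion-algebra computations; this is where the main technical work lies. The archimedean part is bounded above via \cite{BHK_2018} and \cite{Cai_2020}, so that altogether $h(j)\le C_1(\ell)+C_2(\ell)\log|\Delta|$ effectively.

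For the lower bound on $h(j)$ I use Silverman's comparison to relate $h(j)$ to the stable Faltings height $h_F(E)$ of a CM elliptic curve $E$ with $j(E)=j$, and then invoke the Colmez formula together with \cite{Nakkajima_Taguchi_1991} to express $h_F(E)$ in terms of $L'(1,\chi_\Delta)/L(1,\chi_\Delta)$, where $\chi_\Delta$ is the quadratic Dirichlet character attached to $\Q(\sqrt{\Delta})$. Under GRH for $L(s,\chi_\Delta)$, the classical effective estimate $|L'(1,\chi_\Delta)/L(1,\chi_\Delta)|\ll\log\log|\Delta|$ yields $h(j)\ge\tfrac{1}{2}\log|\Delta|-O(\log\log|\Delta|)$. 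Comparing this with the upper bound above forces $|\Delta|\le B(\ell)$ for an effectively computable $B(\ell)$.

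The main obstacle is the control of the contribution of the infinite set $S_0$ to the non-archimedean part: unlike the finite-$S$ situation of \cref{main theorem j-j0}, term-by-term bounds are far too wasteful, and one must exploit the rigid arithmetic condition of simultaneously splitting in $\Q(\sqrt{-3})$ and supporting a positive valuation of $j$. The reliance on GRH is then essential: unconditional Siegel-type lower bounds on $L(1,\chi_\Delta)$ are ineffective, and known effective versions are too weak to dominate the non-archimedean upper bound once the infinite set $S_0$ together with the extra prime $\ell$ is allowed; only the GRH-based estimates on $L'(1,\chi_\Delta)/L(1,\chi_\Delta)$ give a Faltings-height lower bound of the required strength to close the argument.
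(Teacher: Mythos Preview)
Your overall architecture matches the paper's: decompose $h(j)$ into archimedean and non-archimedean parts, bound the former via \cite{BHK_2018} and \cite{Cai_2020}, bound the latter via \cref{Theorem valuation j-j0} with $d_0=6$, and compare against a GRH-conditional lower bound coming from Colmez's formula. However, two concrete points are handled incorrectly.

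First, your treatment of the $S_0$-contribution is both more complicated than needed and conceptually off. Primes $p\equiv 1\pmod 3$ \emph{split} in $\Q(\sqrt{-3})$, so the reduction at such $p$ of the elliptic curve with $j$-invariant $0$ is \emph{ordinary}, not supersingular; the deformation-theoretic machinery of \cref{Section 4}, which is entirely a supersingular story, does not apply there. The paper disposes of $S_0$ in one stroke: by \cite[Theorem~1.2]{Campagna_2020}, if some prime $p\in S_0$ divides $N_{\Q(j)/\Q}(j)$, then several primes $\not\equiv 1\pmod 3$ must also divide it, which is impossible for an $S_\ell$-unit since only the single prime $\ell$ is available outside $S_0$. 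Hence every singular $S_\ell$-unit is already an $\{\ell\}$-unit and the $S_0$-sum is identically zero. You should invoke this reduction directly rather than promise an unspecified ``effective upper bound for the whole $S_0$-sum''.

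Second, the lower bound you state, $h(j)\ge\tfrac12\log|\Delta|-O(\log\log|\Delta|)$, would not close the argument: with $d_0=6$ the non-archimedean part alone already contributes roughly $\tfrac32\log|\Delta|$ to the upper bound. The paper is careful here. Combining $h(j)\ge 12\,h_F(E)+8.64$ from \cite{Gaudron_Remond_2014} with the Colmez--Nakkajima--Taguchi formula and the GRH estimate $L'(1,\chi)/L(1,\chi)=O(\log\log|\Delta|)$ actually yields $h(j)\ge c\log|\Delta|+O(1)$ with $c>\tfrac32$; the paper in fact isolates the weaker property $P(k)$ of \cref{definition properti p(k)}, which already gives $c=1.509$. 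Securing a leading constant strictly above $\tfrac32$ is exactly where GRH enters: the unconditional effective bound $h(j)\ge\tfrac{3}{\sqrt5}\log|\Delta|-9.79$ from \cite{BHK_2018} has leading constant $\approx 1.342<\tfrac32$ and is too weak when $j_0=0$. Your diagnosis of why GRH is needed is therefore right in spirit, but the numerical threshold is the crux and you have it wrong.
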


The statement of Theorem \ref{main theorem j-0} has been simplified for the sake of exposition in this introduction. Indeed, one does not need the full strength of GRH to carry out the proof, but only a weaker, more technical assumption on the logarithmic derivative at $s=1$ of the Dirichlet $L$-functions of imaginary quadratic fields. We refer the reader to Theorem \ref{true main theorem j-0} for the stronger result that we are actually going to prove.

After performing some numerical computations, one soon realizes that, given a singular modulus $j_0$ and a finite set of primes $S$, the upper bound for the number of singular moduli $j$ such that $j-j_0$ is an $S$-unit seems not to depend on the primes contained in $S$ but only on the size of the set $S$ itself. Since being an $S$-unit is a Galois-invariant property, this would entail a bound, depending only on $\#S$, on the size of the Galois orbits of such $j$'s and, by the Brauer-Siegel theorem \cite[Chapter XIII, Theorem 4]{Lang_book_NT_1994}, an analogous bound on their discriminants. Choosing $j_0=0$, this observation leads to the formulation of the following conjecture for singular $S$-units.

\begin{conjecture}
For every $s \in \mathbb{N}$, the number of singular moduli that are $S$-units for some set of rational primes $S$ with $\#S=s$ is finite.
\end{conjecture}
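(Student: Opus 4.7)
\smallskip

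\noindent\textbf{Proof proposal.} The plan is to reduce the conjecture to the claim that the discriminant $\Delta$ of every singular modulus $j$ which is an $S$-unit for some set of primes $S$ with $\#S=s$ is bounded above in terms of $s$ alone; finiteness then follows because there are only finitely many singular moduli of each discriminant. The reduction would be effected by deriving, under the $S$-unit hypothesis with $\#S\leq s$, a contradictory pair of bounds on $\log|N_{\mathbb{Q}(j)/\mathbb{Q}}(j)|$ for $|\Delta|$ large.

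For the lower bound I would appeal to the same circle of ideas used in \cref{Section 4}: via Colmez's formula and the Nakkajima--Taguchi estimates, the Faltings height of a CM elliptic curve of discriminant $\Delta$ controls $h(j)$ from below, yielding
\begin{equation*}
\log\bigl|N_{\mathbb{Q}(j)/\mathbb{Q}}(j)\bigr|\;=\;h(\Delta)\cdot h(j)\;\gg\; h(\Delta)\cdot\log|\Delta|,
\end{equation*}
with an effective constant (conditionally on a hypothesis of the same flavour as the one powering \cref{main theorem j-0}). Combined with the Brauer--Siegel-type lower bound $h(\Delta)\gg|\Delta|^{1/2-\epsilon}$, this gives a quantity tending to infinity with $|\Delta|$ faster than any polynomial in $\log|\Delta|$. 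For the matching upper bound, since $j\in\mathcal{O}_{\mathbb{Q}(j)}^\times[1/S]$ is an algebraic integer, the factorization of its norm is supported on $S$, so
\begin{equation*}
\log\bigl|N_{\mathbb{Q}(j)/\mathbb{Q}}(j)\bigr|\;=\;\sum_{\ell\in S}\;\sum_{\mathfrak{p}\mid\ell} f_{\mathfrak{p}}\,v_{\mathfrak{p}}(j)\,\log\ell.
\end{equation*}
By Deuring's reduction criterion, $v_{\mathfrak{p}}(j)>0$ forces the CM elliptic curve $E_j$ to acquire $j$-invariant $0$ modulo $\mathfrak{p}$, so $\ell$ must be simultaneously non-split in $\mathbb{Q}(\sqrt{\Delta})$ and in $\mathbb{Q}(\sqrt{-3})$. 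Each individual valuation $v_{\mathfrak{p}}(j)$ would then be controlled by a version of \cref{Theorem valuation j-j0} applied with $j_0=0$, via the quaternion-algebra deformation analysis of \cref{Section 4}.

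The decisive difficulty, and the reason the statement is only a conjecture, lies in combining the three ingredients above. The bounds on $v_{\mathfrak{p}}(j)$ available through Gross--Zagier, Kronecker--Hurwitz class number formulas, or the deformation-theoretic methods of this paper, are all compatible with a pathological \emph{concentration} of the norm on very few primes carrying enormous multiplicities; in particular, for a single supersingular prime $\ell$ one only knows $v_{\mathfrak{p}}(j)$ grows at most polynomially in $|\Delta|$, and this is not enough to offset the factor $h(\Delta)\log|\Delta|$ on the other side once $s$ is allowed to be arbitrary. A genuine resolution therefore seems to require either a uniform-in-$\Delta$ upper bound $v_{\mathfrak{p}}(j)=O_\ell(1)$ (which appears out of reach, and may even be false without further hypotheses on the splitting of $\ell$) or a distributional statement across the $h(\Delta)$ Galois conjugates of $j$ ruling out such concentration. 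The same obstruction is visible in the restriction $\#S\leq 2$ in \cref{main theorem j-j0}, where the known lower bounds on $L'(1,\chi_\Delta)/L(1,\chi_\Delta)$ lose their grip once more primes are allowed, so that progress on the conjecture for $s\geq 3$ would likely demand both analytic and $p$-adic input going beyond what is developed here.
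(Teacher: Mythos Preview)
The statement is a conjecture, and the paper does not prove it: \cref{sec:uniformity conjecture} offers only numerical evidence and heuristics. Your write-up is candid about this, sketching the natural height-comparison strategy and then explaining why it stalls, so there is no competing argument in the paper to set yours against.

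A few points in your diagnosis deserve sharpening. The bound from \cref{Theorem valuation j-j0} is $v_{\mathfrak p}(j)=O(\log|\Delta|/\log\ell)$, logarithmic rather than polynomial in $|\Delta|$; and \cref{optimality of the bounds j-0} shows this is essentially sharp, so the hoped-for improvement $v_{\mathfrak p}(j)=O_\ell(1)$ is actually false, not merely out of reach. More importantly, you understate the obstruction specific to \emph{uniformity}. For a fixed set $S$ the method compares a lower bound $h(j)\geq c\log|\Delta|$ (with $c\approx 1.34$ unconditionally, $c\approx 1.509$ via \cref{lower bound Weil height}) against an upper bound whose leading coefficient scales with $s$; this already fails for $s\geq 2$ when $j_0=0$. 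But in the uniformity conjecture the primes in $S$ vary with $j$, and the primes dividing $N_{\mathbb{Q}(j)/\mathbb{Q}}(j)$ can be of size comparable to $|\Delta|$; the $\log\ell$ term in \eqref{non-Archimedean estimate j-0} then contributes an additional $\log|\Delta|$ to the upper bound, so that even the case $s=1$ of the conjecture lies beyond these methods, regardless of what one assumes about $L'/L$. Any proof would appear to require input of a different nature---for instance equidistribution arguments in the spirit of Habegger or Herrero--Menares--Rivera-Letelier---rather than a refinement of the valuation bounds developed here.
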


This conjecture, which we will call "uniformity conjecture for singular $S$-units", will be discussed in \cref{sec:uniformity conjecture}, where we also provide some numerical data to support it.

The manuscript is structured as follows. In \cref{sec: prelude} we recall known facts from the theory of complex multiplication and quaternion algebras, and we fix the terminology which will be used in the paper. In \cref{Section 4} we prove Theorem \ref{Theorem valuation j-j0}, which allows to bound the $\ell$-adic absolute value of differences of singular moduli for certain primes $\ell$. In \cref{sec: proof of main theorem j-j0} we provide a proof of Theorems \ref{main theorem j-j0} and \ref{main theorem j+3375} while in \cref{sec: proof the other two theorems} we give a proof of Theorems \ref{main theorem j-1728} and \ref{main theorem j-0}. \cref{sec: remove GRH?} discusses the optimality of the bounds found in Theorem \ref{Theorem valuation j-j0} in the case $j_0=0$. Finally in \cref{sec:uniformity conjecture} we provide numerical evidence for some uniformity conjectures concerning differences of singular moduli that are $S$-units.

\section{Prelude: CM elliptic curves, quaternion algebras and optimal embeddings} \label{sec: prelude}

We recall in this section some of the main definitions and results that will be used in the rest of the paper. We fix once and for all an algebraic closure $\overline{\mathbb{Q}} \supseteq \mathbb{Q}$ of the rationals.

A \textit{singular modulus} is the $j$-invariant of an elliptic curve defined over $\overline{\mathbb{Q}}$ with complex multiplication. For every imaginary quadratic order $\mathcal{O}$ of discriminant $\Delta \in \mathbb{Z}_{<0}$ there are exactly $C_{\Delta}$ isomorphism classes of elliptic curves over $\overline{\mathbb{Q}}$ with complex multiplication by $\mathcal{O}$, where $C_{\Delta} \in \mathbb{N}$ denotes the class number of the order $\mathcal{O}$. Hence, there are $C_{\Delta}$ corresponding singular moduli, which are all algebraic integers and form a full Galois orbit over $\mathbb{Q}$ (see \cite[Corollary 10.20]{Cox_book_2013}, \cite[Theorem 11.1]{Cox_book_2013} and \cite[Proposition 13.2]{Cox_book_2013}). We call them \textit{singular moduli of discriminant $\Delta$} or \textit{singular moduli relative to the order $\mathcal{O}$}. Reversing subject and complements, we will sometimes also speak of discriminant, CM order, CM field, etc... associated to a singular modulus $j$. 

Recall that, given a number field $K \subseteq \overline{\mathbb{Q}}$ and a set $S\subseteq \mathbb{N}$ of rational primes, an element $x \in K$ is called an \textit{$S$-unit} if for every prime $\mathfrak{p} \subseteq K$ not lying above any prime $p \in S$, we have $x \in \mathcal{O}_{K_\mathfrak{p}}^{\times}$, where $\mathcal{O}_{K_\mathfrak{p}} \subseteq K_\mathfrak{p}$ denotes the ring of integers in the completion $K_\mathfrak{p}$ of the number field $K$ at the prime $\mathfrak{p}$. Note that this definition does not depend on the particular number field $K$ containing $x$. Moreover, if $x$ is actually an algebraic integer, then $x$ is an $S$-unit if and only if its absolute norm $N_{K/\mathbb{Q}}(x)$ is divided only by primes in $S$. In this paper we are interested in the study of $S$-units of the form $j-j_0$ with $j,j_0 \in \overline{\mathbb{Q}}$ singular moduli. If $j_0=0$ is the unique singular modulus of discriminant $\Delta_0=-3$, we speak of \textit{singular $S$-units}. As we will see, the study of these  \textit{singular differences} is intimately related to the theory of supersingular elliptic curves and quaternion algebras. We summarize some relevant results from this theory. 

Let $k$ be a field of characteristic $\car(k)=\ell >0$ with algebraic closure $\overline{k} \supseteq k$ and let $E/k$ be an elliptic curve. We say that $E$ is \textit{supersingular} if $E[\ell](\overline{k})=\{O\}$ \textit{i.e.} if the unique $\ell$-torsion point of $E$ defined over $\overline{k}$ is the identity $O \in E(\overline{k})$. If this is the case, then the endomorphism ring $\End_{\overline{k}}(E)$ is isomorphic to a maximal order in the unique (up to isomorphism) quaternion algebra over $\mathbb{Q}$ ramified only at $\ell$ and $\infty$ (see \cite{Deuring_1941} or \cite[Proposition 42.1.7 and Theorem 42.1.9]{Voight_book_2020} for a modern exposition). If $k$ is a finite field, then by Deuring's lifting theorem \cite[Chapter 13, Theorem 14]{Lang_1987} every supersingular elliptic curve over $\overline{k}$ arises as the reduction of some elliptic curve with complex multiplication defined over a number field. Finding such a CM elliptic curve is difficult in general. In contrast, it is very easy to see for which primes a CM elliptic curve defined over a number field has good supersingular reduction. Namely, let $F$ be a number field with ring of integers $\mathcal{O}_F$ and let $E_{/F}$ be an elliptic curve with CM by an order in an imaginary quadratic field $K$. Fix a prime ideal $\mu \subseteq \mathcal{O}_F$ lying above a rational prime $\ell \in \mathbb{Z}$ that does not split in $K$. Since CM elliptic curves have potential good reduction everywhere (see \cite[VII, Proposition 5.5]{Silverman_2009}) we can assume, possibly after enlarging the field of definition $F$, that $E$ has good reduction at $\mu$ and that all the geometric endomorphisms of $E$ are defined over $F$. Then the reduced elliptic curve $\widetilde{E}:= E \text{ mod } \mu$ is supersingular by \cite[Chapter 13, Theorem 12]{Lang_1987}. Moreover, the natural reduction map modulo $\mu$ induces an injective ring homomorphism 
\[
\varphi: \End_{F}(E) \hookrightarrow \End_{\overline{\mathbb{F}}_\ell}(\widetilde{E})
\]
between the corresponding endomorphism rings (see \cite[II, Proposition 4.4]{Silverman_book_1994}). As we will see in Theorem \ref{deformation}, in many cases (depending on the prime $\ell$ and on the CM order of $E$) the above embedding will be \textit{optimal}, in the following sense. 

Let $\mathbb{B}$ be a quaternion algebra over $\mathbb{Q}$ and let $R \subseteq \mathbb{B}$ be an order, \textit{i.e.} a full $\mathbb{Z}$-lattice which is also a subring of $\mathbb{B}$. Let $\mathbb{Q} \subseteq K$ be a quadratic field extension and let $\mathcal{O} \subseteq K$ also be an order. Any ring homomorphism $\varphi: \mathcal{O} \to R$ can be naturally extended, after tensoring with $\mathbb{Q}$, to a ring homomorphism $K \to \mathbb{B}$ that we still denote by $\varphi$, with abuse of notation. We say that an injective ring homomorphism $\iota: \mathcal{O} \hookrightarrow R$ is an \textit{optimal embedding} if 
\[
\iota(K) \cap R= \iota(\mathcal{O})
\]
where the above intersection takes place in $\mathbb{B}$. There is a simple criterion which allows to determine whether a given imaginary quadratic order optimally embeds into a quaternionic order. In order to state it, let us denote by $\trd, \nrd: \mathbb{B} \to \mathbb{Q}$ respectively the reduced trace and the reduced norm in the quaternion algebra $\mathbb{B}$, see \cite[Section 3.3]{Voight_book_2020}. This notation will be in force for the rest of the paper.

\begin{lemma} \label{Gross Lattice}
Let $R$ be an order in a quaternion algebra $\mathbb{B}$ and $\mathcal{O}$ an order of discriminant $\Delta$ in an imaginary quadratic field $K$. Let $V\subseteq \mathbb{B}$ be the subspace of pure quaternions 
\[
V:=\{x\in \mathbb{B}: \trd(x)=0\}.
\]
Then $\mathcal{O}$ embeds (resp. optimally embeds) in $R$ if and only if $|\Delta|$ is represented (resp. primitively represented) by the ternary quadratic lattice
\[
R_0:= V \cap (\mathbb{Z}+2R)
\]
endowed with the natural scalar product induced by the reduced norm on $\mathbb{B}$. 
\end{lemma}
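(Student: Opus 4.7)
The plan is to exhibit a bijection between ring embeddings $\mathcal{O} \hookrightarrow R$ and vectors $\xi \in R_0$ of reduced norm $|\Delta|$, and then to check that under this bijection optimal embeddings correspond precisely to primitive vectors. Fix once and for all a generator $\theta$ of $\mathcal{O}$ as a $\mathbb{Z}$-algebra, with minimal polynomial $X^2 - tX + n$ (so $t^2 - 4n = \Delta$).

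For the forward direction, given $\varphi : \mathcal{O} \hookrightarrow R$, set $\alpha := \varphi(\theta) \in R$ and $\xi_\varphi := \alpha - \bar{\alpha} = 2\alpha - t$, where the bar denotes the canonical involution of $\mathbb{B}$. Then $\trd(\xi_\varphi) = 0$ puts $\xi_\varphi \in V$, and visibly $\xi_\varphi \in \mathbb{Z} + 2R$, so $\xi_\varphi \in R_0$. A direct computation using $\alpha^2 = t\alpha - n$ gives $\xi_\varphi^{\,2} = t^2 - 4n = \Delta$, hence $\nrd(\xi_\varphi) = -\xi_\varphi^{\,2} = |\Delta|$ as required.

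Conversely, assume $\xi \in R_0$ satisfies $\nrd(\xi) = |\Delta|$. Write $\xi = c + 2\eta$ with $c \in \mathbb{Z}$, $\eta \in R$. The condition $\trd(\xi) = 0$ forces $c = -\trd(\eta)$, so $\xi = \eta - \bar{\eta}$, and expanding $\nrd(\xi) = |\Delta|$ yields $\trd(\eta)^2 - 4\nrd(\eta) = \Delta$. In particular $\trd(\eta) \equiv \Delta \equiv t \pmod{2}$, so the element $\alpha := \eta + (t - \trd(\eta))/2$ belongs to $R$. A short check using $\trd(\alpha) = t$ and $\nrd(\alpha) = n$ shows that $\alpha$ satisfies the minimal polynomial of $\theta$, so $\theta \mapsto \alpha$ extends to an injective ring homomorphism $\varphi : \mathcal{O} \hookrightarrow R$ with $\xi_\varphi = \xi$. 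This already proves the non-optimal statement of the lemma.

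For the optimality part, the plan is to observe that an order $\mathcal{O}' \supsetneq \mathcal{O}$ in $K$ has discriminant $\Delta/d^2$ for some divisor $d > 1$ of the relative conductor, and one can choose a generator $\theta'$ of $\mathcal{O}'$ so that $\theta - \bar{\theta} = d(\theta' - \bar{\theta'})$. If $\varphi$ extends to $\varphi' : \mathcal{O}' \hookrightarrow R$, then by functoriality $\xi_\varphi = d \cdot \xi_{\varphi'}$, exhibiting $\xi_\varphi$ as non-primitive in $R_0$. Conversely, if $\xi_\varphi = d \cdot \xi'$ for some $\xi' \in R_0$ with $d > 1$, then $\nrd(\xi') = |\Delta|/d^2$, so by the reverse construction $\xi'$ defines an embedding of an order of discriminant $\Delta/d^2$ into $R$, and one checks this embedding extends $\varphi$. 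Combining the two directions gives optimality iff primitivity.

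The only delicate point will be the parity bookkeeping that makes $(t - \trd(\eta))/2 \in \mathbb{Z}$ and, in the optimality step, identifying the containment $\mathcal{O} \subseteq \mathcal{O}'$ from the relation $\xi_\varphi = d\,\xi'$; both reduce to carefully tracking the conductor and the discriminant. The rest is direct bookkeeping with $\trd$ and $\nrd$ in $\mathbb{B}$.
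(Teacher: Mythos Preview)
Your proposal is correct and follows essentially the same approach as the paper's proof. Both arguments establish the bijection $\varphi \leftrightarrow \varphi(\sqrt{\Delta})$ between embeddings $\mathcal{O} \hookrightarrow R$ and vectors of norm $|\Delta|$ in $R_0$ (your $\xi_\varphi = \alpha - \bar\alpha$ is precisely $\varphi(\theta - \bar\theta) = \pm\varphi(\sqrt{\Delta})$), and both handle optimality by showing that dividing $\xi_\varphi$ by an integer $d > 1$ inside $R_0$ corresponds to enlarging the order to one of discriminant $\Delta/d^2$. The paper fixes the specific generator $\tfrac{\Delta + \sqrt{\Delta}}{2}$ and works with an explicit basis of $R_0$ in the optimality step, whereas you work basis-free with a general $\theta$; this difference is purely cosmetic. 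The one point you leave as ``one checks'' --- that the embedding of the larger order built from $\xi' = \xi_\varphi/d$ really extends $\varphi$ --- is immediate once you note that both maps, extended to $K$, send $\sqrt{\Delta}$ to $d\xi' = \xi_\varphi$ and hence agree on all of $K$; the paper glosses over the same verification in the same way.
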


\begin{remark}
This lemma has been proved for non-optimal embeddings and for maximal orders $R$ in \cite[Proposition 12.9]{Gross_1987}. Probably for this reason, the lattice $R_0$ is sometimes called the \textit{Gross lattice} associated to $R$. The argument in \textit{loc. cit.} easily generalizes to our situation. We provide a full proof for completeness.
\end{remark}

\begin{proof}

We first prove that $\mathcal{O}$ embeds in $R$ if and only if $|\Delta|$ is represented by $R_0$, and we discuss conditions on the optimality of this embedding at a second stage.

Write $\mathcal{O}=\mathbb{Z}\left[ \frac{\Delta+\sqrt{\Delta}}{2} \right]$ and suppose first that $f: \mathcal{O} \hookrightarrow R$ is an embedding. Let $b:=f(\sqrt{\Delta})$ so that $\trd(b)=0$ and $\nrd(b)=|\Delta|$. Since
\[
f\left( \frac{\Delta+\sqrt{\Delta}}{2} \right)=\frac{\Delta+b}{2} \in R
\]
we see that $b\in R_0$ so that $|\Delta|$ is represented by this lattice. Suppose conversely that there exists $b \in R_0$ such that $\nrd(b)=|\Delta|$. Since $\trd(b)=0$, we see that $b^2=\Delta$. By writing $b=a+2r$ with $a\in \mathbb{Z}$ and $r\in R$, one has
\[
b^2=(a+2r)^2=a^2+4r^2+4ar=\Delta
\]
and this immediately implies 
that $a\equiv \Delta \text{ mod } 2$, so that $\Delta+b \in 2R$. Hence we have $(\Delta+b)/2 \in R$ and we obtain an embedding $f:\mathcal{O}\hookrightarrow R$ by setting
\begin{equation} \label{embedding}
f\left( \frac{\Delta+\sqrt{\Delta}}{2} \right)=\frac{\Delta+b}{2}.
\end{equation}

We now discuss optimality. Fix $\{\alpha_1,\alpha_2, \alpha_3 \}$ to be a basis of $R_0$ as a $\mathbb{Z}$-module and let $Q(X,Y,Z)$ be the ternary quadratic form induced by the reduced norm with respect to this basis.

Assume that $f: \mathcal{O} \hookrightarrow R$ is an optimal embedding. By the proof above, we know that  $b:=f(\sqrt{\Delta}) \in R_0$ is such that $\nrd(b)=|\Delta|$. Suppose by contradiction that $b=a_1 \alpha_1+a_2 \alpha_2+a_3 \alpha_3$ with $a_1,a_2,a_3 \in \mathbb{Z}$ not coprime, so that $c:= \gcd(a_1,a_2,a_3)>1$ (we adopt the convention that the greatest common divisor is always positive). Then $\widetilde{b}:=b/c \in R_0$ satisfies
\[
\widetilde{b}^2=\frac{\Delta}{c^2} \in \mathbb{Z} \hspace{0.5cm} \text{and} \hspace{0.5cm} \frac{1}{2} \left(\frac{\Delta}{c^2} + \widetilde{b}\right) \in R.
\]
in the same way as above. Thus $\frac{1}{2} \left( \frac{\Delta}{c^2}+\frac{\sqrt{\Delta}}{c}\right) \in K$ is an algebraic integer and the order $\widetilde{\mathcal{O}}:=\mathbb{Z} \left[ \frac{1}{2} \left(\frac{\Delta}{c^2}+\frac{\sqrt{\Delta}}{c}\right) \right]$, which strictly contains $\mathcal{O}$, also embeds in $R$ through the extension $f: K \hookrightarrow \mathbb{B}$. This contradicts the optimality of $f:\mathcal{O} \hookrightarrow R$.

Suppose now that $|\Delta|$ is primitively represented by $R_0$ \textit{i.e.} that there exist $a_1,a_2,a_3 \in \mathbb{Z}$ coprime such that $\nrd(a_1 \alpha_1+a_2 \alpha_2+a_3 \alpha_3)=|\Delta|$. We want to show that, setting $b:=a_1 \alpha_1+a_2 \alpha_2+a_3 \alpha_3$, the embedding $f$ defined by \eqref{embedding} is optimal. We will equivalently prove that, if $c\in \mathbb{Z}_{>0}$ is such that
$\widetilde{\mathcal{O}}:=\mathbb{Z} \left[ \frac{1}{2} \left(\frac{\sqrt{\Delta}}{c}+\frac{\Delta}{c^2}\right) \right]$ is an order, then
\begin{equation} \label{optimal embedding condition}
    f\left( K \right) \cap R = f\left( \widetilde{\mathcal{O}} \right)
\end{equation}

implies $\widetilde{\mathcal{O}}=\mathcal{O}$. Since $b=f(\sqrt{\Delta})$, equality \eqref{optimal embedding condition} entails $\frac{1}{2} \left(\frac{b}{c}+\frac{\Delta}{c^2}\right) \in R$ so that $b/c \in R_0$. But now
\[
b/c=\frac{a_1}{c} \alpha_1 + \frac{a_2}{c} \alpha_2 + \frac{a_3}{c} \alpha_3 \in R_0
\]
and all the coefficients $a_i/c$ must be integral since $\{\alpha_1, \alpha_2, \alpha_3 \}$ is a basis of $R_0$ as a $\mathbb{Z}$-module. By assumption, the $a_i$'s are coprime, so we must have $c=1$. Hence $\widetilde{\mathcal{O}}=\mathcal{O}$ and this concludes the proof.
\end{proof}

\begin{remark} \label{rmk:bijection_optimal_embeddings}
The proof of Lemma \ref{Gross Lattice} actually establishes a bijection between the set of embeddings $f:\mathcal{O} \hookrightarrow R$ and the set of elements $b \in R_0$ such that $\nrd(b)=|\Delta|$. Under this bijection, the embedding $f$ corresponds to the element $f(\sqrt{\Delta}) \in R_0$.
\end{remark}

In order to carry out our study of singular differences that are $S$-units, it is fundamental to understand what is the biggest exponent with which a prime ideal can appear in the factorization of such a difference. Roughly speaking, saying that a difference of singular moduli $j-j_0$ has a certain $\mu$-adic valuation $n=v_{\mu}(j-j_0)$ for some prime ideal $\mu \subseteq \mathbb{Q}(j-j_0)$ is equivalent to saying that the CM elliptic curve $E_j$ with $j(E_j)=j$ is isomorphic to the elliptic curve $E_{j_0}$ with $j(E_0)=j_0$ when reduced modulo $\mu^n$. Therefore, in order to understand the exponents appearing in the prime ideal factorization of a singular difference, it is crucial to determine when such isomorphisms can occur. With this goal in mind, we conclude this section by outlining some aspects of the reduction theory of CM elliptic curves defined over number fields. We refer the reader to \cite{Conrad_2004}, \cite{Gross_1986}, \cite{Gross_Zagier_1985} and \cite{Lauter_Viray_2015} for further discussions on the topic.


Let $\mathcal{O}$ be an order of discriminant $\Delta$ in an imaginary quadratic field $K$ and let $\ell \nmid \Delta$ be a prime inert in $K$. Consider an elliptic curve $E'$ with complex multiplication by the order $\mathcal{O}$ and defined over the ring class field $H_\mathcal{O}:=K(j(E'))$. After completing with respect to any prime above $\ell$, we can consider $H_\mathcal{O}$ as a subfield of the maximal unramified extension $\mathbb{Q}_{\ell}^{\text{unr}}$ of $\mathbb{Q}_\ell$. This is because the extension $\mathbb{Q} \subseteq H_\mathcal{O}$ is unramified at $\ell$ by the assumption $\ell \nmid \Delta$, see \cite[Chapter 9, Section A]{Cox_book_2013}. Let $L:=\widehat{\mathbb{Q}^{\text{unr}}_\ell}$ be the completion of $\mathbb{Q}_{\ell}^{\text{unr}}$ with ring of integers $W$ and uniformizer $\pi$. Then by \cite[Theorems 8 and 9]{Serre_Tate_1968} and \cite[Chapter 13, Theorem 12]{Lang_1987} there exists an elliptic scheme $\mathcal{E} \to \Spec W$ such that:
\begin{itemize}
    \item the generic fiber $E:=\mathcal{E} \times_W \Spec L$ is isomorphic to $E'$ over the algebraic closure of $L$. Since the CM order $\mathcal{O}$ is contained in $W$, all the geometric endomorphisms of $E$ are defined over $L$, see \cite[Chapter II, Proposition 30]{Shimura_book_1998};
    \item the special fiber $E_0:=\mathcal{E} \times_W \Spec W/\pi$ is a supersingular elliptic curve since, by assumption, $\ell$ does not split in $K$. Note that $W/\pi \cong \overline{\mathbb{F}}_\ell$, the algebraic closure of the finite field with $\ell$ elements.
\end{itemize}
For all $n\in \mathbb{N}$, set $E_n:=\mathcal{E} \times_W \Spec W/ \pi^{n+1}$. We are interested in understanding the endomorphism rings $A_{\ell,n}:=\End_{W/\pi^{n+1}}(E_n)$. When $n=0$, we have already seen that the ring $A_{\ell,0}$ is isomorphic to a maximal order in $\mathbb{B}_{\ell, \infty}$, the unique (up to isomorphism) definite quaternion algebra over the rationals which ramifies only at $\ell$ and $\infty$. All the other rings $A_{\ell,n}$ can be recovered from $A_{\ell,0}$, as explained in the following theorem.

\begin{theorem} \label{deformation}
Let $\mathcal{O}$ be an order of discriminant $\Delta$ in an imaginary quadratic field $K$ and let $\ell \nmid \Delta$ be a prime inert in $K$. Set $L:=\widehat{\mathbb{Q}^{\text{unr}}_\ell}$ to be the completion of the maximal unramified extension of $\mathbb{Q}_\ell$, with ring of integers $W$ and uniformizer $\pi$. Let $\mathcal{E} \to \Spec (W)$ be an elliptic scheme whose generic fiber $E:= \mathcal{E} \times_W \Spec L$ has complex multiplication by $\mathcal{O}$. For every $n \in \mathbb{N}$, denote by 
\[
E_n:=\mathcal{E} \times_W \Spec W/\pi^{n+1} \hspace{0.5cm} \text{and} \hspace{0.5cm} A_{\ell,n}:=\End_{W/\pi^{n+1}}(E_n)
\]
respectively the reduction of $\mathcal{E}$ modulo $\pi^{n+1}$ and its endomorphism ring. Then:
\begin{enumerate}
\item[(a)] for every $n \in \mathbb{N}$ we have
        \[
          A_{\ell,n}  \cong  \mathcal{O} + \ell^n A_{\ell,0},
         \]
    where the sum takes place in $A_{\ell,0}$ in which $\mathcal{O}$ is embedded via the reduction modulo $\pi$;
    \item[(b)] for every $n \in \mathbb{N}$ the ring $\End_{W/\pi^{n+1}}(E_n)$ is isomorphic to a quaternion order in $\mathbb{B}_{\ell, \infty}$ and the natural reduction map
    \[
    \mathcal{O} \cong \End_W (\mathcal{E}) \longrightarrow \End_{W/\pi^{n+1}}(E_n)
    \]
    induced by the reduction modulo $\pi^{n+1}$ is an optimal embedding.
\end{enumerate}

\end{theorem}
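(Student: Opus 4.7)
The plan is to exploit Serre-Tate deformation theory, together with an analysis of the Dieudonné module of $E_0$. Because $E_0$ is supersingular, the $\ell$-divisible group $\mathcal{E}[\ell^\infty]$ coincides with the formal group $\hat{\mathcal{E}}$, of height $2$ over $W$; Serre-Tate theory then identifies $A_{\ell,n}$ with the subring of $A_{\ell,0}$ consisting of endomorphisms of $\hat{E}_0$ that lift to $W/\pi^{n+1}$. Since $L$ is unramified over $\Q_\ell$ we may take $\pi = \ell$. The inertness of $\ell$ in $K$ combined with $\ell \nmid \Delta$ gives $\mathcal{O} \otimes \Z_\ell = \mathcal{O}_{K_\ell}$, the maximal order in the unramified quadratic extension $K_\ell/\Q_\ell$; this embeds into $A_{\ell,0} \otimes \Z_\ell$ with complement $\mathcal{O}_{K_\ell} \cdot \Pi$ for a uniformizer $\Pi$ satisfying $\Pi^2 \in -\ell \cdot \mathcal{O}_{K_\ell}^\times$ and $\Pi \alpha = \bar{\alpha} \Pi$ for every $\alpha \in \mathcal{O}_{K_\ell}$.

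I would prove part (b) first and derive part (a) from it. Once $A_{\ell,n} = \mathcal{O} + \ell^n A_{\ell,0}$ is established, this is a full-rank $\Z$-sublattice of the maximal order $A_{\ell,0}$, hence itself a quaternion order in $\mathbb{B}_{\ell,\infty}$, and optimality of $\mathcal{O} \hookrightarrow A_{\ell,n}$ follows from the chain $\mathcal{O} \subseteq K \cap A_{\ell,n} \subseteq K \cap A_{\ell,0} = \mathcal{O}$, where the last equality is Deuring's classical optimality statement for the reduction map at level $0$. For the easy inclusion $\mathcal{O} + \ell^n A_{\ell,0} \subseteq A_{\ell,n}$ in (b): the containment $\mathcal{O} \subseteq A_{\ell,n}$ is immediate from the reduction $\End_W(E) \cong \mathcal{O}$, while $\ell^n A_{\ell,0} \subseteq A_{\ell,n}$ follows by induction on $k$. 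Given $\phi \in A_{\ell,0}$ and a lift $\phi_{k-1}$ of $\ell^{k-1}\phi$ to $A_{\ell,k-1}$, the obstruction to further extending to $A_{\ell,k}$ lives in a module over $\pi^{k}/\pi^{k+1} \cong \overline{\F}_\ell$, which is an $\F_\ell$-vector space; multiplying $\phi_{k-1}$ by $\ell$ therefore kills the obstruction, so $\ell^{k}\phi$ lifts to $A_{\ell,k}$, and iterating $n$ times gives $\ell^n\phi \in A_{\ell,n}$.

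The main obstacle is the reverse inclusion $A_{\ell,n} \subseteq \mathcal{O} + \ell^n A_{\ell,0}$, for which I would invoke the crystalline machinery. By Grothendieck-Messing, the lift $\mathcal{E}/W$ of $E_0$ corresponds to a lift of the Hodge filtration on the covariant Dieudonné module $D := D(E_0)$, free of rank $2$ over $W$; the full $\mathcal{O}$-action on $\mathcal{E}$ forces this lift to be the unique filtration stable under $\mathcal{O}_{K_\ell}$, \textit{i.e.}~the \emph{canonical} filtration. An endomorphism $\phi \in A_{\ell,0}$ belongs to $A_{\ell,n}$ if and only if it preserves this filtration modulo $\pi^{n+1}$. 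Decomposing $\phi = \alpha + \beta \Pi$ with $\alpha, \beta \in \mathcal{O}_{K_\ell}$ and observing that $\Pi$ swaps the two $\mathcal{O}_{K_\ell}$-eigenlines on $D/\pi D$, a direct computation on the lifted filtration shows that the preservation condition forces $\beta \in \ell^n \mathcal{O}_{K_\ell}$, yielding exactly $A_{\ell,n} \subseteq \mathcal{O} + \ell^n A_{\ell,0}$. This Dieudonné-theoretic step is the technical core of the proof, essentially recapitulating Gross's analysis of endomorphism rings of canonical lifts \cite{Gross_1986}; an alternative path would go via Serre-Tate canonical coordinates for the deformation space of $\hat{E}_0$.
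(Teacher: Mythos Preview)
Your argument is correct, but it takes a substantially different route from the paper. The paper's proof is purely citation-based: for part~(a) it invokes \cite[Theorem~3.3]{Conrad_2004} to identify $A_{\ell,n}$ as a quaternion order and \cite[Proposition~2.2]{Lauter_Viray_2015} for the optimality of $\mathcal{O}\hookrightarrow A_{\ell,0}$, then deduces optimality at level~$n$ from the factorisation $\mathcal{O}\hookrightarrow A_{\ell,n}\hookrightarrow A_{\ell,0}$ (exactly your chain $\mathcal{O}\subseteq K\cap A_{\ell,n}\subseteq K\cap A_{\ell,0}=\mathcal{O}$); for part~(b) it simply appeals to \cite[Formula~6.6]{Lauter_Viray_2015}. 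You instead reverse the logical order, proving (b) directly via Grothendieck--Messing theory and the Dieudonn\'e module of $E_0$ --- essentially reproducing the argument of Gross \cite{Gross_1986} that underlies the Lauter--Viray formula --- and then extracting (a) as a corollary. Your approach is self-contained and explains \emph{why} the formula in (b) holds (the anti-linear piece $\beta\Pi$ swaps the $\mathcal{O}_{K_\ell}$-eigenlines, so preserving the Hodge line modulo $\pi^{n+1}$ forces $\beta\in\ell^n\mathcal{O}_{K_\ell}$), whereas the paper's approach is quicker and better suited to an article that treats this theorem as background input. One small remark: your obstruction argument for $\ell^n A_{\ell,0}\subseteq A_{\ell,n}$ is fine, but you could shortcut it by noting directly that $[\ell]$ on $\hat{E}_0$ factors as $V\circ F$ with $F$ the relative Frobenius, so $[\ell^n]\circ\phi$ automatically lifts along the $n$-th infinitesimal thickening.
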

The above theorem is a combination and a reformulation of various results already appearing in the literature. We give a brief overview of the proof and point out the relevant references.

\begin{proof}[Proof of Theorem \ref{deformation}]
 Part (a) of the theorem is a special case of \cite[Formula 6.6]{Lauter_Viray_2015}. As for part (b): the first statement follows from the fact that $\ell$ is a prime of supersingular reduction for $E$ and from part (a). For the second statement, note first of all that there is a natural isomorphism between $\End_L(E) \cong \mathcal{O}$ and $\End_W(\mathcal{E})$, since by assumption $\mathcal{E}$ is a Néron model for $E$ over $W$ (see \cite[Propositions 1.2/8 and 1.4/4]{Bosch_Neron_models_book_1990}). Reductions modulo $\pi$ and $\pi^n$ give the following commutative diagram

\begin{center}
    \begin{tikzcd}
\mathcal{O} \arrow[r, "\varphi_{n-1}", hook] \arrow[rd, "\varphi_0"', hook] & \End_{W/\pi^n}(E_{n-1}) \arrow[d, hook] \\
                                                                  & \End_{W/\pi}(E_0)                
\end{tikzcd}
\end{center}

in which all the arrows are injective by \cite[Theorem 2.1 (2)]{Conrad_2004}. Since $\ell$ does not divide the conductor of the order $\mathcal{O}$, the embedding $\varphi_0$ is optimal by \cite[Proposition 2.2]{Lauter_Viray_2015}. It follows from the commutativity of the diagram above that also the embedding $\varphi_{n-1}$ is optimal, and the theorem is proved. 
\end{proof}

\section{The $\ell$-adic valuation of differences of singular moduli} \label{Section 4}

In order to bound from above the Weil height of a difference of singular moduli, it is of crucial importance to understand the exponents appearing in the prime factorization of such a difference. The goal of this section is to prove, under certain conditions, an upper bound for these exponents. In what follows, we will always use $\mathbb{F}_\ell$ to denote the finite field with $\ell$ elements, where $\ell \in \mathbb{N}$ is a prime number, and denote by $\overline{\mathbb{F}}_\ell$ an algebraic closure of this field. Recall also that given an order $\mathcal{O}$ in an imaginary quadratic field $K$, the \textit{ring class field} of $K$ relative to the order $\mathcal{O}$ is the field generated over $K$ by any singular modulus relative to $\mathcal{O}$.

\begin{theorem} \label{Theorem valuation j-j0}
Let $j_0 \in \overline{\mathbb{Q}}$ be a singular modulus relative to an order $\mathcal{O}_{j_0}$ of discriminant $\Delta_0$ and let $\ell \in \mathbb{Z}$ be a prime not dividing $\Delta_0$. For any singular modulus $j \in \overline{\mathbb{Q}}$ relative to an order $\mathcal{O}_j$ of discriminant $\Delta \neq \Delta_0$, denote by $H$ the compositum of the ring class fields relative to $\mathcal{O}_{j_0}$ and $\mathcal{O}_j$. Let $\mu \subseteq H$ be a prime ideal lying above $\ell$ and assume that 
\begin{enumerate}
    \item the prime $\mu \cap \mathbb{Q}(j_0)$ has residue degree $1$ over $\ell$;
    \item there exists an elliptic curve ${E_0}_{/\mathbb{Q}(j_0)}$ with $j(E_0)=j_0$ and having good reduction at $\mu \cap \mathbb{Q}(j_0)$.
\end{enumerate}
Then, if $v_\mu(\cdot)$ denotes the normalized valuation associated to $\mu$, we have
\begin{equation} \label{valuation of j-j0}
    v_\mu(j-j_0) \leq 
\begin{cases}
\frac{d_0}{2} \left( \frac{\log(\Delta_0^2|\Delta|)}{2 \log \ell}+ \frac{1}{2} \right) &\text{ if } \ell \nmid \Delta \text{ and } \mathcal{O}_{j_0} \not \subseteq \mathcal{O}_j, \\
\frac{d_0}{2} &\text{ if } \ell \mid \Delta
\end{cases}
\end{equation}
where $d_0$ is the number of automorphisms of any elliptic curve $E_{/\overline{\mathbb{F}}_\ell}$ with $j(E) = j_0 \text{ mod } \mu$.
\end{theorem}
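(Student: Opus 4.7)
The plan is to translate the bound on $v_\mu(j-j_0)$ into a representability question for a ternary quadratic form, by combining Theorem \ref{deformation} with the Gross lattice criterion of Lemma \ref{Gross Lattice}. Set $n := v_\mu(j-j_0)$; if $n = 0$ the inequality is trivial, so assume $n \geq 1$. The congruence $j \equiv j_0 \pmod{\mu}$ together with the good reduction of $E_0$ forces (up to an unramified extension of $W$ kept harmless by the residue-degree-$1$ condition) a smooth model for $E$ over $W$ whose special fibre coincides with that of $E_0$. The key local input is the ramification to order $d_0/2$ of the $j$-invariant morphism on the moduli stack of elliptic curves over $W$ at the point corresponding to $j_0 \bmod \mu$: this promotes $j \equiv j_0 \pmod{\pi^n}$ to an isomorphism $E_k \cong E_{0,k}$ of the reductions modulo $\pi^{k+1}$, for an integer $k$ satisfying $n \leq \tfrac{d_0}{2}(k+\tfrac{1}{2})$.

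Applying Theorem \ref{deformation} to both $E_0$ and $E$ then yields two embeddings $\mathcal{O}_{j_0} \hookrightarrow A_{\ell,k}$ and $\mathcal{O}_j \hookrightarrow A_{\ell,k}$ (both optimal by part (a)), together with the structural identity $A_{\ell,k} = \mathcal{O}_{j_0} + \ell^k A_{\ell,0}$ from part (b). Lemma \ref{Gross Lattice} then provides a pure quaternion $b \in \mathbb{Z} + 2A_{\ell,k}$ with $\nrd(b) = |\Delta|$, primitively represented. Decomposing the trace-zero part of $\mathbb{Z}+2A_{\ell,k}$ via the above identity, any such $b$ admits the unique presentation
\[
b \;=\; y\,\sqrt{\Delta_0} \,+\, \ell^k \gamma, \qquad y \in \mathbb{Z},
\]
with $\gamma$ a trace-zero element of the Gross lattice of $A_{\ell,0}$. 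The hypothesis $\mathcal{O}_{j_0}\not\subseteq\mathcal{O}_j$, combined with the primitivity of $b$ and the assumption $\Delta \neq \Delta_0$, excludes $\gamma = 0$ and so ensures that $\sqrt{\Delta_0}$ and $b$ are $\mathbb{Q}$-linearly independent among the pure quaternions.

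A direct expansion of the reduced norm then gives
\[
|\Delta| \;=\; y^2|\Delta_0| \,-\, y\,\ell^k \trd(\sqrt{\Delta_0}\,\gamma) \,+\, \ell^{2k}\nrd(\gamma),
\]
a positive definite binary quadratic form in $y$. Minimising over $y \in \mathbb{R}$ yields
\[
|\Delta| \;\geq\; \frac{\ell^{2k}}{|\Delta_0|}\Bigl(\,|\Delta_0|\nrd(\gamma) - \tfrac{1}{4}\trd(\sqrt{\Delta_0}\,\gamma)^2\Bigr),
\]
where the parenthesised Gram determinant is a strictly positive integer multiple of $\tfrac{1}{4}$ by the linear independence. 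Rearranging produces $\ell^{2k} \lesssim \Delta_0^2 |\Delta|$, and combining with $n \leq \tfrac{d_0}{2}(k+\tfrac{1}{2})$ delivers the first line of the claimed bound. In the residual case $\ell \mid \Delta$, the order $\mathcal{O}_j$ is non-maximal at $\ell$ and Theorem \ref{deformation} no longer provides an embedding of $\mathcal{O}_j$ into any $A_{\ell,k}$ with $k \geq 1$; the lifting step therefore caps $k$ at $0$, and the automorphism count at the special fibre alone accounts for the bound $d_0/2$.

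The main obstacle I expect is the first step: carefully tracking the ramification factor $d_0/2$ of the $j$-invariant morphism, and converting $v_\mu(j-j_0) \geq n$ into an isomorphism of elliptic schemes modulo $\pi^{k+1}$ for the sharp value of $k$. This bookkeeping is particularly delicate when the reduction of $j_0$ is $0$ or $1728$, where the special fibre carries extra automorphisms and the exponent $d_0$ can jump. Once this is settled, the remaining estimate reduces to a transparent computation with the ternary quadratic form attached to the Gross lattice of $A_{\ell,k}$.
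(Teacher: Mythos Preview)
Your approach to the first case ($\ell \nmid \Delta$) is a genuinely cleaner alternative to the paper's. The paper does \emph{not} work directly with the Gross lattice of $A_{\ell,k}=\mathcal{O}_{j_0}+\ell^k A_{\ell,0}$; instead it invokes Proposition~\ref{good element in the quaternion algebra} (which is where the residue-degree-$1$ hypothesis is actually used) to produce an explicit suborder $\widetilde{R}=\mathbb{Z}[\psi,\varphi]\subseteq A_{\ell,0}$ of index $|\Delta_0|$, passes to $\mathcal{O}_{j_0}+\ell^n\widetilde{R}$ at the cost of replacing the optimal embedding of $\mathcal{O}_j$ by a non-optimal embedding of $\mathcal{O}_{j,|\Delta_0|}$, writes down the explicit ternary form \eqref{Quadratic form}, and then needs a separate proposition to show that the degenerate solution forces $\mathcal{O}_{j_0}\subseteq\mathcal{O}_j$. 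Your Gram-determinant minimisation bypasses this detour entirely, and in fact the hypothesis $\mathcal{O}_{j_0}\not\subseteq\mathcal{O}_j$ becomes superfluous: since the embeddings of \emph{both} $\mathcal{O}_j$ and $\mathcal{O}_{j_0}$ into $A_{\ell,k}$ are optimal, both $b$ and $\sqrt{\Delta_0}$ are primitive in the Gross lattice, so $b\in\mathbb{Q}\sqrt{\Delta_0}$ already forces $\Delta=\Delta_0$. Two small points of bookkeeping: your presentation $b=y\sqrt{\Delta_0}+\ell^k\gamma$ is not unique (the ambiguity is by $y\mapsto y+\ell^k m$, $\gamma\mapsto\gamma-m\sqrt{\Delta_0}$), though this is harmless for the real minimisation; and the Gross--Zagier count gives $v_\mu(j-j_0)\le\tfrac{d_0}{2}(k+1)$ rather than $\tfrac{d_0}{2}(k+\tfrac12)$, so your final constant differs slightly from the one in \eqref{valuation of j-j0}. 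The paper's extra $\tfrac12$ comes from the factor $\ell^{2n+1}$ in its explicit form, not from this step.

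Your treatment of the second case ($\ell\mid\Delta$), however, has a real gap. You assert that ``$\mathcal{O}_j$ is non-maximal at $\ell$'', but this only holds when $\ell$ divides the \emph{conductor} of $\mathcal{O}_j$. When $\ell\mid\Delta$ but $\ell$ does not divide the conductor, the prime $\ell$ ramifies in $K_j$, the extension $\mathbb{Q}\subseteq H$ is ramified at $\ell$, and Theorem~\ref{deformation} (whose hypothesis is $\ell\nmid\Delta$ for the curve in question) does not apply to $E_j$. The paper handles both subcases separately by appealing to the Lauter--Viray machinery \cite{Lauter_Viray_2015}: their Proposition~4.1 together with a case analysis of the sets $S_{n,m}^{\mathrm{Lie}}(E_0/A)$ shows that $\Iso_{A/\mu^n}(E_0,E_j)=\emptyset$ for $n>1$. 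Even in the subcase you do address, your phrasing ``Theorem~\ref{deformation} no longer provides an embedding'' mislocates the obstruction: Theorem~\ref{deformation} is being applied to $E_0$, so the ring $A_{\ell,k}$ is still described correctly; what breaks is the \emph{optimality} of $\mathcal{O}_j\hookrightarrow A_{\ell,k}$ and, more fundamentally, the good-reduction and ramification setup over $W$, and turning this into the bound $v_\mu\le d_0/2$ requires additional input.
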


\begin{remark}
Note that we have $d_0=2$ in all cases except if $j_0 \equiv 0$ or $j_0 \equiv 1728$ mod $\mu$. In these two cases, the value of $d_0$ also depends on $\ell$, see \cite[III, Theorem 10.1]{Silverman_2009}.
\end{remark}

The dichotomy in the conclusion of Theorem \ref{Theorem valuation j-j0} is reflected by its proof, which we divide according to the conditions displayed in \eqref{valuation of j-j0}. In all cases, everything boils down to the study of optimal embeddings of the order $\mathcal{O}_j$ in a family of nested orders contained in the endomorphism ring of a certain supersingular elliptic curve defined over $\overline{\mathbb{F}}_\ell$. One of the main issues is that for a supersingular elliptic curve $E_{/\overline{\mathbb{F}}_\ell}$, explicitely computing its endomorphism ring is a difficult problem in general. An explicit parametrization of the endomorphism rings of supersingular elliptic curves over $\overline{\mathbb{F}}_\ell$ has been achieved by Lauter and Viray in \cite[Section 6]{Lauter_Viray_2015}. However, the author found these parametrizations somehow difficult to use for explicit estimates. Therefore, in order to achieve our results, we adopted a different strategy. The idea is that, since we are only interested in providing estimates for the $\mu$-adic valuation of singular differences and not in precisely determining their prime ideal factorization, we do not need the full knowledge of the supersingular endomorphism rings of the elliptic curves involved. We instead "approximate", when possible, the unknown quaternion orders with quaternion orders whose properties are less mysterious. The next proposition is the cornerstone of this strategy.

\begin{proposition} \label{good element in the quaternion algebra}
Let $j \in \overline{\mathbb{Q}}$ be a singular modulus of discriminant $\Delta$ and let $E_{/\mathbb{Q}(j)}$ be an elliptic curve with $j(E)=j$. Choose a degree $1$ prime $\mathfrak{p} \subseteq \mathbb{Q}(j)$ lying above a rational prime $p \in \mathbb{Z}$ not dividing $\Delta$ and suppose that $E$ has good supersingular reduction $\widetilde{E}$ modulo $\mathfrak{p}$. Denote by $\varphi \in \End_{\overline{\mathbb{F}}_p}(\widetilde{E})$ the Frobenius endomorphism $(x,y) \mapsto (x^p,y^p)$, where the coordinates $x,y$ come from the choice of a Weierstrass model for $E$. Then there exists a morphism $\psi \in \End_{\overline{\mathbb{F}}_p}(\widetilde{E})$ such that
\[
\psi^2+|\Delta| \psi+ \frac{\Delta^2 + |\Delta|}{4}=0 \hspace{0.5cm} \text{and} \hspace{0.5cm} \psi \circ \varphi = \varphi \circ \overline{\psi}
\]
where $\overline{\cdot}:\End_{\overline{\mathbb{F}}_p}(\widetilde{E}) \otimes_\mathbb{Z} \mathbb{Q} \to \End_{\overline{\mathbb{F}}_p}(\widetilde{E}) \otimes_\mathbb{Z} \mathbb{Q}$ denotes the standard involution. In fact, the morphism $\psi$ can be taken inside the image of the reduction map $\End_{\overline{\mathbb{Q}}}(E) \to \End_{\overline{\mathbb{F}}_p}(\widetilde{E})$ modulo any prime in $\overline{\mathbb{Q}}$ lying above $\mathfrak{p}$.
\end{proposition}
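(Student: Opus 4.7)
The plan is to construct $\psi$ by reducing a suitable CM endomorphism of a lift of $\widetilde{E}$. Let $H := K(j)$ be the ring class field of $\mathcal{O}_j$ (with $K = \mathbb{Q}(\sqrt{\Delta})$), over which all CM endomorphisms of $E_H := E \times_{\mathbb{Q}(j)} H$ are defined, and fix the normalized CM embedding $\iota : \mathcal{O}_j \hookrightarrow \End_H(E_H)$ (determined by its action on the tangent space $T_0 E_H$). Set $\Psi := \iota((\Delta+\sqrt{\Delta})/2) \in \End_H(E_H)$ and let $\psi \in \End_{\overline{\mathbb{F}}_p}(\widetilde{E})$ be its reduction modulo any prime $\mathfrak{P} \subseteq \mathcal{O}_H$ lying over $\mathfrak{p}$ (unramified in $H$ since $p \nmid \Delta$, so the good integral model of $E$ at $\mathfrak{p}$ extends to one at $\mathfrak{P}$). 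The polynomial identity in the first claim is then immediate: $(\Delta+\sqrt{\Delta})/2$ has minimal polynomial $X^2 - \Delta X + (\Delta^2-\Delta)/4$ over $\mathbb{Q}$, this persists under the ring homomorphism induced by reduction, and substituting $\Delta = -|\Delta|$ gives the stated form.

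For the second identity, the standard naturality $\varphi \circ \alpha = \alpha^{(p)} \circ \varphi$ of the $p$-power Frobenius on $\End_{\overline{\mathbb{F}}_p}(\widetilde{E})$ (where $\alpha^{(p)}$ denotes the image of $\alpha$ under the Frobenius action on the residue field) makes the claim equivalent to $\psi^{(p)} = \overline{\psi}$. Since $\widetilde{E}$ is supersingular and $p \nmid \Delta$, the rational prime $p$ is inert in $K/\mathbb{Q}$. Combined with the hypothesis that $\mathfrak{p}$ has residue degree $1$ over $p$, the multiplicativity of residue degrees in the tower $\mathbb{Q} \subseteq \mathbb{Q}(j) \subseteq H$ forces the decomposition group $D_\mathfrak{P} \subseteq \Gal(H/\mathbb{Q})$ to be contained in $\Gal(H/\mathbb{Q}(j))$ (so $\Frob_\mathfrak{P}$ fixes $E_H$) and to have order $2$, whence $\Frob_\mathfrak{P}|_K$ is complex conjugation and in particular $\Frob_\mathfrak{P}(\sqrt{\Delta}) = -\sqrt{\Delta}$.

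The standard CM-theoretic compatibility now yields the identity $\sigma(\iota(a)) = \iota(\sigma|_K(a))$ for every $\sigma \in \Gal(H/\mathbb{Q}(j))$ and every $a \in \mathcal{O}_j$: both sides induce the same scalar (namely $\theta(\sigma|_K(a))$, where $\theta : K \hookrightarrow H$ is the embedding governing the tangent action of $\iota$) on $T_0 E_H$, and in characteristic zero endomorphisms are determined by their tangent action. Applied to $\sigma = \Frob_\mathfrak{P}$, this gives $\Frob_\mathfrak{P}(\Psi) = \overline{\Psi}$. Reducing modulo $\mathfrak{P}$ converts the Galois action on $\End_H(E_H)$ into the Frobenius twist on $\End(\widetilde{E})$, so $\psi^{(p)} = \overline{\psi}$, finishing the proof. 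The main technical point is precisely this CM-theoretic compatibility statement; it is the reason we pass to the ring class field in the first place, and it is the precise sense in which supersingular reduction implements complex conjugation of CM endomorphisms via Frobenius.
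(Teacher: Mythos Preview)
Your argument is correct and follows essentially the same route as the paper: pass to the ring class field $H=K(j)$, take $\psi$ to be the reduction of the normalized CM endomorphism corresponding to $(\Delta+\sqrt\Delta)/2$, identify $\Frob_{\mathfrak P}$ with the nontrivial element of $\Gal(H/\mathbb{Q}(j))$ (using that $p$ is inert in $K$ and $\mathfrak p$ has degree $1$), invoke the CM compatibility $\sigma(\iota(a))=\iota(\sigma|_K(a))$, and conclude via $\varphi\circ\alpha=\alpha^{(p)}\circ\varphi$. The only cosmetic difference is that the paper carries out the last step in explicit affine coordinates rather than via the abstract Frobenius-naturality relation.
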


\begin{remark}
Recall that the standard involution on the quaternion algebra $\End_{\overline{\mathbb{F}}_p}(\widetilde{E}) \otimes_\mathbb{Z} \mathbb{Q}$ corresponds to taking the dual isogeny when restricted to $\End_{\overline{\mathbb{F}}_p}(\widetilde{E})$. This essentially follows from the uniqueness of the standard involution on quaternion algebras, see \cite[Corollary 3.4.4]{Voight_book_2020}.
\end{remark}

\begin{proof}
 In this proof, we fix for convenience an embedding $\overline{\mathbb{Q}} \hookrightarrow \mathbb{C}$. Let $\mathcal{O}$ be the order of discriminant $\Delta$ and $K \subseteq \overline{\mathbb{Q}}$ be its field of fractions.
For an element $\beta \in K$, we denote by $\overline{\beta}$ its conjugate through the unique non-trivial automorphism of $K/\mathbb{Q}$. This will not cause confusion with the standard involution on $\End_{\overline{\mathbb{F}}_p}(\widetilde{E}) \otimes_\mathbb{Z} \mathbb{Q}$, as we explain below. 
 
 By assumption, there exists an elliptic scheme $\mathcal{E}$ over the localization at $\mathfrak{p}$ of the ring of integers in $\mathbb{Q}(j)$ such that the generic fiber of $\mathcal{E}$ is isomorphic to $E$ while its special fiber is a supersingular elliptic curve $\widetilde{E}$ defined over $\mathbb{F}_p$. Set $H_\mathcal{O}:=K(j)$, which is a degree $2$ extension of $\mathbb{Q}(j)$, and fix a prime $\mathcal{P} \subseteq H_\mathcal{O}$ lying above $\mathfrak{p}$.
 Since $E$ has supersingular reduction modulo $\mathfrak{p}$, the latter has degree $1$ and $p$ is unramified in $K$, by \cite[Chapter 13, Theorem 12]{Lang_1987} we must have $f(\mathcal{P}/\mathfrak{p})=2$, where $f(\mathcal{P}/\mathfrak{p})$ denotes the inertia degree of $\mathcal{P}$ over $\mathfrak{p}$. In particular, we see that the decomposition group of $\mathcal{P}$ over $\mathfrak{p}$ is precisely $\Gal(H_\mathcal{O}/\mathbb{Q}(j))$. We fix $\sigma \in \Gal(H_\mathcal{O}/\mathbb{Q}(j))$ to be the unique non-trivial element. Then $\sigma$ restricts to an automorphism of $R_\mathcal{P}$, the localization at $\mathcal{P}$ of the ring of integers of $H_\mathcal{O}$, inducing the Frobenius endomorphism $\tau: x \mapsto x^p$ on the residue field.
 
  With abuse of notation, we denote again by $\mathcal{E}$ the base-change $\mathcal{E}_{R_\mathcal{P}}$ and by $\widetilde{E}$ the special fiber of $\mathcal{E}_{R_\mathcal{P}}$ (which is isomorphic to the base-change of the special fiber of $\mathcal{E}$ to the residue field of $R_\mathcal{P}$).
  It follows from the Néron mapping property \cite[Proposition 1.4/4]{Bosch_Neron_models_book_1990} that every endomorphism $\lambda \in \End_{H_\mathcal{O}}(E)$ induces an endomorphism $\lambda_\mathcal{E}$ of $\mathcal{E}$. Define $\lambda \text{ mod } \mathcal{P}$ to be the restriction of $\lambda_\mathcal{E}$ to $\widetilde{E}$. The Galois group $\Gal(H_\mathcal{O}/\mathbb{Q}(j))$ acts on $R_\mathcal{P}$ and this in turn induces a Galois action on $\End_{R_\mathcal{P}}(\mathcal{E})$. In the same way, there is an action of $\Gal(\overline{\mathbb{F}}_p/\mathbb{F}_p)$ on $\End_{\overline{\mathbb{F}}_p}(\widetilde{E})$. These two actions are compatible, in the sense that for every $\lambda \in \End_{H_\mathcal{O}}(E)$ we have
 \begin{equation} \label{eq:compatibility_Galois_actions}
     \sigma(\lambda_\mathcal{E}) \text{ mod } \mathcal{P} = \tau (\lambda \text{ mod } \mathcal{P}),
 \end{equation}
 as one can see using the various functorial properties of fibered products.
 In what follows, we will often omit the subscript $\mathcal{E}$ when dealing with endomorphism of $\mathcal{E}$ induced by elements in $\End_{H_\mathcal{O}}(E)$. This allows us to ease a  bit the notation, since usually elements of $\End_{H_\mathcal{O}}(E)$ will already come equipped with their own subscript.
 
We now fix a normalized isomorphism
 \[
 [\cdot]_E: \mathcal{O} \xrightarrow{\sim} \End_{\overline{\mathbb{Q}}}(E) 
 \]
 following \cite[II, Proposition 1.1]{Silverman_book_1994}. 
 Let $\alpha:= \frac{\Delta+\sqrt{\Delta}}{2} \in \mathcal{O}$ and note that $[\alpha]_E \in \End_{H_\mathcal{O}}(E)$ because, by \cite[Chapter II, Proposition 30]{Shimura_book_1998}, all the endomorphisms of $E$ are defined over $H_\mathcal{O}$. Since $\alpha^2+|\Delta| \alpha+ \frac{\Delta^2 + |\Delta|}{4}=0$, also $[\alpha]_E$ satisfies the same relation. One also has
 \begin{equation} \label{eq:sigma_of_morphism}
     \sigma([\alpha]_E)=[\sigma(\alpha)]_{E^{\sigma}}=[\overline{\alpha}]_E
 \end{equation}
 where the first equality follows from \cite[II, Theorem 2.2 (a)]{Silverman_book_1994} and in the second equality we are using the fact that $E$ is defined over $\mathbb{Q}(j)$ and $\sigma$ is non-trivial.
 
 Let now $\psi:=\left( [\alpha]_E \text{ mod } \mathcal{P} \right) \in \End_{\overline{\mathbb{F}}_p}(\widetilde{E})$. For $\beta \in \mathcal{O}$, the association $[\beta]_E \mapsto [\overline{\beta}]_E$ defines a standard involution on $\End_{H_\mathcal{O}}(E)$, in the sense of \cite[Definition 3.2.4]{Voight_book_2020}. Since reduction mod $\mathcal{P}$ defines an embedding of $\End_{H_\mathcal{O}}(E) \hookrightarrow \End_{\overline{\mathbb{F}}_p}(\widetilde{E})$, by the uniqueness of the standard involution on quadratic $\mathbb{Q}$-algebras (see \cite[Lemma 3.4.2]{Voight_book_2020}) we have $[\overline{\alpha}]_E \text{ mod } \mathcal{P}= \overline{\psi}$, where now the conjugation above $\psi$ denotes the usual standard involution on the quaternion algebra $\End_{\overline{\mathbb{F}}_p}(\widetilde{E}) \otimes_\mathbb{Z} \mathbb{Q}$. We have
 \[
 \overline{\psi} = [\overline{\alpha}]_E \text{ mod } \mathcal{P} = \sigma([\alpha]_E) \text{ mod } \mathcal{P} = \tau ( [\alpha]_E \text{ mod } \mathcal{P} ) = \tau (\psi)
 \]
 where we have applied equalities \eqref{eq:compatibility_Galois_actions} and \eqref{eq:sigma_of_morphism}. This yields
 \[
 \varphi \circ \overline{\psi} = \varphi \circ \tau (\psi) = \tau (\tau (\psi)) \circ \varphi = \psi \circ \varphi
 \]
 and here we have used the facts that for every $\lambda \in \End_{\overline{\mathbb{F}}_p}(\widetilde{E})$ one has $\varphi \circ \lambda = \tau(\lambda) \circ \varphi$, as can be checked using local coordinates for $\widetilde{E}$, and that $\tau(\tau(\psi))=\psi$ because $\psi$ is defined over a quadratic extension of $\mathbb{F}_p$. The proof is concluded.
\end{proof}

We are now ready to begin the proof of Theorem \ref{Theorem valuation j-j0}.
Let us fix the notation that will be in force during the entire argument. Given the orders $\mathcal{O}_j=\mathbb{Z} \left[ \frac{\Delta+\sqrt{\Delta}}{2} \right]$ and $\mathcal{O}_{j_0}=\mathbb{Z} \left[ \frac{\Delta_0+\sqrt{\Delta_0}}{2} \right]$ as in the statement of Theorem \ref{Theorem valuation j-j0}, we denote by $K_j$ and $K_{j_0}$ the corresponding imaginary quadratic fields containing them. We then set $H_j$ and $H_{j_0}$ to be the ring class fields of $K_j$ and $K_{j_0}$ relative to the orders $\mathcal{O}_j$ and $\mathcal{O}_{j_0}$ respectively. Using this notation, the field $H$ in the statement of Theorem \ref{Theorem valuation j-j0} is the compositum in $\overline{\mathbb{Q}}$ of $H_j$ and $H_{j_0}$.

\subsection{First case: $\ell$ does not divide $\Delta$ and $\mathcal{O}_{j_0} \not \subseteq \mathcal{O}_j$.}

Assume that $E_0$ in the statement of the theorem is given by an integral model over the ring of integers of $\mathbb{Q}(j_0)$ with good reduction at $\mu \cap \mathbb{Q}(j_0)$. Let $(E_0)_{/H}$ be the base-change to $H$ of the elliptic curve $(E_0)_{/\mathbb{Q}(j_0)}$, and let $(E_j)_{/H}$ be an elliptic curve with $j(E_j)=j$ and with good reduction at all prime ideals above $\ell$. Such an elliptic curve $E_j$ exists by \cite[Theorems 8 and 9]{Serre_Tate_1968}, which we can apply since $\ell \nmid \Delta$ by assumption. In particular, $E_j$ will have good reduction at the prime $\mu$. We will always identify $\mathcal{O}_j$ and $\mathcal{O}_{j_0}$ with the endomorphism rings of $E_j$ and $E_0$ respectively.

Let $H_\mu$ be the completion of $H$ at the prime $\mu$. The extension $\mathbb{Q} \subseteq H$ is unramified at $\ell$ because $\ell \nmid \Delta \Delta_0$ (see \cite[Chapter 9, Section A]{Cox_book_2013}), hence $H_\mu$ is contained in $\widehat{\mathbb{Q}^{\text{unr}}_\ell}$, the completion of the maximal unramified extension of $\mathbb{Q}_\ell$. Denote by  $W$ the ring of integers in $\widehat{\mathbb{Q}^{\text{unr}}_\ell}$ and let $\pi \in W$ be a uniformizer. By abuse of notation, we also use $E_0, E_j$ to denote the elliptic schemes over $W$ with generic fibers isomorphic to the base-changes of $E_0, E_j$ to $\widehat{\mathbb{Q}^{\text{unr}}_\ell}$ respectively. Note that, by our choices, $E_0 \text{ mod } \pi$ is defined over $\mathbb{F}_\ell$.

\begin{lemma} \label{Precise valuation}
In the notation above, we have 
\[
v_\mu(j-j_0) \leq \frac{d_0}{2} \cdot \max \{n \in \mathbb{N}_{\geq 1}: \Iso_{W/\pi^n} (E_j, E_0) \neq \emptyset \}
\]
where, for every $n \in \mathbb{Z}_{\geq 1}$, we denote by $\Iso_{W/\pi^n} (E_j, E_0)$ the set of isomorphisms between $E_j \text{ mod } \pi^n$ and $E_0 \mod \pi^n$. 
\end{lemma}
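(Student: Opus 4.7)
The strategy is to translate both sides of the inequality into data on the formal universal deformation space of the common supersingular reduction of $E_j$ and $E_0$, so that the lemma reduces to a single ramification computation. I first assume $v_\mu(j-j_0)\geq 1$, since the bound is trivial otherwise. The reductions of $E_0$ and $E_j$ modulo $\mu$ then share the same $j$-invariant in $\overline{\F}_\ell$ and are therefore isomorphic; fixing such an isomorphism identifies both $(E_j)_W$ and $(E_0)_W$ with deformations of a single supersingular curve $\bar E_0$ over $\overline{\F}_\ell$. Since the deformation functor of $\bar E_0$ is pro-represented by the power series ring $W[[t]]$, these two deformations correspond to values $t_0,t_j \in \pi W$, with $t_0=0$ in the canonical identification of $(E_0)_W$ with the trivial deformation of itself.

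The second ingredient is the structure of the $j$-invariant $J(t)\in W[[t]]$ of the universal family. The group $\Aut(\bar E_0) \cong \mu_{d_0}$ acts on the formal deformation space, with $\{\pm 1\}$ acting trivially. The effective action of the quotient $\mu_{d_0}/\{\pm 1\}\cong \mu_d$, where $d:=d_0/2$, is by multiplication of $t$ by $d$-th roots of unity, coming from the standard action of $\Aut(\bar E_0)$ on $\mathrm{Lie}(\bar E_0)$ via the inclusion $\mu_{d_0}\hookrightarrow \overline{\F}_\ell^\times$. Since $J(t)$ is invariant under this action and $J(0)=j_0$, one immediately obtains $J(t)-j_0 = u(t)\, t^d$ for some unit $u(t) \in W[[t]]^\times$.

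With these preparations the conclusion is immediate. On one hand, $(E_j)_W \cong (E_0)_W$ over $W/\pi^n$ if and only if $t_j$ lies in the $\mu_d$-orbit of $t_0=0$ modulo $\pi^n$; since this orbit is just $\{0\}$, the set $\Iso_{W/\pi^n}(E_j, E_0)$ is nonempty exactly when $v_\pi(t_j)\geq n$, giving $N=v_\pi(t_j)$. On the other hand, $v_\mu(j-j_0) = v_\pi(J(t_j)-j_0) = v_\pi(u(t_j)\, t_j^d) = d\cdot v_\pi(t_j) = (d_0/2)\, N$, where $v_\mu$ and $v_\pi$ agree because $\Q\subseteq H$ is unramified at $\ell$ by the hypothesis $\ell\nmid\Delta\Delta_0$. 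This in fact gives the lemma with equality.

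The main technical point is the identification of the effective $\Aut(\bar E_0)$-action on the formal parameter $t$, equivalently the precise ramification of the map from the moduli stack $M_{1,1}$ to the $j$-line $\mathbb{A}^1$ at a point with extra automorphisms (which occurs only when $\bar j_0 \in \{0,1728\}$). Once this is granted, the rest of the argument is essentially bookkeeping on the deformation space.
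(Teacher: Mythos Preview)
Your deformation-theoretic route is genuinely different from the paper's, which simply invokes the Gross--Zagier counting formula $v_\mu(j-j_0) = \tfrac{1}{2}\sum_{n\ge 1}\#\Iso_{W/\pi^n}(E_j,E_0)$ from \cite[Proposition~2.3]{Gross_Zagier_1985} and then bounds each nonzero summand by $\#\Aut_{W/\pi^n}(E_0) \le \#\Aut_{W/\pi}(E_0) = d_0$, giving the inequality directly and uniformly.

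There is, however, a genuine gap in your argument. You assert simultaneously that (i) $t_0 = 0$ and (ii) the $\mu_d$-action on the parameter $t$ is linear. These two claims are compatible only when $E_0/W$ is the fixed point of the $\Aut(\bar E_0)$-action on the deformation space, i.e.\ only when every automorphism of $\bar E_0$ lifts to $E_0/W$. This holds if $d_0 = 2$, or if $j_0 \in \{0, 1728\}$, but it fails whenever $j_0 \equiv 0$ or $1728 \pmod{\mu}$ with $j_0 \notin \{0,1728\}$: then $\bar E_0$ has extra automorphisms that do not lift, so $t_0 \ne 0$ in any linearizing coordinate, the identity $J(0) = j_0$ is false, and your claimed \emph{equality} $v_\mu(j-j_0) = (d_0/2)N$ can genuinely fail. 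The desired inequality can still be salvaged along your lines, via $v_\pi(t_j^d - t_0^d) = \sum_{\zeta\in\mu_d} v_\pi(t_j - \zeta t_0) \le d\cdot\max_\zeta v_\pi(t_j - \zeta t_0) = dN$, together with a check that the map $G$ with $J(t)=G(t^d)$ satisfies $G'(t_0^d)\in W^\times$; but you have not supplied either step. Since the hypotheses of Theorem~\ref{Theorem valuation j-j0} do not exclude the problematic case, the lemma as stated is strictly more general than your argument covers---though every application in the paper has either $d_0=2$ or $j_0\in\{0,1728\}$, where your proof does go through and indeed yields equality.
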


\begin{proof}
Notice first of all that the normalized valuation on $\widehat{\mathbb{Q}^{\text{unr}}_\ell}$, \textit{i.e} the valuation $v$ satisfying $v(\pi)=1$, extends the $\mu$-adic valuation $v_\mu$ on $H$ because $v_\mu(\ell)=1$. Since $W$ is a complete discrete valuation ring whose quotient field has characteristic $0$ and whose residue field $\overline{\mathbb{F}}_\ell$ is algebraically closed of characteristic $\ell >0$, we can apply \cite[Proposition 2.3]{Gross_Zagier_1985} which gives
\[
v_\mu(j-j_0)= \frac{1}{2} \sum_{n=1}^{\infty} \# \Iso_{W/\pi^n} (E_j, E_0).
\]
Now, certainly $\Iso_{W/\pi^{n+1}} (E_j, E_0) \neq \emptyset$ implies $\Iso_{W/\pi^{n}} (E_j, E_0) \neq \emptyset$ for every $n \in \mathbb{N}_{>0}$, since reductions of isomorphisms are isomorphisms. Moreover, whenever the set $\Iso_{W/\pi^n} (E_j, E_0)$ is non-empty, its cardinality equals the order of the automorphism group $\Aut_{W/\pi^n}(E_0)$ of $E_0 \text{ mod } \pi^n$. 
By \cite[Theorem 2.1 (2)]{Conrad_2004}, we always have the inclusions
\[
\End_W(E_0) \hookrightarrow \End_{W/\pi^n}(E_0) \hookrightarrow \End_{W/\pi}(E_0)
\]
induced respectively by the reduction modulo $\pi^n$ and modulo $\pi$. This means that 
\begin{equation} \label{inequalities automorphisms}
    \# \Aut_W(E_0) \leq \# \Aut_{W/\pi^n}(E_0) \leq \# \Aut_{W/\pi}(E_0)=d_0
\end{equation}
so, setting $M:=\max \{n \in \mathbb{N}_{\geq 1}: \Iso_{W/\pi^n} (E_j, E_0) \neq \emptyset \}$, we obtain
\[
v_\mu(j-j_0)= \frac{1}{2} \sum_{n=1}^{\infty} \# \Iso_{W/\pi^n} (E_j, E_0)=\frac{1}{2} \sum_{n=1}^{M} \# \Aut_{W/\pi^n}(E_0) \leq \frac{d_0}{2}\cdot M
\]
which proves the lemma. 

\end{proof}

By Lemma \ref{Precise valuation}, in order to estimate the valuation at $\mu$ of the difference $j-j_0$, we need to bound the biggest index $n$ such that the reductions modulo $\pi^n$ of the elliptic curves $E_j$ and $E_0$ are isomorphic. If this maximum is $0$, then the two elliptic curves are not even isomorphic over $\overline{\mathbb{F}}_\ell \cong W/\pi$, so the prime $\mu$ cannot divide $j-j_0$ and there is nothing to prove. Hence, from now on we suppose that $\mu$ divides $j-j_0$ so that $E_0 \text{ mod } \pi \cong E_j \text{ mod } \pi$ over $\overline{\mathbb{F}}_\ell$. Since $\ell$ does not divide the conductors of the orders $\mathcal{O}_j$ and $\mathcal{O}_{j_0}$ by assumption, and the two orders are different, \cite[Chapter 13, Theorem 12]{Lang_1987} ensures that $\ell$ is a prime of supersingular reduction for both $E_j$ and $E_{0}$. In particular, the ring $R:=\End_{W/\pi}(E_0)$ is isomorphic to a maximal order in $\mathbb{B}_{\ell, \infty}\cong R \otimes_\mathbb{Z} \mathbb{Q}$.

Suppose now that $\Iso_{W/\pi^{n+1}}(E_j,E_0)$ is non-empty. Our goal is to find a bound on the exponent $n+1$. A choice of $f \in \Iso_{W/\pi^{n+1}}(E_j,E_0)$ induces an isomorphism
\[
\widetilde{f}: \End_{W/\pi^{n+1}}(E_j) \to \End_{W/\pi^{n+1}}(E_0), \hspace{0.5cm} \alpha \mapsto f \circ \alpha \circ f^{-1}
\] 
which, precomposed with the reduction map $\mathcal{O}_j \hookrightarrow \End_{W/\pi^{n+1}}(E_j)$, gives rise to an optimal embedding
\begin{equation} \label{injection}
    \psi_{n+1}: \mathcal{O}_j \hookrightarrow \End_{W/\pi^{n+1}}(E_0)
\end{equation}
by Theorem \ref{deformation} (b). For growing $n$, Theorem \ref{deformation} (a) shows that the endomorphism ring of $E_0 \text{ mod } \pi^{n+1}$ becomes more and more "$\ell$-adically close" to the order $\mathcal{O}_{j_0}$. Intuitively, this must imply that having an embedding as in \eqref{injection} should not be possible for $n$ large enough, yielding the desired bound on $n+1$. This intuition is correct, as we show below. The main obstacle to making this idea precise is that, as we already said, it is not easy to explicitly compute the endomorphism rings $\End_{W/\pi^{n+1}}(E_0)$ for a generic elliptic curve ${E_0}_{/W}$. To circumvent this problem, we "approximate" the rings $\End_{W/\pi^{n+1}}(E_0)$ with smaller orders where we are able to perform the relevant computations. The hypotheses on the prime $\mu$ and on the elliptic curve $E_0$ will make this strategy successful. 

Recall that $\mathcal{O}_{j_0}=\mathbb{Z} \left[ \frac{\Delta_0 + \sqrt{\Delta_0}}{2} \right]$ and let $\psi \in R$ be the image of $\frac{\Delta_0 + \sqrt{\Delta_0}}{2}$ via the reduction map modulo $\pi$. Denote also by $\varphi \in \End_{W/\pi}(E_0)$ the Frobenius endomorphism $(x,y) \mapsto (x^\ell, y^\ell)$. By Proposition \ref{good element in the quaternion algebra} and using the fact that $E_0 \text{ mod } \pi$ is a supersingular elliptic curve defined over $\mathbb{F}_\ell$, we have
\begin{equation} \label{relations in the order}
    \varphi^2+\ell=0, \hspace{0.5cm} \psi^2+|\Delta_0| \psi+ \frac{\Delta_0^2 + |\Delta_0|}{4}=0 \hspace{0.2cm} \text{and} \hspace{0.2cm} \psi \circ \varphi = \varphi \circ \overline{\psi}
\end{equation}
where $\overline{\cdot}$ denotes the standard involution on $\End_{W/\pi}(E_0) \otimes_\mathbb{Z} \mathbb{Q}$.
Hence, the ring $\widetilde{R}:=\mathbb{Z}[ \psi, \varphi] \subseteq R$ is a rank-$4$ order inside $\mathbb{B}_{\ell, \infty}$ with basis $\mathcal{B}=\{1, \psi, \varphi, \psi \varphi \}$ satisfying the relations \eqref{relations in the order}. Notice that the reduction map $\mathcal{O}_{j_0} \hookrightarrow R$ identifies $\mathcal{O}_{j_0}$ with the subring $\mathbb{Z}[\psi] \subseteq \mathbb{Z}[\psi, \varphi]$. The matrix of the bilinear pairing $\langle \alpha, \beta \rangle=\trd(\alpha \overline{\beta})$ computed on the basis $\mathcal{B}$ is given by
\[
A=\begin{pmatrix}
2 & \Delta_0 & 0 & 0 \\
\Delta_0 & \frac{\Delta_0^2 + |\Delta_0|}{2} & 0 & 0 \\
0 & 0 & 2\ell & \Delta_0 \ell \\
0 & 0 & \Delta_0 \ell & \frac{\Delta_0^2 + |\Delta_0|}{2} \ell \end{pmatrix}
\]

so the discriminant of the order $\widetilde{R}$ equals $\det A=\Delta_0^2 \ell^2$ (see \cite[Definition 15.2.2 and Exercise 13 in Chapter 15]{Voight_book_2020}). Hence, by \cite[Lemma 15.2.15, Lemma 15.4.7 and Theorem 15.5.5]{Voight_book_2020} $\widetilde{R}$ has index $|\Delta_0|$ inside any maximal order containing it, so in particular $|R:\widetilde{R}|=|\Delta_0|$. Now, since we are in the hypotheses of Theorem \ref{deformation} (a), we have
\[
\End_{W/\pi^{n+1}}(E_0) \cong \mathbb{Z}[\psi] + \ell^n R \supseteq \mathbb{Z}[\psi] + \ell^n \widetilde{R}
\]
and we shall show that the index of the latter inclusion is also bounded by $|\Delta_0|$.

\begin{lemma} \label{lemma index}
For all $n\in \mathbb{N}$ the index $\left| (\mathbb{Z}[\psi] + \ell^n R) : (\mathbb{Z}[\psi] + \ell^n \widetilde{R}) \right|$ divides $|\Delta_0|$.
\end{lemma}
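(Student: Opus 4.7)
The plan is to produce an explicit surjection from $R/\widetilde{R}$ onto $(\mathbb{Z}[\psi] + \ell^n R)/(\mathbb{Z}[\psi] + \ell^n \widetilde{R})$. This would suffice: we have already noted that $\widetilde{R}$ has discriminant $\Delta_0^2 \ell^2$ while any maximal order in $\mathbb{B}_{\ell,\infty}$ containing it has reduced discriminant $\ell^2$, so $|R:\widetilde{R}| = |\Delta_0|$, and the existence of such a surjection then forces the target to have order dividing $|\Delta_0|$.

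Concretely, I would consider the $\mathbb{Z}$-linear map
\[
\Phi \colon R \longrightarrow (\mathbb{Z}[\psi] + \ell^n R)/(\mathbb{Z}[\psi] + \ell^n \widetilde{R}), \qquad r \longmapsto \ell^n r \bmod (\mathbb{Z}[\psi] + \ell^n \widetilde{R}).
\]
Surjectivity is immediate: any element of $\mathbb{Z}[\psi] + \ell^n R$ can be written as $p + \ell^n r$ with $p \in \mathbb{Z}[\psi]$ and $r \in R$, and reduces to $\ell^n r = \Phi(r)$ in the quotient. Moreover $\ell^n \widetilde{R}$ is contained in $\mathbb{Z}[\psi] + \ell^n \widetilde{R}$, so $\widetilde{R} \subseteq \ker \Phi$, and $\Phi$ therefore factors through a surjection $\overline{\Phi} \colon R/\widetilde{R} \twoheadrightarrow (\mathbb{Z}[\psi] + \ell^n R)/(\mathbb{Z}[\psi] + \ell^n \widetilde{R})$. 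Hence the index on the left divides $|R:\widetilde{R}| = |\Delta_0|$, as claimed.

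There is no serious obstacle here; the only thing one needs to verify carefully is that $\mathbb{Z}[\psi] \subseteq \widetilde{R} \subseteq R$ (which is built into the definitions) so that the two sums make sense and are nested, and that all the relevant quotients are finite (which follows because $\mathbb{Z}[\psi] + \ell^n \widetilde{R}$ and $\mathbb{Z}[\psi] + \ell^n R$ are both full $\mathbb{Z}$-lattices in $\mathbb{B}_{\ell,\infty}$ of rank $4$). The whole argument is a clean application of the third isomorphism theorem for abelian groups, with multiplication by $\ell^n$ used to reduce the problem to the already-computed index $|R:\widetilde{R}|$.
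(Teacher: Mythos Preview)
Your proof is correct and is essentially the same argument as the paper's: both produce a surjection from $R/\widetilde{R}$ onto the quotient in question, using multiplication by $\ell^n$ and the fact that $|R:\widetilde{R}|=|\Delta_0|$ was already computed. The paper phrases it via the second isomorphism theorem (writing the quotient as $\ell^n R / ((\mathbb{Z}[\psi]+\ell^n\widetilde{R})\cap \ell^n R)$ and then surjecting from $\ell^n R/\ell^n\widetilde{R}\cong R/\widetilde{R}$), while you package the same composition directly as the map $r\mapsto \ell^n r$; these are the same argument in slightly different dress. One minor terminological slip: the reduced discriminant of a maximal order in $\mathbb{B}_{\ell,\infty}$ is $\ell$, not $\ell^2$ (it is the full discriminant that equals $\ell^2$), but this does not affect the argument since the index $|R:\widetilde{R}|=|\Delta_0|$ is already established in the paper prior to the lemma.
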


\begin{proof}
 Since $\widetilde{R} \subseteq R$, we have $\mathbb{Z}[\psi] + \ell^n R = \mathbb{Z}[\psi] + \ell^n R + \ell^n \widetilde{R}$. Hence 
 \[
 \frac{\mathbb{Z}[\psi] + \ell^n R}{\mathbb{Z}[\psi] + \ell^n \widetilde{R}} = \frac{\mathbb{Z}[\psi] + \ell^n R + \ell^n \widetilde{R}}{\mathbb{Z}[\psi] + \ell^n \widetilde{R}} \cong \frac{\ell^n R}{(\mathbb{Z}[\psi] + \ell^n \widetilde{R}) \cap \ell^n R}
 \]
 as abelian groups. Now, the containment $\ell^n \widetilde{R} \subseteq (\mathbb{Z}[\psi] + \ell^n \widetilde{R}) \cap \ell^n R$ gives an epimorphism
 \[
 \frac{\ell^n R}{\ell^n \widetilde{R}} \twoheadrightarrow \frac{\ell^n R}{(\mathbb{Z}[\psi] + \ell^n \widetilde{R}) \cap \ell^n R}.
 \]
 and, since $R$ is non-torsion, we have $\ell^n R / \ell^n \widetilde{R} \cong R/\widetilde{R}$. Since the latter has cardinality $|\Delta_0|$, the lemma is proved.
 
\end{proof}

\begin{corollary} \label{embedding in a smaller order}
The embedding \eqref{injection} induces an injection
\begin{equation} \label{equation embedding in a smaller order}
    \mathcal{O}_{j,|\Delta_0|}:=\mathbb{Z} \left[  \frac{\Delta_0^2 \Delta+\sqrt{\Delta_0^2 \Delta}}{2} \right] \hookrightarrow \mathbb{Z}[\psi] + \ell^n \widetilde{R}.
\end{equation}
\end{corollary}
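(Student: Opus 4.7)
The plan is to rewrite $\mathcal{O}_{j,|\Delta_0|}$ in a form that interacts well with Lemma \ref{lemma index} and then apply $\psi_{n+1}$ directly. First I would observe that, as a $\mathbb{Z}$-subalgebra of $K_j$,
\[
\mathcal{O}_{j,|\Delta_0|}=\mathbb{Z}+|\Delta_0|\mathcal{O}_j.
\]
Indeed, this is the unique suborder of $\mathcal{O}_j$ of index $|\Delta_0|$ (its discriminant is $\Delta_0^2\Delta$, and both orders live in the same field $K_j$), and one can check it directly by noting that $\frac{|\Delta_0|(\Delta+\sqrt{\Delta})}{2}$ and $\frac{\Delta_0^2\Delta+|\Delta_0|\sqrt{\Delta}}{2}$ differ by the integer $\frac{|\Delta_0|\Delta(1-|\Delta_0|)}{2}$ (integrality following from the fact that $|\Delta_0|(|\Delta_0|-1)$ is always even).

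Next I would extract from Lemma \ref{lemma index} that the finite abelian group $(\mathbb{Z}[\psi]+\ell^nR)/(\mathbb{Z}[\psi]+\ell^n\widetilde{R})$ has order dividing $|\Delta_0|$ and is therefore annihilated by multiplication by $|\Delta_0|$. In other words,
\[
|\Delta_0|\,(\mathbb{Z}[\psi]+\ell^nR)\subseteq \mathbb{Z}[\psi]+\ell^n\widetilde{R}.
\]
Applying the embedding $\psi_{n+1}$ of \eqref{injection} to the presentation $\mathcal{O}_{j,|\Delta_0|}=\mathbb{Z}+|\Delta_0|\mathcal{O}_j$, and using that $\psi_{n+1}$ is a ring homomorphism, gives
\[
\psi_{n+1}\bigl(\mathcal{O}_{j,|\Delta_0|}\bigr)=\mathbb{Z}+|\Delta_0|\,\psi_{n+1}(\mathcal{O}_j)\subseteq \mathbb{Z}+|\Delta_0|\,(\mathbb{Z}[\psi]+\ell^nR)\subseteq \mathbb{Z}[\psi]+\ell^n\widetilde{R},
\]
where at the end I use that $\mathbb{Z}\subseteq \mathbb{Z}[\psi]+\ell^n\widetilde{R}$ trivially.

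Since $\psi_{n+1}$ is injective, its restriction to $\mathcal{O}_{j,|\Delta_0|}$ lands in $\mathbb{Z}[\psi]+\ell^n\widetilde{R}$ and is still injective, producing the desired embedding \eqref{equation embedding in a smaller order}. There is no real technical obstacle here: the argument is a direct combination of a standard computation with suborders of imaginary quadratic orders and the annihilation consequence of Lemma \ref{lemma index}. The only point requiring attention is the identification $\mathcal{O}_{j,|\Delta_0|}=\mathbb{Z}+|\Delta_0|\mathcal{O}_j$, which is routine once one factors the discriminants through the fundamental discriminant of $K_j$.
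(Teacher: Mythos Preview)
Your proposal is correct and follows essentially the same approach as the paper: the paper's proof simply notes that Lemma~\ref{lemma index} gives $|\Delta_0|\,x\in\mathbb{Z}[\psi]+\ell^n\widetilde{R}$ for every $x\in\mathbb{Z}[\psi]+\ell^nR$, and then uses that $\mathcal{O}_{j,|\Delta_0|}$ has index $|\Delta_0|$ in $\mathcal{O}_j$. You have just made explicit the identification $\mathcal{O}_{j,|\Delta_0|}=\mathbb{Z}+|\Delta_0|\mathcal{O}_j$ that the paper leaves implicit.
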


\begin{proof}
 By Lemma \ref{lemma index}, for every $x \in \mathbb{Z}[\psi] + \ell^n R$ we have $|\Delta_0| x \in \mathbb{Z}[\psi] + \ell^n \widetilde{R}$. Since $\mathcal{O}_{j,|\Delta_0|}=\mathbb{Z}+|\Delta_0| \mathcal{O}_j$, the corollary follows.
\end{proof}

Combining Corollary \ref{embedding in a smaller order} with Lemma \ref{Gross Lattice}, we see that $|\disc(\mathcal{O}_{j,|\Delta_0|})|=\Delta_0^2 |\Delta|$ must be represented by the Gross lattice $\Lambda_{\ell,n}$ of the order $\mathbb{Z}[\psi] + \ell^n \widetilde{R}$. Note that this representation is not necessarily primitive, because the embedding \eqref{equation embedding in a smaller order} is not necessarily optimal. A computation shows that 
\[
\Lambda_{\ell,n} = \left \langle |\Delta_0| + 2\psi, 2\ell^n \varphi, 2 \ell^n \psi \varphi \right \rangle_{\mathbb{Z}}
\]
\textit{i.e.} $\mathcal{B}'=\{|\Delta_0| + 2\psi, 2\ell^n \varphi, 2 \ell^n \psi \varphi \}$ is a $\mathbb{Z}$-basis for the Gross lattice of $\mathbb{Z}[\psi] + \ell^n \widetilde{R}$. The reduced norm restricted to the lattice $\Lambda_{\ell,n}$ induces the ternary quadratic form
\begin{equation} \label{Quadratic form}
    Q_{\ell,n}(X,Y,Z)=|\Delta_0|X^2+ 4\ell^{2n+1} Y^2 + \ell^{2n+1}(\Delta_0^2 + |\Delta_0|) Z^2+ 4\ell^{2n+1}\Delta_0 YZ
\end{equation}
written with respect to the basis $\mathcal{B}'$. 

After setting 
\[
\widetilde{X}=X, \hspace{0.5cm} \widetilde{Y}=Y+\frac{1}{2}\Delta_0 Z, \hspace{0.5cm} \widetilde{Z}=Z 
\]
we get the diagonal quadratic form 
\[
\widetilde{Q}_{\ell,n} (\widetilde{X}, \widetilde{Y}, \widetilde{Z})=|\Delta_0|\widetilde{X}^2+4\ell^{2n+1} \widetilde{Y}^2+\ell^{2n+1} |\Delta_0| \widetilde{Z}^2.
\]
Suppose now that $Q_{\ell,n}(X,Y,Z)=\Delta_0^2|\Delta|$ has an integral solution $(x,y,z) \in \mathbb{Z}^3$ corresponding to the embedding  \eqref{equation embedding in a smaller order}. We first claim that at least one among $y$ and $z$ is non-zero. This follows from our assumptions on $\mathcal{O}_j$ and from the following proposition.

\begin{proposition}
If $y=z=0$ then $\mathcal{O}_{j_0} \subseteq \mathcal{O}_j$.
\end{proposition}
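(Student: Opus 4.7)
The plan is to unwind the meaning of $y=z=0$ for the underlying embedding $\psi_{n+1}\colon \mathcal{O}_j \hookrightarrow \End_{W/\pi^{n+1}}(E_0)$ and then exploit its optimality, as provided by Theorem \ref{deformation}(a). By construction, the element
\[
b \;=\; x(|\Delta_0| + 2\psi) + 2\ell^n y\,\varphi + 2\ell^n z\,\psi\varphi \;\in\; \Lambda_{\ell,n}
\]
is the image of $\sqrt{\Delta_0^2 \Delta}$ under the embedding $\mathcal{O}_{j,|\Delta_0|} \hookrightarrow \mathbb{Z}[\psi] + \ell^n \widetilde{R}$ from Corollary \ref{embedding in a smaller order}. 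Under the hypothesis $y=z=0$, this reduces to $b = x(|\Delta_0|+2\psi)$, which sits in the commutative subring $\mathbb{Z}[\psi]$.

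Next I would use the minimal polynomial $\psi^2 + |\Delta_0|\psi + (\Delta_0^2+|\Delta_0|)/4 = 0$ to verify that $(|\Delta_0|+2\psi)^2 = \Delta_0$; in other words $|\Delta_0|+2\psi$ is a square root of $\Delta_0$ inside the subfield $\mathbb{Q}(\psi) \subseteq \mathbb{B}_{\ell,\infty}$. Combining this with the previous step gives $\psi_{n+1}(\sqrt{\Delta}) = \tfrac{x}{|\Delta_0|}(|\Delta_0|+2\psi) \in \mathbb{Q}(\psi)$, so $\psi_{n+1}(K_j) \subseteq \mathbb{Q}(\psi)$. A dimension count forces equality $\psi_{n+1}(K_j) = \mathbb{Q}(\psi)$, and squaring the identity yields $\Delta/\Delta_0 = (x/\Delta_0)^2 \in \mathbb{Q}^{\times 2}$, so that the abstract imaginary quadratic fields $K_j$ and $K_{j_0}$ coincide; call this common field $K$.

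The optimality of $\psi_{n+1}$ from Theorem \ref{deformation}(a) would then give
\[
\psi_{n+1}(\mathcal{O}_j) \;=\; \psi_{n+1}(K_j) \cap (\mathbb{Z}[\psi] + \ell^n R) \;=\; \mathbb{Q}(\psi) \cap (\mathbb{Z}[\psi] + \ell^n R) \;\supseteq\; \mathbb{Z}[\psi],
\]
the last containment being trivial. Pulling back along $\psi_{n+1}^{-1}$, I obtain an order $\psi_{n+1}^{-1}(\mathbb{Z}[\psi]) \subseteq \mathcal{O}_j$ inside $K_j$, generated by an element with the same minimal polynomial as $\psi$, namely that of $\frac{\Delta_0+\sqrt{\Delta_0}}{2}$. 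Since $K$ admits a unique order of each discriminant, this pullback is exactly $\mathcal{O}_{j_0}$, yielding $\mathcal{O}_{j_0} \subseteq \mathcal{O}_j$ as desired.

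The part I expect to require the most care is the gluing step: the abstract imaginary quadratic fields $K_j$ and $K_{j_0}$ are identified through their common image $\mathbb{Q}(\psi) \subseteq \mathbb{B}_{\ell,\infty}$, and one must check that under this identification the pullback of $\mathbb{Z}[\psi]$ really produces the specific order $\mathcal{O}_{j_0}$ and not just some isomorphic copy. Once this identification is carried out with care, the optimality of $\psi_{n+1}$ in $\mathbb{Z}[\psi]+\ell^n R$ closes the argument with no additional assumption on $\ell$ or on the discriminants.
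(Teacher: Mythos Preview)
Your argument is correct and in fact takes a cleaner route than the paper's own proof. Both arguments begin the same way: from $y=z=0$ one reads off that the image of $\sqrt{\Delta_0^2\Delta}$ lands in $\mathbb{Z}[\psi]$, whence $\psi_{n+1}(K_j)=\mathbb{Q}(\psi)$ and $K_j=K_{j_0}$ as subfields of $\overline{\mathbb{Q}}$. At this point you invoke the optimality of $\psi_{n+1}$ into $\mathbb{Z}[\psi]+\ell^n R$ \emph{directly}: since $\psi_{n+1}(K_j)=\mathbb{Q}(\psi)$, optimality gives $\psi_{n+1}(\mathcal{O}_j)=\mathbb{Q}(\psi)\cap(\mathbb{Z}[\psi]+\ell^n R)\supseteq\mathbb{Z}[\psi]$, and pulling back yields $\mathcal{O}_{j_0}\subseteq\mathcal{O}_j$ in one stroke. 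The paper instead uses optimality only \emph{indirectly}, via a contradiction argument: it writes $x=d\cdot k$ with $d=\gcd(x,\Delta_0)$, analyses divisibility of conductors to show that $k\neq 1$ would force a strictly larger order than $\mathcal{O}_j$ to embed in $\mathbb{Z}[\psi]+\ell^n R$ (contradicting optimality), concludes $x\mid\Delta_0$, and then derives $|\Delta|=|\Delta_0|/m^2$ arithmetically. Your approach bypasses all of this conductor bookkeeping; the trade-off is that the paper's computation makes the index $[\mathcal{O}_j:\mathcal{O}_{j_0}]=m$ explicit, which you do not obtain (nor need).

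Your worry about the ``gluing step'' is unfounded: $K_j$ and $K_{j_0}$ are defined from the outset as subfields of the fixed algebraic closure $\overline{\mathbb{Q}}$, and once $\Delta/\Delta_0\in\mathbb{Q}^{\times 2}$ they literally coincide. The pullback $\psi_{n+1}^{-1}(\mathbb{Z}[\psi])$ is then an order of discriminant $\Delta_0$ in this common field, and imaginary quadratic orders are determined by their discriminant, so it equals $\mathcal{O}_{j_0}$ on the nose.
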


\begin{proof}
 Let $x \in \mathbb{Z}_{>0}$ be such that $Q_{\ell,n}(x,0,0)=\Delta_0^2|\Delta|$.
 By Remark \ref{rmk:bijection_optimal_embeddings}, this equality corresponds to the embedding
 \begin{equation} \label{embedding 1}
     \mathbb{Z} \left[ \frac{1}{2}\left(\Delta_0^2 \Delta + \sqrt{\Delta_0^2 \Delta} \right) \right] \hookrightarrow \mathbb{Z}[\psi] + \ell^n \widetilde{R}, \hspace{0.5cm} \frac{1}{2}\left(\Delta_0^2 \Delta + \sqrt{\Delta_0^2 \Delta} \right) \mapsto \frac{1}{2}\left(\Delta_0^2 \Delta + x(|\Delta_0|+2\psi)\right)
 \end{equation}
 of the order $\mathcal{O}_{j,|\Delta_0|}\subseteq K:=\mathbb{Q}(\sqrt{\Delta})$ into $\mathbb{Z}[\psi] + \ell^n \widetilde{R}$. The injection \eqref{embedding 1} is not optimal if $x \neq \pm 1$. Indeed, using the proof of Lemma \ref{Gross Lattice} we get the optimal embedding
 \begin{equation*} 
     \mathbb{Z} \left[ \frac{1}{2}\left(\frac{\Delta_0^2}{x^2} \Delta + \sqrt{\frac{\Delta_0^2}{x^2} \Delta} \right) \right] \hookrightarrow \mathbb{Z}[\psi] + \ell^n \widetilde{R}, \hspace{0.5cm} \frac{1}{2}\left(\frac{\Delta_0^2}{x^2} \Delta + \sqrt{\frac{\Delta_0^2}{x^2} \Delta} \right) \mapsto \frac{1}{2}\left(\frac{\Delta_0^2}{x^2} \Delta + (|\Delta_0|+2\psi)\right)
 \end{equation*}
 determined by the equality $Q_{\ell,n}(1,0,0)=(\Delta_0^2|\Delta|)/x^2$. Since $Q_{\ell,n}(1,0,0)=|\Delta_0|$, we see that the above injection is actually the same as
 \begin{equation} \label{embedding 2}
     \mathcal{O}_{j_0} = \mathbb{Z}\left[\frac{\Delta_0 + \sqrt{\Delta_0}}{2} \right] \hookrightarrow \mathbb{Z}[\psi] + \ell^n \widetilde{R},  \hspace{0.5cm} \frac{\Delta_0 + \sqrt{\Delta_0}}{2} \mapsto \psi.
 \end{equation}
Recall that we also have embedding \eqref{injection}, which can be rewritten as
 \begin{equation} \label{embedding 3}
      \mathcal{O}_j=\mathbb{Z}\left[ \frac{\Delta + \sqrt{\Delta}}{2}\right] \hookrightarrow \mathbb{Z}[\psi] + \ell^n R.
 \end{equation}
 We remind the reader that the above injection \eqref{embedding 3} is again optimal, and that \eqref{embedding 1} is originally induced by \eqref{embedding 3}. It is then clear that the injections \eqref{embedding 1}, \eqref{embedding 2} and \eqref{embedding 3} are all compatible between each other, meaning that, after tensoring with $\mathbb{Q}$, one gets the same map $\iota: K \hookrightarrow \mathbb{B}_{\ell, \infty}$. In particular, $\mathcal{O}_j$ and $\mathcal{O}_{j_0}$ are contained inside the same imaginary quadratic field $K=\mathbb{Q}(\sqrt{\Delta})=\mathbb{Q}(\sqrt{\Delta_0})$.
 
 Consider now the order $\mathcal{O}:=\mathcal{O}_{j} + \mathcal{O}_{j_0} \subseteq K$. We have that $\iota(\mathcal{O}) \subseteq \mathbb{Z}[\psi] + \ell^n R$, and from the optimality of \eqref{embedding 3} it follows that $\mathcal{O}=\mathcal{O}_j$. Hence $\mathcal{O}_{j_0} \subseteq \mathcal{O}_j$, and this concludes the proof.
\end{proof}

Since at least one among $y$ and $z$ is non-zero, we also have that at least one among $\widetilde{y}:=y+ (\Delta_0 z)/2$ and $\widetilde{z}=z$ is non-zero. Note that $\widetilde{y} \in \frac{1}{2} \mathbb{Z}$ and $\widetilde{z} \in \mathbb{Z}$. Then we have
\begin{align*}
    \Delta_0^2 |\Delta|=\widetilde{Q}_{\ell,n} (\widetilde{x}, \widetilde{y}, \widetilde{z})=|\Delta_0|\widetilde{x}^2+4\ell^{2n+1} \widetilde{y}^2+\ell^{2n+1} |\Delta_0| \widetilde{z}^2 \geq \max \{4\ell^{2n+1} \widetilde{y}^2, \ell^{2n+1} |\Delta_0| \widetilde{z}^2 \} \geq \ell^{2n+1}
\end{align*}
 which implies
 \begin{equation} \label{inequality on n}
     n+1 \leq \frac{\log(\Delta_0^2 |\Delta|)}{2 \log \ell} + \frac{1}{2}.
 \end{equation}
Combining now \eqref{inequality on n} with Lemma \ref{Precise valuation} concludes the first case of the proof of Theorem \ref{Theorem valuation j-j0}.

\subsection{Second case: $\ell$ divides $\Delta$.}

For this part of the proof, we are going to heavily rely on \cite{Lauter_Viray_2015}, of which we have kept the notation. We again assume that the elliptic curve $E_0$ is given by an integral model over the ring of integers of $\mathbb{Q}(j_0)$ that has good reduction at $\mu \cap \mathbb{Q}(j_0)$.

Suppose initially that $\ell$ divides the conductor of the order $\mathcal{O}_j$. Let $H_{j} \subseteq F$ be a minimal extension of the ring class field $H_{j}$ such that there exists an elliptic curve $(E_j)_{/F}$ with $j(E_j)=j$ and having good reduction at all primes of $F$ lying above $\ell$. Fix such an elliptic curve $E_j$ and base-change it to the compositum $L=F\cdot H_{j_0}$. Consider also a prime $\mu_L \subseteq L$ lying above $\mu \subseteq H$ and denote by $A$ the ring of integers in the completion of the maximal unramified extension of $L_{\mu_L}$, with maximal ideal $\mu_L A \subseteq A$. By abuse of notation, we denote by $E_0, E_j$ the elliptic schemes over $A$ with generic fibers isomorphic to the base-changes of $E_0, E_j$ to the completion of the maximal unramified extension of $L_{\mu_L}$. The elliptic schemes ${E_j}$ and $E_0$ have good reduction over $A$ and, since $A$ is a complete discrete valuation ring of characteristic $0$ with algebraically closed residue field of characteristic $\ell>0$, we can use the same proof of Lemma \ref{Precise valuation} to see that
\begin{equation} \label{inequality valuation}
    v_{\mu} (j-j_0) \leq v_{\mu_L}(j-j_0) \leq \frac{d_0}{2} \cdot \max \{n \in \mathbb{N}_{\geq 1}: \Iso_{A/\mu_L^n A} (E_j, E_0) \neq \emptyset \}.
\end{equation}

Since $\ell \nmid \Delta_0$, we can now apply \cite[Proposition 4.1]{Lauter_Viray_2015} with $E=E_0$, $\mathcal{O}_{d_1}=\mathcal{O}_{j_0}$ and $\mathcal{O}_{d_2}= \mathcal{O}_j$. This proposition, used together with the fact that $\ell$ divides the conductor of $\mathcal{O}_j$, implies that $\Iso_{A/\mu_L^n A} (E_j, E_0)=\emptyset$ if $n>1$. Combined with \eqref{inequality valuation}, this gives
\[
    v_{\mu} (j-j_0) \leq \frac{d_0}{2}
\]
as desired. This yields the theorem in the case that $\ell$ divides the conductor of $\mathcal{O}_j$.

Assume now that $\ell$ divides $\Delta$ but does not divide the conductor of the order $\mathcal{O}_j$. Then, if again $E_j$ is an elliptic curve with $j(E_j)=j$, we can choose $F=H_j$ as a field where $E_j$ has a model with good reduction at all primes dividing $\ell$. This follows from \cite[Theorem 9]{Serre_Tate_1968}. If we complete $H$ at $\mu$, and we take $A$ to be the ring of integers in the completion of the maximal unramified extension of $H_{\mu}$ and $W$ to be the ring of integers in the completion of the maximal unramified extension of $\mathbb{Q}_\ell$, then $\text{Frac}(W) \subseteq \Frac(A)$ is a ramified degree $2$ field extension because the ramification index $e(\mu/\ell) = 2$ by our assumptions. Again by \cite[Proposition 4.1]{Lauter_Viray_2015}, since we are assuming that $\ell$ does not divide the conductor of $\mathcal{O}_j$, for every $n\in \mathbb{N}_{>0}$ we have
\begin{equation} \label{LV 4.1}
    \# \Iso_{A/\mu^n A}(E_0,E_j) \leq C \cdot \# S_n^{\text{Lie}}(E_0/A)
\end{equation}
where $C=C(j) \leq 6$ is a positive constant depending on $j$ and $S_n^{\text{Lie}}(E_0/A)$ is the set of all endomorphisms $\varphi \in \End_{A/\mu^n A}(E_0)$ satisfying the following three conditions (cfr. \cite[pag. 9218]{Lauter_Viray_2015}):
\begin{enumerate}
    \item $\varphi^2-\Delta \varphi+\frac{1}{4}(\Delta^2-\Delta)=0$;
    \item The inclusion $\mathbb{Z}[\varphi] \hookrightarrow \End_{A/\mu A}(E_0)$ is optimal at all primes $p\neq \ell$. We recall that an embedding of $\mathbb{Z}$-modules $\mathcal{O} \hookrightarrow R$ is\textit{ optimal at a prime $p$} if the equality 
    \[
    (\iota (\mathcal{O}) \otimes_\mathbb{Z} \mathbb{Q}_p) \cap (R \otimes_\mathbb{Z} \mathbb{Z}_p) = \iota (\mathcal{O}) \otimes_\mathbb{Z} \mathbb{Z}_p
    \]
    holds (note that the corresponding \cite[Definition 2.1]{Lauter_Viray_2015} contains a misprint);
    \item As endomorphism of $\text{Lie}(E_0 \text{ mod } \mu^n A)$ we have $\varphi \equiv \delta \text{ mod } \mu^n$, where $\delta \in A$ is a fixed root of the polynomial $x^2-\Delta x+\frac{1}{4}(\Delta^2-\Delta)$.
\end{enumerate}

The set $S_n^{\text{Lie}}(E_0/A)$ can be partitioned as
\[
S_n^{\text{Lie}}(E_0/A)= \bigcup_{m \in \mathbb{N}} S_{n,m}^{\text{Lie}}(E_0/A)
\]
where $S_{n,m}^{\text{Lie}}(E_0/A)$ consists of all the endomorphisms $\varphi \in S_n^{\text{Lie}}(E_0/A)$ such that
\[
\disc \left(\mathcal{O}_{j_0}[\varphi]\right)=m^2.
\]
We first claim that, under our assumptions, the sets $S_{n,0}^{\text{Lie}}(E_0/A)$ are empty for all $n \in \mathbb{N}_{>0}$. Indeed, let $\varphi \in S_{n,0}^{\text{Lie}}(E_0/A)$ so that $\disc \left(\mathcal{O}_{j_0}[\varphi]\right)=0$. Since a division quaternion algebra does not contain suborders of rank $3$, this in particular implies that $\mathcal{O}_{j_0}[\varphi]$ has rank $2$ as $\mathbb{Z}$-module, so that $\mathbb{Z}[\varphi]$ is isomorphic to an order in $K_{j_0}$, not necessarily contained in $\mathcal{O}_{j_0}$. By the definition of $S_n^{\text{Lie}}(E_0/A)$, the order $\mathbb{Z}[\varphi]$ has discriminant $\Delta$, and we deduce that $\mathbb{Z}[\varphi] \cong \mathcal{O}_j \subseteq K_{j_0}$. However, by assumption $\ell$ divides $\Delta$ but does not divide the conductor of $\mathcal{O}_j$. Hence $\ell$ must divide the discriminant of $K_{j_0}$ which in turn implies $\ell \mid \Delta_0$, contradicting our hypotheses. This proves the claim.

On the other hand, in the second paragraph of \cite[pag. 9247]{Lauter_Viray_2015} it is proved that, when $\ell$ divides $\Delta$ but does not divide the conductor of $\mathcal{O}_{j}$, and $\ell \nmid \Delta_0$, then for every $m>0$ and $n>1$, the set $S_{n,m}^{\text{Lie}}(E/A)$ is empty. We deduce that $S_n^{\text{Lie}}(E/A)=\emptyset$ for all $n>1$, and combining this with inequality \eqref{LV 4.1} we obtain $\Iso_{A/\mu^n A}(E_0,E_j)=\emptyset$ for all $n>1$. Finally, using \cite[Proposition 2.3]{Gross_Zagier_1985} we obtain
\[
v_\mu(j-j_0)= \frac{1}{2} \# \Iso_{A/\mu A}(E_0,E_j) \leq \frac{d_0}{2}
\]
and this concludes the proof of Theorem \ref{Theorem valuation j-j0}.

\section{Proof of Theorem \ref{main theorem j-j0}} \label{sec: proof of main theorem j-j0}

The main scope of this section is to present the proof of Theorem \ref{main theorem j-j0}. At the end of this proof, we will point at the precise estimates that can be used to prove Theorem \ref{main theorem j+3375} and similar results, and we will provide a proof of the fact (stated in the introduction) that the extension $\mathbb{Q} \subseteq \mathbb{Q}(j_0)$ can be Galois for at most a finite number of singular moduli $j_0$. Before starting, let us recall some notation already used in the introduction. For a number field $K$ we denote by $\mathcal{M}_K$ the set of all places of $K$ and by $\mathcal{M}^{\infty}_K \subseteq \mathcal{M}_K$ the subset of all the infinite ones. For every $w \in \mathcal{M}_K \setminus \mathcal{M}^{\infty}_K$ we indicate by $|\cdot |_w$ the absolute value in the class of $w$ normalized as follows: if $\mathfrak{p}_w$ denotes the prime ideal corresponding to $w$ and $p_w$ is the rational prime lying below $\mathfrak{p}_w$, then 
\[
|x|_w=p_w^{-v_{\mathfrak{p}_w}(x)/e_w}
\]
for all $x \in K\setminus \{ 0 \}$, where $v_{\mathfrak{p}_w}(x)$ is the exponent with which the prime $\mathfrak{p}_w$ appears in the factorization of $x$, and $e_w$ is the ramification index of $\mathfrak{p}_w$ over $p_w$.

\begin{proof}[Proof of Theorem \ref{main theorem j-j0}]

Let $(j_0,S)$ be a nice $\Delta_0$-pair with $\Delta_0 < -4$ and $\#S \leq 2$. We can assume without loss of generality that $\#S=2$, since if $S$ contains fewer than two elements the statement of the theorem becomes weaker. Hence we can write $S=\{\ell_1, \ell_2 \}$ with $\ell_1, \ell_2 \in \mathbb{N}$ two distinct primes.

In order to prove Theorem \ref{main theorem j-j0}, we follow the strategy used in \cite{BHK_2018} to prove the emptiness of the set of singular units. Let  $j$ be a singular modulus of discriminant $\Delta$ such that $j-j_0$ is an $S$-unit, and let $h(\cdot)$ denote the logarithmic Weil height on algebraic numbers. By the usual properties of height functions \cite[Lemma 1.5.18]{Bombieri_Gubler_book_2006}, we have
\begin{equation} \label{Weil height}
   h(j-j_0)=h((j-j_0)^{-1})=\frac{1}{[\mathbb{Q}(j-j_0):\mathbb{Q}]} \sum_{v \in \mathcal{M}_{\mathbb{Q}(j-j_0)}} d_v \log^+ |(j-j_0)^{-1}|_v = A + N 
\end{equation}
where $d_v:=[\mathbb{Q}(j-j_0)_v:\mathbb{Q}_v]$ is the local degree of the field $\mathbb{Q}(j-j_0)$ at the place $v$ and
\begin{align*}
    &A:=\frac{1}{[\mathbb{Q}(j-j_0):\mathbb{Q}]} \sum_{v \in \mathcal{M}^{\infty}_{\mathbb{Q}(j-j_0)}} d_v \log^+ |(j-j_0)^{-1}|_v, \\ &N:=\frac{1}{[\mathbb{Q}(j-j_0):\mathbb{Q}]} \sum_{v \mid \ell_1 \ell_2 } d_v \log |(j-j_0)^{-1}|_v 
\end{align*}
are, respectively, the archimedean and non-archimedean components of the height. Notice that the expression for $N$ follows from our assumption on $j-j_0$ being an $S$-unit and from the fact that $j-j_0$ is an algebraic integer. We study these two components separately, starting with the archimedean one. From now on, we assume $|\Delta| > \max \{|\Delta_0|, 10^{15} \}$.

Denote by $C_0$ and $C_\Delta$ the class numbers of the orders  associated to $j_0$ and to $j$ respectively. Then by \cite[Corollary 4.2 (1)]{Cai_2020} we have
\begin{equation} \label{archimedean part estimate 1}
    A \leq \frac{8 F \log |\Delta| \cdot C_0}{[\mathbb{Q}(j-j_0): \mathbb{Q}]} + \log \left( \frac{F \log |\Delta| \cdot C_0 \cdot |\Delta|^{1/2}}{[\mathbb{Q}(j-j_0): \mathbb{Q}]} \right) + 4\log |\Delta_0| + 0.33
\end{equation}
where $F:= \max \{2^{\omega(a)}: a\leq |\Delta|^{1/2} \}$ and $\omega(n)$ denotes the number of prime divisors of an integer $n\in \mathbb{N}$. Using \cite[Theorem 4.1]{Faye_Riffaut_2018} we have
\[
    [\mathbb{Q}(j-j_0): \mathbb{Q}] = [\mathbb{Q}(j,j_0): \mathbb{Q}] \geq [\mathbb{Q}(j): \mathbb{Q}]=C_\Delta
\]
which, combined with \eqref{archimedean part estimate 1}, gives
\begin{equation} \label{archimedean part estimate 2}
    A \leq \frac{8 F \log |\Delta| \cdot C_0}{C_\Delta} + \log \left( \frac{F \log |\Delta| \cdot C_0 \cdot |\Delta|^{1/2}}{C_\Delta} \right) + 4\log |\Delta_0| + 0.33.
\end{equation}

As far as the non-archimedean part is concerned, we have

\begin{align} \label{non archimedean part estimate 1}
N&=\frac{1}{[\mathbb{Q}(j-j_0):\mathbb{Q}]} \sum_{v \mid \ell_1 \ell_2 } d_v \log |(j-j_0)^{-1}|_v  
= \frac{1}{[\mathbb{Q}(j-j_0):\mathbb{Q}]} \sum_{i\in \{1,2\}} \sum_{\mathfrak{p} \mid \ell_i} v_{\mathfrak{p}}(j-j_0) \log \ell_i^{f_{\mathfrak{p}}} \\
&=\frac{\log \ell_1}{[\Q(j-j_0):\Q]} \sum_{\mathfrak{p} \mid \ell_1} v_{\mathfrak{p}}(j-j_0) f_{\mathfrak{p}} + \frac{\log \ell_2}{[\Q(j-j_0):\Q]} \sum_{\mathfrak{p} \mid  \ell_2} v_{\mathfrak{p}}(j-j_0) f_{\mathfrak{p}} \nonumber 
\end{align} 
where $f_{\mathfrak{p}}$ denotes the residue degree of the prime $\mathfrak{p} \subseteq \Q(j-j_0)$ lying over $\mathfrak{p}\cap \mathbb{Q}$. For every $\mathfrak{p}\mid \ell_1 \ell_2$, we choose a prime ideal $\mu \subseteq H$ that divides $\mathfrak{p}$, where $H$ denotes the compositum inside $\overline{\mathbb{Q}}$ of the ring class fields relative to $j$ and $j_0$. Note that this makes sense, since we have $\mathbb{Q}(j-j_0)\subseteq \mathbb{Q}(j,j_0) \subseteq H$ (the first inclusion is actually an equality by \cite[Theorem 4.1]{Faye_Riffaut_2018}). We wish now to use Theorem \ref{Theorem valuation j-j0} to bound $v_\mu(j-j_0)$ for all these primes $\mu$. Let's check that the hypotheses of the theorem are verified in our context:
\begin{itemize}
    \item since we are assuming $|\Delta_0| < |\Delta|$, certainly we have $\Delta \neq \Delta_0$;
    \item since $(j_0,S)$ is a nice $\Delta_0$-pair, for $i \in \{1,2\}$ the prime $\ell_i$ splits completely in $\mathbb{Q}(j_0)$. In particular, $\mu \cap \mathbb{Q}(j_0)$ has residue degree $1$, as required;
    \item since $(j_0,S)$ is a nice $\Delta_0$-pair, for $i \in \{1,2\}$ the prime $\ell_i$ does not divide either $\Delta_0$ or $N_{\mathbb{Q}(j_0)/\mathbb{Q}}(j_0(j_0-1728))$. In particular, this last condition implies that the elliptic curve 
    \[
    {E_0}_{/\mathbb{Q}(j_0)}: y^2+xy=x^3-\frac{36}{j_0-1728} x - \frac{1}{j_0-1728}
    \]
    with $j(E_0)=j_0$, has good reduction at $\mu$.
\end{itemize}
This discussion shows that we can apply Theorem \ref{Theorem valuation j-j0} to bound $v_\mu(j-j_0)$. Notice that under our assumptions we have, in the notation of the theorem, that $d_0=2$ since $\ell_i \nmid N_{\mathbb{Q}(j_0)/\mathbb{Q}}(j_0(j_0-1728))$ for $i \in \{1,2\}$. Moreover, the imaginary quadratic order associated to $j$ cannot contain the order associated to $j_0$ because $|\Delta| >|\Delta_0|$. Thus we obtain
\[
    v_{\mathfrak{p}}(j-j_0) \leq v_{\mu}(j-j_0) \leq \max \left \{  \frac{\log(\Delta_0^2|\Delta|)}{2 \log \ell_i}+ \frac{1}{2}, 1 \right \} 
\]

for all primes $\mathfrak{p} \mid \ell_i$. Combining this with \eqref{non archimedean part estimate 1} and setting $L:=\max \{ \ell_1, \ell_2 \}$ we obtain
\begin{align} \label{non archimedean part estimate 2}
    N &\leq \frac{\log \ell_1}{[\Q(j-j_0):\Q]} \sum_{\mathfrak{p} \mid \ell_1} \max \left \{  \frac{\log(\Delta_0^2|\Delta|)}{2 \log \ell_1}+ \frac{1}{2}, 1 \right \} f_{\mathfrak{p}} + \frac{\log \ell_2}{[\Q(j-j_0):\Q]} \sum_{\mathfrak{p} \mid  \ell_2} \max \left \{  \frac{\log(\Delta_0^2|\Delta|)}{2 \log \ell_2}+ \frac{1}{2}, 1 \right \} f_{\mathfrak{p}} \\
    &\leq (\log \ell_1) \max \left \{  \frac{\log(\Delta_0^2|\Delta|)}{2 \log \ell_1}+ \frac{1}{2}, 1 \right \} + (\log \ell_2) \max \left \{  \frac{\log(\Delta_0^2|\Delta|)}{2 \log \ell_2}+ \frac{1}{2}, 1 \right \}  \nonumber \\
    &= \max \left \{  \frac{\log(\Delta_0^2|\Delta|)}{2}+ \frac{\log \ell_1}{2}, \log \ell_1 \right \} + \max \left \{  \frac{\log(\Delta_0^2|\Delta|)}{2}+ \frac{\log \ell_2}{2}, \log \ell_2 \right \} \nonumber \\ 
    &\leq 2 \max \left \{  \frac{\log(\Delta_0^2|\Delta|)}{2}+ \frac{\log L}{2}, \log L \right \} = \max \{\log(\Delta_0^2|\Delta|) + \log L, 2 \log L \} \nonumber 
\end{align}
where in the second inequality we have used the fact that, for every number field $K$ and any prime $q \in \mathbb{N}$, we always have $\sum_{\mathfrak{q}\mid q} f_{\mathfrak{q}} \leq [K:\mathbb{Q}]$ (here the sum is taken over the prime ideals of $K$ lying above $q$).
Using now together \eqref{Weil height}, \eqref{archimedean part estimate 2} and \eqref{non archimedean part estimate 2} we obtain the following upper bound
\begin{align} \label{upper bound for h(j-j0)}
    h(j-j_0) \leq \frac{8 F \log |\Delta| \cdot C_0}{C_\Delta} &+ \log \left( \frac{F \log |\Delta| \cdot C_0 \cdot |\Delta|^{1/2}}{C_\Delta} \right) + 4\log |\Delta_0| + 0.33  \\
    &+ \max \{\log(\Delta_0^2|\Delta|) + \log L, 2 \log L \} \nonumber
\end{align}
for the Weil height of $j-j_0$. We now look into lower bounds.

In order to find a lower bound for $h(j-j_0)$, we first reduce to the problem of finding a lower bound for $h(j)$ by means of the elementary inequality
\begin{equation} \label{eq:height_inequality_elementary}
    h(j-j_0) \geq h(j)-h(j_0)-\log 2
\end{equation}
see \cite[Proposition 1.5.15]{Bombieri_Gubler_book_2006}. As for bounding $h(j)$, we use the lower bound \cite[Proposition 4.3]{BHK_2018} 
\begin{equation} \label{The hard bound in BHK}
    h(j) \geq \frac{3}{\sqrt{5}} \log |\Delta| - 9.79
\end{equation}
together with \cite[Proposition 4.1]{BHK_2018}
\begin{equation} \label{The easy bound in BHK}
     h(j) \geq \frac{\pi |\Delta|^{1/2}-0.01}{C_\Delta} 
\end{equation}
which generally holds for $|\Delta| \geq 16$. Combining \eqref{eq:height_inequality_elementary} with \eqref{The hard bound in BHK} and \eqref{The easy bound in BHK}, and adding $1$ on both sides, we obtain
\begin{equation} \label{lower bound for h(j-j0)}
    Y(\Delta):=\max \left\{ \frac{3}{\sqrt{5}} \log |\Delta| - 8.79, \frac{\pi |\Delta|^{1/2}}{C_\Delta}\right\} \leq h(j-j_0)+h(j_0)+\log 2+1.
\end{equation}
Concatenating now \eqref{lower bound for h(j-j0)} with \eqref{upper bound for h(j-j0)}, and dividing both sides by $Y(\Delta)$, yields the inequality
\begin{equation} \label{1 leq A+B+C+D}
    1 \leq A(\Delta) + B(\Delta) + C(\Delta) + D(\Delta)
\end{equation}
where 
\begin{align*}
    &A(\Delta)= \frac{8 F \log |\Delta| \cdot C_0}{Y(\Delta) C_\Delta},\\
    &B(\Delta)= \frac{\log \left( F \log |\Delta| \right) + \log C_0 + 4\log |\Delta_0| + h(j_0) + 1.33 + \log 2}{Y(\Delta)}, \\
    &C(\Delta)= \frac{1}{Y(\Delta)} \log \left( \frac{|\Delta|^{1/2}}{C_\Delta} \right) \\
    &D(\Delta)= \frac{1}{Y(\Delta)} \cdot \max \{\log(\Delta_0^2|\Delta|) + \log L, 2 \log L \}.
\end{align*}
We want to show that \eqref{1 leq A+B+C+D} cannot hold if $|\Delta|$ is sufficiently large. As far as estimating the first three terms of \eqref{1 leq A+B+C+D} is concerned, we find ourselves in the same situation as Cai in \cite[Sections 6.1-6.4]{Cai_2020}, and we can directly use the bounds therein obtained. More precisely from \cite[Section 6.2]{Cai_2020} we have, since $|\Delta| > 10^{15}$, that
\begin{equation*} 
    A(\Delta) \leq \frac{8 F \log |\Delta| \cdot C_0}{\pi |\Delta|^{1/2}} \leq \frac{8C_0}{\pi} |\Delta|^{-0.1908} 
\end{equation*}
so for every $\varepsilon_A>0$,
\begin{equation} \label{estimate for A(delta)}
     A(\Delta) \leq \frac{8C_0}{\pi} |\Delta|^{-0.1908} < \varepsilon_A
\end{equation}
holds for $|\Delta|$ sufficiently large. Moreover, using
\[
\log (F \log |\Delta|) \leq \frac{\log 2}{2} \cdot \frac{\log |\Delta|}{\log \log |\Delta| - c_1-\log 2} + \log \log |\Delta|
\]

which is \cite[Inequality (5.8)]{BHK_2018} (here $c_1 \in \mathbb{R}$ is an effectively computable absolute constant defined in \cite[Section 5.2]{BHK_2018}), we have that, for every $\varepsilon_B>0$, the inequality
\begin{equation} \label{estimate for B(delta)}
    B(\Delta) \leq \frac{1}{(3/\sqrt{5})\log |\Delta| -8.79} \left(\frac{\log 2}{2} \cdot \frac{\log |\Delta|}{\log \log |\Delta| - c_1-\log 2} + \log \log |\Delta| + K \right) < \varepsilon_B
\end{equation}
where $K:=\log C_0 + 4\log |\Delta_0| + h(j_0) + 1.33 + \log 2$, holds for $|\Delta|$ sufficiently large. Finally, using the fact that $x \mapsto \log(x)/x$ is a decreasing function when $x\geq 4$, for every $\varepsilon_C>0$ one has

\begin{equation} \label{estimate for C(delta)}
    C(\Delta) \leq \frac{1}{Y(\Delta)} \log \left( \pi^{-1} Y(\Delta)\right) \leq \frac{1}{(3/\sqrt{5})\log |\Delta| -8.79} \log \left(\pi^{-1} \left(\frac{3}{\sqrt{5}}\log |\Delta| -8.79 \right) \right) < \varepsilon_C
\end{equation}
for $|\Delta|$ sufficiently large. We are then left with bounding $D(\Delta)$ from above. For $|\Delta| \geq L/|\Delta_0|^2$ we have
\begin{align*}
    D(\Delta)&= \frac{1}{Y(\Delta)} \cdot   \max \{\log(\Delta_0^2|\Delta|) + \log L, 2 \log L \}  \\
    &\leq \frac{1}{\frac{3}{\sqrt{5}}\log |\Delta| -8.79} \cdot \left(\log |\Delta|+\log L \left( \frac{\log \Delta_0^2}{\log L}+1 \right) \right) \\
    &= \frac{\sqrt{5}}{3} + \frac{1}{\frac{3}{\sqrt{5}}\log |\Delta| -8.79} \cdot \left(\frac{\sqrt{5}}{3}\cdot 8.79 + \log L \left( \frac{\log \Delta_0^2}{\log L}+1 \right) \right)
\end{align*}
 so for every $\varepsilon_D>0$ we obtain
\begin{equation} \label{estimate for D(delta)}
    D(\Delta) \leq \frac{\sqrt{5}}{3} + \varepsilon_D \leq 0.75 + \varepsilon_D
\end{equation}
for $|\Delta|$ sufficiently large (depending on $\Delta_0$ and $\ell_1, \ell_2$). We can now combine \eqref{estimate for A(delta)}, \eqref{estimate for B(delta)}, \eqref{estimate for C(delta)}, \eqref{estimate for D(delta)} with \eqref{1 leq A+B+C+D} to obtain
\begin{equation} \label{1<0.75 + epsilon}
    1 \leq \varepsilon_A + \varepsilon_B + \varepsilon_C  + \varepsilon_D + 0.75 
\end{equation}
which holds for $|\Delta| \gg_{\ell_1, \ell_2, \Delta_0, \varepsilon_A, \varepsilon_B, \varepsilon_C, \varepsilon_D} 0$. Choosing $\varepsilon_A, \varepsilon_B, \varepsilon_C, \varepsilon_D$ small enough, the inequality cannot be verified for sufficiently large $|\Delta|$. This proves that there are at most finitely many singular moduli $j$ such that $j-j_0$ is an $S$-unit, and concludes the proof of the first part of Theorem \ref{main theorem j-j0}.

We now begin the proof of the second part of Theorem \ref{main theorem j-j0}. Suppose $\mathbb{Q} \subseteq \mathbb{Q}(j_0)$ is not Galois. We first claim that every prime in $S$ must be split in $\mathbb{Q}(\sqrt{\Delta_0})$.
Indeed, assume by contradiction that a prime $\ell \in S$ is inert in $\mathbb{Q}(\sqrt{\Delta_0})$ (it cannot ramify by definition of a nice $\Delta_0$-pair).  Let $H_\mathcal{O}:= \mathbb{Q}(j_0, \sqrt{\Delta_0})$ which is a semidihedral Galois extension of $\mathbb{Q}$, and let 
 \[
 H:= \Gal(H_\mathcal{O}/\mathbb{Q}(j_0)) \subseteq \Gal(H_\mathcal{O}/\mathbb{Q})=:G
 \]
 
with generator $\sigma \in H$. Since $\ell$ splits completely in $\mathbb{Q}(j_0)$ and is inert in $\mathbb{Q}(\sqrt{\Delta_0})$, the decomposition group of any prime of $H_\mathcal{O}$ above $\ell$ has order $2$ and certainly contains $H$, since all the primes in $\mathbb{Q}(j_0)$ lying above $\ell$ are inert in $\mathbb{Q}(j_0) \subseteq H_\mathcal{O}$. Hence, every such decomposition group must be equal to $H$ and this means in particular that $\tau H \tau^{-1} = H$ for all $\tau \in G$. We deduce that $\sigma$ commutes with every element of $G$, so $G$ must be abelian and we reach the desired contradiction.
 
 Let now $j \in \overline{\mathbb{Q}}$ be a singular modulus of discriminant $\Delta$ such that $j-j_0$ is an $S$-unit. Since $j-j_0$ cannot be a unit by \cite[Corollary 1.3]{Li_2018}, there exists a prime $\ell \in S$ dividing the norm of $j-j_0$. This implies that there exists a number field $K$, a prime $\mu \subseteq K$ lying above $\ell$ and two elliptic curves $E_0,E_j$ defined over $K$ with good reduction at $\mu$ such that $j(E_j)=j$, $j(E_0)=j_0$ and $E_0 \text{ mod } \mu \cong_{\overline{\mathbb{F}}_\ell} E_j \text{ mod } \mu$. Moreover, since $\ell$ splits in $\mathbb{Q}(\sqrt{\Delta_0})$ by the discussion above, both $E_0$ and $E_j$ have ordinary reduction modulo $\mu$ by \cite[Chapter 13, Theorem 12]{Lang_1987}. From the reduction theory of CM orders (see again \cite[Chapter 13, Theorem 12]{Lang_1987}) and the fact that $\ell \nmid \Delta_0$, we deduce that $\Delta=\ell^{2n} \Delta_0$ for some non-negative integer $n$, as wanted. 
 \end{proof}
 
\begin{remark}
The proof of Theorem \ref{main theorem j-j0} can now be specialized to different situations to obtain explicit results on singular differences that are $S$-units. Indeed, for a given nice $\Delta_0$-pair $(j_0,S)$, it suffices to find a discriminant $\Delta$ whose absolute value is sufficiently large to violate inequality \eqref{1<0.75 + epsilon}, which in turn is a combination of the explicit inequalities \eqref{estimate for A(delta)}, \eqref{estimate for B(delta)}, \eqref{estimate for C(delta)} and \eqref{estimate for D(delta)}. For instance, in the case of Theorem \ref{main theorem j+3375}, with the choice of the nice $(-7)$-pair $(-3375,\{13,17\})$ and $|\Delta|>10^{81}$ one gets
\[
\varepsilon_A + \varepsilon_B + \varepsilon_C  + \varepsilon_D < 0.2485
\]
and similarly with the other choices of primes in the theorem.
\end{remark}

We conclude this section by proving that the hypothesis on the extension $\mathbb{Q} \subseteq \mathbb{Q}(j_0)$ being Galois, which appears in the statement of Theorem \ref{main theorem j-j0}, is verified only for finitely many singular moduli $j_0$.

\begin{proposition} \label{prop:Galois_for_finitely_many_j}
There are at most finitely many singular moduli $j \in \overline{\mathbb{Q}}$ such that the extension $\mathbb{Q} \subseteq \mathbb{Q}(j)$ is Galois.
\end{proposition}

\begin{proof}
Let $\mathcal{O}$ be the imaginary quadratic order relative to $j$, and denote by $K$ its fraction field, with ring of integers $\mathcal{O}_K$ and discriminant $\Delta_K$. By \cite[Corollary 3.3]{Allombert_Bilu_Pizarro_2015} the extension $\mathbb{Q} \subseteq \mathbb{Q}(j)$ is Galois if and only if the class group $\mathrm{Pic}(\mathcal{O})$ of the order $\mathcal{O}$ is elementary $2$-abelian. Since the natural homomorphism $\mathrm{Pic}(\mathcal{O}) \to \mathrm{Pic}(\mathcal{O}_K)$ is surjective (see \cite[Proposition I.12.9]{Neukirch_book_1999}), we deduce that also the class group of $\mathcal{O}_K$ has exponent $2$. Hence, by \cite[Theorem 1]{Weinberger_1973}, there exists a negative fundamental discriminant $D$ such that either $|\Delta_K| \leq 5460$ or $\Delta_K=D$. In particular, we have only a finite number of possibilities for the imaginary quadratic field $K$. 

In order to show that there is also a finite number of possibilities for the order $\mathcal{O}$, we need to bound the possible conductors $f:=|\mathcal{O}_K : \mathcal{O}|$. To do so, consider the genus field $G_\mathcal{O}$ relative to the order $\mathcal{O}$: it is the maximal subextension of the ring class field $H_\mathcal{O}$ that contains $K$ and is abelian over $\mathbb{Q}$. It can also be described as the fixed field by $\mathrm{Pic}(\mathcal{O})^2$ under the Artin isomorphism $\Gal(H_\mathcal{O}/K) \cong \mathrm{Pic}(\mathcal{O})$, see \cite{Halter-Koch_1971} and \cite[Section 2.2]{Kuhne_2013} (note also that the first part of the proof of \cite[Theorem 6.1]{Cox_book_2013} carries over to non-maximal orders after making appropriate modifications). Since $\mathrm{Pic}(\mathcal{O})$ is elementary $2$-abelian by hypothesis, we deduce that $G_\mathcal{O}=H_\mathcal{O}$ and then \cite[Equation (2.3)]{Kuhne_2013} implies that
\begin{equation} \label{eq:inequality_class_group}
    \# \mathrm{Pic}(\mathcal{O}) \leq 2^{\omega (f^2 \Delta_K)+1}
\end{equation}
where $\omega (f^2 \Delta_K)$ denotes the number of distinct prime divisors of $f^2 \Delta_K$. On the other hand, by \cite[Theorem 7.24]{Cox_book_2013} we can write
\begin{equation} \label{eq:class_number_formula}
    f \prod_{p\mid f} \left( 1-\left(\frac{\Delta_K}{p} \right) \frac{1}{p} \right) = \frac{|\mathcal{O}_K^\times : \mathcal{O}^\times|}{\# \mathrm{Pic}(\mathcal{O}_K)} \# \mathrm{Pic}(\mathcal{O}).
\end{equation}
Combining \eqref{eq:inequality_class_group} with \eqref{eq:class_number_formula}, and using the fact that $K$ ranges among a finite set of imaginary quadratic fields, it is not difficult to see that $f$ must be in fact bounded. This concludes the proof.
\end{proof}

\section{Theorems \ref{main theorem j-1728} and \ref{main theorem j-0}} \label{sec: proof the other two theorems}

The proofs of Theorems \ref{main theorem j-1728} and \ref{main theorem j-0}, after a preliminary reduction step, become analogous to the proof of Theorem \ref{main theorem j-j0}. The reader may then wonder why we decided to not write one single argument for all these results. The reason is double: first, for clarity of exposition, since already the proof of Theorem \ref{main theorem j-j0} contains quite involved computations. Second, because differences of the form $j-j_0$ with $j_0 \in \{0,1728\}$ require some extra attention due to the fact that the corresponding elliptic curves $(E_0)_{/\mathbb{Q}}$ with $j(E_0)=j_0$ have more geometric automorphisms than in the other cases. After pondering all these aspects, we chose to only sketch the proofs of the two aforementioned theorems, outlining with all the details only the parts in which they differ from the proof of Theorem \ref{main theorem j-j0}. We begin with Theorem \ref{main theorem j-1728}.

\begin{proof}[Proof of Theorem \ref{main theorem j-1728}]

First of all, we show that it is sufficient to prove that, under the assumptions of the theorem, the set of singular moduli $j$ such that $j-1728$ is an $\{\ell \}$-unit is finite and the discriminants of its elements can be effectively bounded. Indeed, suppose that $j-1728$ is a singular $S_\ell$-unit and assume that $p \in S_0$ is a prime dividing its norm $N_{\mathbb{Q}(j)/\mathbb{Q}}(j-1728)$. Then for every prime $\mathfrak{p} \subseteq \overline{\mathbb{Q}}$ lying above $p$ we have $j\equiv 1728 \text{ mod } \mathfrak{p}$ and $\mathfrak{p}$ is a prime of ordinary reduction for every elliptic curve over $\overline{\mathbb{Q}}$ with $j$-invariant $1728$ or $j$. In particular, $1728$ and $j$ must be associated with the same imaginary quadratic field $\mathbb{Q}(\sqrt{-1})$. It has then been proved in \cite[Claim 6.1]{Campagna_2020} that in this case, there are at least other $3$ primes not congruent to $1$ modulo $4$ dividing this norm. In particular, $j-1728$ cannot be a singular $S_\ell$-unit (the existence of this argument is also remarked in \cite[Section 1.1]{Herrero_Menares_Rivera_Part3}).
 
Hence we are reduced to bounding the discriminants of the singular moduli $j$ such that $j-1728$ is an $\{\ell\}$-unit for $\ell \geq 5$ a prime congruent to $3$ modulo $4$.
Let then $j \in \overline{\mathbb{Q}}$ be a singular modulus such that $j-1728$ is an $\{\ell\}$-unit. In the same way as in the previous section, we compute the Weil height
\begin{equation} \label{Weil height j-1728}
   h(j-1728)=h((j-1728)^{-1})=\frac{1}{[\mathbb{Q}(j):\mathbb{Q}]} \sum_{v \in \mathcal{M}_{\mathbb{Q}(j)}} d_v \log^+ |(j-1728)^{-1}|_v = A + N 
\end{equation}
where, again, $d_v:=[\mathbb{Q}(j)_v:\mathbb{Q}_v]$ is the local degree at the place $v$ and
\[
A:=\frac{1}{[\mathbb{Q}(j):\mathbb{Q}]} \sum_{v \in \mathcal{M}^{\infty}_{\mathbb{Q}(j)}} d_v \log^+ |(j-1728)^{-1}|_v \hspace{0.5cm} \text{and} \hspace{0.5cm} N:=\frac{1}{[\mathbb{Q}(j):\mathbb{Q}]} \sum_{v \mid \ell } d_v \log^+ |(j-1728)^{-1}|_v
\]
are, respectively, the archimedean and non-archimedean components of the height. For $|\Delta|$ big enough, we can bound the archimedean component using another time the work of Cai \cite{Cai_2020}. More precisely, \cite[Corollary 4.2]{Cai_2020} gives for $|\Delta| \geq 10^{14}$
\begin{equation} \label{Archimedean j-1728}
A \leq \frac{4 F \log |\Delta|}{C_{\Delta}}+2 \log \frac{F |\Delta|^{1/2} \log |\Delta|}{C_{\Delta}}-2.68
\end{equation}
where $C_{\Delta}$ is the class number of the order of discriminant $\Delta$ and $F= \max \{2^{\omega(a)}: a\leq |\Delta|^{1/2} \}$ as in the previous section. The non-archimedean component can be rewritten as
\begin{align} \label{non archimedean computation j-1728}
N= \frac{1}{[\Q(j):\Q]} \sum_{\mathfrak{p} \mid \ell} v_{\mathfrak{p}}(j-1728) \log \ell^{f_{\mathfrak{p}}}=\frac{\log \ell}{[\Q(j):\Q]} \sum_{\mathfrak{p} \mid \ell} v_{\mathfrak{p}}(j-1728) f_{\mathfrak{p}}
\end{align} 
where the sum runs over primes $\mathfrak{p}$ of $\mathbb{Q}(j)$ lying above $\ell$, and $f_\mathfrak{p}$ denotes the residue degree of $\mathfrak{p}$ over $\ell$. To estimate the valuation from above, we can apply Theorem \ref{Theorem valuation j-j0} since all the hypotheses are met also in this case: $\ell$ has certainly degree $1$ in $\mathbb{Q}(1728)=\mathbb{Q}$ and is coprime with $-4=\disc \mathbb{Q}(i)$. Moreover, the elliptic curve ${E_{1728}}_{/\mathbb{Q}}: y^2=x^3+x$ has $j(E_{1728})=1728$ and good reduction at all primes $\ell \neq 2$. We deduce that for all $\mathfrak{p} \mid \ell$ we have
\[
v_{\mathfrak{p}}(j-1728) \leq \max \left \{ \frac{\log(16|\Delta|)}{\log \ell} + 1, 2 \right \}
\]
where in the application of Theorem \ref{Theorem valuation j-j0} one has $d_0=4$ since $\ell \geq 5$ (see \cite[III, Theorem 10.1]{Silverman_2009}). Combining the above estimate with \eqref{non archimedean computation j-1728} we obtain
\begin{equation} \label{non-archimedean j-1728}
    N \leq \max \{\log(16|\Delta|) + \log \ell, 2\log \ell \}
\end{equation}
so putting together \eqref{Weil height j-1728}, \eqref{Archimedean j-1728} and \eqref{non-archimedean j-1728} we get
\begin{equation} \label{A+N j-1728}
   h(j-1728) \leq \frac{4 F \log |\Delta|}{C_{\Delta}}+2 \log \frac{F |\Delta|^{1/2} \log |\Delta|}{C_{\Delta}}-2.68 + \max \{\log (16 |\Delta|) + \log \ell, 2\log \ell \}
\end{equation}
for $|\Delta| \geq 10^{14}$. Now the lower bound \eqref{lower bound for h(j-j0)} allows to conclude exactly in the same way as in the proof of Theorem \ref{main theorem j-j0}.
\end{proof}

As the reader may have noticed, the intimate reason why the proofs of Theorems \ref{main theorem j-j0} and \ref{main theorem j-1728} work out is that the lower bound \eqref{lower bound for h(j-j0)} is sufficiently good to prevail on the estimates \eqref{non archimedean part estimate 2} and \eqref{non-archimedean j-1728} for the non-archimedean parts of the relevant Weil heights. This will not be the case for $j_0=0$, since in this case one has to take $d_0 \geq 6$ in the inequalities of Theorem \ref{Theorem valuation j-j0}. This is the reason why the proof of Theorem \ref{main theorem j-0} is conditional under GRH. However, as already mentioned in the introduction, Theorem \ref{main theorem j-0} does not need the full strength of the Generalized Riemann Hypothesis to be proved, but only that a weaker condition on the Dirichlet $L$-functions associated to imaginary quadratic fields holds. The goal of the subsequent discussion is to introduce this condition, and to deduce from its assumption a lower bound for the Weil height of a singular modulus that is sharp enough to prove Theorem \ref{main theorem j-0} with our methods.

Recall that non-principal real primitive Dirichlet characters are precisely the Kronecker symbols attached to quadratic field extensions of $\mathbb{Q}$. We say that such a Dirichlet character has discriminant $D \in \mathbb{Z}$ if it is the Kronecker symbol attached to a quadratic field of discriminant $D$.

\begin{definition} \label{definition properti p(k)}
Let $k \in \mathbb{R}$ be a non-negative real number. A non-principal real primitive Dirichlet character $\chi$ of discriminant $D$ is said to satisfy \textit{property $P(k)$} if 
\[
\frac{L'(\chi, 1)}{L(\chi,1)} \geq -0.2485 \log |D| - k
\]
where the left-hand side of the inequality is the logarithmic derivative of the Dirichlet $L$-function $L(\chi,s)$ associated to $\chi$.
\end{definition}

\begin{remark}
The inequality appearing in Definition \ref{definition properti p(k)} may seem a bit arbitrary, and indeed it is. Actually for our purposes, we could take any inequality of the form
\[
\frac{L'(\chi, 1)}{L(\chi,1)} \geq -c \log |D| - k
\]
with $c<0.25$ as a definition for the property $P(k)$, and all the following proofs would work in the same way.
\end{remark}

\begin{remark}
It is proved in \cite{Mourtada_Murty_2013} that the logarithmic derivative of Dirichlet $L$-functions attached to Kronecker symbols of imaginary quadratic fields is actually positive for infinitely many negative fundamental discriminants. In particular, property $P(0)$ holds for infinitely many real primitive Dirichlet characters of negative discriminant.
\end{remark}

Let now $j \in \overline{\mathbb{Q}}$ be a singular modulus relative to an order in the imaginary quadratic field $K$. Under the assumption that the Kronecker symbol associated to $K$ satisfies property $P(k)$ for some non-negative $k \in \mathbb{R}$, we are able to provide a lower bound for the Weil height of $j$ in terms of its discriminant $\Delta$. In order to make this assertion precise, we introduce some notation. For an elliptic curve $E$ defined over a number field $L$, denote by $h_F(E)$ its stable Faltings height \cite[pag. 354]{Faltings_1983} with Deligne's normalization \cite{Deligne_1985}. We continue writing $h: \overline{\mathbb{Q}} \to \mathbb{R}$ for the logarithmic Weil height of an algebraic number. 

\begin{proposition} \label{lower bound Weil height}
Let $j$ be a singular modulus of discriminant $\Delta=f^2 \Delta_K$, where $\Delta_K$ is the discriminant of the imaginary quadratic field $K$ relative to $j$. If for some $k\in \mathbb{R}_{\geq 0}$ property $P(k)$ holds for the non-principal real primitive Dirichlet character $\chi$ of discriminant $\Delta_K$, then
\[
h(j) \geq 1.509 \log |\Delta| + C
\]
for some effective constant $C=C(k) \in \mathbb{R}$.
\end{proposition}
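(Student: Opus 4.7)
The plan is to reduce the lower bound on $h(j)$ to a lower bound on the stable Faltings height $h_F(E)$ of a CM elliptic curve $E/\overline{\mathbb{Q}}$ with $j(E) = j$, and then exploit the Colmez--Nakkajima--Taguchi identity that expresses $h_F(E)$ essentially as a linear combination of $\log|\Delta_K|$, $\log f$ and the logarithmic derivative $L'(\chi,1)/L(\chi,1)$. The hypothesis $P(k)$ feeds directly into the last term to produce a lower bound with the numerically explicit coefficient $0.12575$ in front of $\log|\Delta|$, which becomes $1.509$ after multiplication by $12$.

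The first step would be the comparison between the Weil height of the $j$-invariant and the Faltings height. Since CM elliptic curves have everywhere potential good reduction, a standard refinement of Silverman's estimate yields an inequality of the form
\[
h(j) \ \geq \ 12\, h_F(E) - O\bigl(\log(1+h_F(E))\bigr),
\]
so it suffices to establish a lower bound $h_F(E) \geq 0.12575 \log|\Delta| + C'(k)$: the logarithmic error term will then be absorbed into $C(k)$ because $h_F(E) = O(\log|\Delta|)$, making $\log(1+h_F(E)) = O(\log\log|\Delta|)$.

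The second step is the explicit Faltings-height formula. First, for the case of CM by the maximal order $\mathcal{O}_K$ I would invoke Colmez's formula and transport it from $s=0$ to $s=1$ via the functional equation of the completed $L$-function $\Lambda(\chi,s) = (|\Delta_K|/\pi)^{s/2}\Gamma((s+1)/2)L(\chi,s)$. A routine logarithmic differentiation of $\Lambda(\chi,s) = \Lambda(\chi,1-s)$ at $s=0$ gives
\[
h_F(E_{\mathcal{O}_K}) \ = \ \tfrac{1}{2}\,\frac{L'(\chi,1)}{L(\chi,1)} + \tfrac{1}{4}\log|\Delta_K| + c_0,
\]
with $c_0$ an absolute constant coming from $\psi(1) + \psi(1/2)$ and powers of $\pi$. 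Applying $P(k)$ yields
\[
h_F(E_{\mathcal{O}_K}) \ \geq \ \bigl(\tfrac{1}{4} - \tfrac{0.2485}{2}\bigr)\log|\Delta_K| + (c_0 - k/2) \ = \ 0.12575\log|\Delta_K| + C_1(k).
\]
To pass from $\mathcal{O}_K$ to an arbitrary order $\mathcal{O}$ of conductor $f$ I would invoke the Nakkajima--Taguchi formula, which gives
\[
h_F(E_{\mathcal{O}}) \ = \ h_F(E_{\mathcal{O}_K}) + \tfrac{1}{2}\log f + \text{(bounded correction over primes } p\mid f\text{)},
\]
the correction being controlled uniformly by an absolute constant.

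The third step is just the bookkeeping: writing $\log|\Delta_K| = \log|\Delta| - 2\log f$ and combining the two previous formulas, the $\log f$ terms produce a coefficient $(1/2 - 2\cdot 0.12575) = 0.2485 \geq 0$ in front of $\log f$, so they can be dropped while preserving a lower bound. This yields
\[
h_F(E_{\mathcal{O}}) \ \geq \ 0.12575\log|\Delta| + C_2(k),
\]
and multiplying by $12$ (and absorbing the Silverman error) produces the stated bound. The main obstacle I expect is verifying that the corrections coming from the non-maximal order part of Nakkajima--Taguchi remain uniformly bounded in $f$ with an absolute constant (rather than something growing with $\omega(f)$); fortunately the coefficient $0.2485$ in front of $\log f$ is positive, so even a mild negative contribution there would be absorbed without degrading the main coefficient $1.509$.
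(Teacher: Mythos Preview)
Your overall strategy matches the paper's proof: reduce to a Faltings-height lower bound via a comparison with $h(j)$, apply the Colmez/Nakkajima--Taguchi formula, and feed in property $P(k)$. Two points need correction.

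First, the absorption argument in Step~1 is wrong as stated. You claim the error $O(\log(1+h_F(E)))=O(\log\log|\Delta|)$ ``will be absorbed into $C(k)$'', but $C(k)$ must be independent of $\Delta$, while $\log\log|\Delta|\to\infty$. With only a Silverman-type inequality carrying a logarithmic error you would obtain $h(j)\geq c\log|\Delta|+C(k,c)$ for any fixed $c<1.509$, not the coefficient $1.509$ itself. The paper avoids this entirely by invoking the explicit inequality of Gaudron--R\'emond \cite[Lemma~7.9]{Gaudron_Remond_2014}, namely $h(j)\geq 12\,h_F(E)+8.64$, which has an additive absolute constant and no logarithmic loss; this is what produces $12\times 0.12575=1.509$ on the nose.

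Second, you correctly flag the Nakkajima--Taguchi correction as the main obstacle but do not bound it. The paper's resolution is to show that the function
\[
\delta(n)=0.2485\log n-\sum_{p\mid n}\frac{\log p}{p+1}\cdot\frac{1-p^{-v_p(n)}}{1-p^{-1}}
\]
is additive, nondecreasing along prime powers, and satisfies $\delta(p)>0$ for all primes $p\geq 5$; consequently $\delta(f)\geq\delta(2)+\delta(3)\geq -0.0605$ for every $f\geq 1$. This is precisely the mechanism by which the positive coefficient $0.2485$ in front of $\log f$ absorbs the correction, so your intuition was right, but the small-prime check is essential and not automatic.
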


\begin{proof}
Let $E/\mathbb{Q}(j)$ be an elliptic curve with $j(E)=j$. Using \cite[Lemma 7.9]{Gaudron_Remond_2014}, the logarithmic Weil height of $j$ can be bounded from below by the stable Faltings height of $E$ as follows
\begin{equation} \label{Gaudron_Remond inequality}
    h(j) \geq 12 h_F(E) + 8.64.
\end{equation}

We can explicitly compute the stable Faltings height of $E$ using the well-known results of Colmez \cite{Colmez_1998} and Nakkajima-Taguchi \cite{Nakkajima_Taguchi_1991}, as done for instance in \cite[Lemma 4.1]{Habegger_2010}. One has
\[
h_F(E)=\frac{1}{4} \log(|\Delta|) + \frac{1}{2}\frac{L'(\chi,1)}{L(\chi,1)} - \frac{1}{2} \left(\sum_{p\mid f} e_f(p) \log p \right) -\frac{1}{2} (\gamma+\log(2\pi))
\]

where $\gamma$ is the Euler-Mascheroni constant, $f$ is the conductor of the CM order and for a prime $p$ we define
$$e_f(p):=\frac{1-\chi(p)}{p-\chi(p)} \frac{1-p^{-v_p(f)}}{1-p^{-1}}.$$

Using property $P(k)$ we then get
\begin{align*}
    h_F(E) &\geq \frac{1}{4} \log(|\Delta|) + \frac{1}{2} \left(-0.2485 \log |\Delta_K| - k \right) - \frac{1}{2} \left(\sum_{p\mid f} e_f(p) \log p \right) -\frac{1}{2} (\gamma+\log(2\pi)) \\
    &= \frac{1}{4} \log(|\Delta|) + \frac{1}{2}(-0.2485 \log |\Delta| - 0.2485 \log f^{-2}-k) - \frac{1}{2} \left(\sum_{p\mid f} e_f(p) \log p \right) -\frac{1}{2} (\gamma+\log(2\pi)) \\
    &=0.12575 \log |\Delta| + 0.2485 \log f - \frac{1}{2} \left(\sum_{p\mid f} e_f(p) \log p \right) -\frac{1}{2} (\gamma+\log(2\pi) + k).
\end{align*}

We want to bound from below the quantity
\[
A(f):=0.2485 \log f - \frac{1}{2} \left(\sum_{p\mid f} e_f(p) \log p \right).
\]
To do this, one can proceed exactly as in \cite[Section 4]{BHK_2018}. First, one notices that
\[
e_f(p) \leq \frac{2}{p+1} \cdot \frac{1-p^{-v_p(f)}}{1-p^{-1}}
\]
by considering all the possible values of the Dirichlet character $\chi(p)$. Setting now for all $n\in \mathbb{N}_{>0}$ 
\[
\delta(n):=0.2485 \log n - \left(\sum_{p\mid n} \frac{\log p}{p+1} \cdot \frac{1-p^{-v_p(n)}}{1-p^{-1}} \right),
\]
one notices that $\delta(n)$ is an additive function and satisfies $\delta(p^{r+1}) \geq \delta(p^r)$ for all primes $p \in \mathbb{N}$ and integers $r>0$. Since one has $\delta(2), \delta(3) < 0$ and $\delta(p)>0$ for all primes $p\geq 5$, we deduce that $\delta(n) \geq \delta(2) + \delta(3)$ for all $n \in \mathbb{N}_{>0}$. We then have
\[
A(f) \geq \delta(f) \geq \delta(2) + \delta(3)= 0.2485(\log 2 + \log 3)- \left( \frac{\log 2}{3} + \frac{\log 3}{4} \right) > -0.0605.
\]
In conclusion, we obtain
\begin{equation} \label{Estimate on Faltings height}
    h_F(E) > 0.12575 \log |\Delta| - C_0
\end{equation}
where we set
\[
C_0=\frac{1}{2} (\gamma+\log(2\pi) + k) + 0.0605.
\]
Combining now \eqref{Gaudron_Remond inequality} with \eqref{Estimate on Faltings height} we obtain
\[
h(j)> 1.509 \log |\Delta| - 12C_0 + 8.64
\]
and this concludes the proof.
\end{proof}

We now state and prove a stronger version of Theorem \ref{main theorem j-0}, whose proof relies on the use of property $P(k)$ rather than on the use of GRH. We then show how Theorem \ref{main theorem j-0} follows from this stronger statement.

\begin{theorem} \label{true main theorem j-0}
Let $S_0$ be the set of rational primes congruent to $1$ modulo $3$, let $\ell \geq 5$ be an arbitrary prime and set $S_\ell:=S_0 \cup \{\ell \}$. Assume that all the Kronecker symbols attached to imaginary quadratic fields satisfy property $P(k)$ for some fixed $k \in \mathbb{R}_{\geq 0}$. Then there exists an effectively computable bound $B=B(\ell,k) \in \mathbb{R}_{\geq 0}$ such that the discriminant $\Delta_j$ of every singular $S_\ell$-unit $j \in \overline{\mathbb{Q}}$ satisfies $|\Delta_j| \leq B$. In particular, the set of singular moduli that are $S_\ell$-units is finite and its cardinality can be effectively bounded.
\end{theorem}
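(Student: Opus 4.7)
The plan is to adapt the strategy of Theorems \ref{main theorem j-j0} and \ref{main theorem j-1728}, using Proposition \ref{lower bound Weil height} in place of the weaker BHK-type lower bound for the Weil height. A first reduction, in the spirit of the one appearing at the beginning of the proof of Theorem \ref{main theorem j-1728}, shows that it suffices to bound singular $\{\ell\}$-units. Indeed, were a prime $p \in S_0$ to divide $N_{\mathbb{Q}(j)/\mathbb{Q}}(j)$, then the argument from \cite{Campagna_2020} (i.e.\ the same mechanism that forbids singular $S_0$-units, established via the analogue of \cite[Theorem 6.1]{Campagna_2020} for $j_0 = 0$) would force at least three further primes not congruent to $1$ modulo $3$ to divide this norm. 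Since $S_\ell \setminus S_0 = \{\ell\}$ contains only one element, this is a contradiction, and hence $j$ must be a singular $\{\ell\}$-unit.

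For such a $j$ of discriminant $\Delta$ with $|\Delta|$ sufficiently large, I would decompose the Weil height as
\[
h(j) = h(j^{-1}) = A + N,
\]
with $A$ the archimedean contribution and $N$ the contribution from the primes of $\mathbb{Q}(j)$ above $\ell$, and bound $A$ from above by \cite[Corollary 4.2]{Cai_2020} exactly as in the preceding sections. For the non-archimedean part I would apply Theorem \ref{Theorem valuation j-j0} with $j_0 = 0$ and $\Delta_0 = -3$, using the model $E_0 : y^2 = x^3 + 1$, which has good reduction at every prime $\ell \geq 5$; the hypotheses of the theorem are immediate since $\mathbb{Q}(j_0) = \mathbb{Q}$. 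By \cite[III, Theorem 10.1]{Silverman_2009} one has $d_0 = 6$ in characteristic $\ell \geq 5$, and so for every prime $\mu$ of the compositum above $\ell$
\[
v_\mu(j) \le \max\!\left\{ \frac{3 \log(9|\Delta|)}{2 \log \ell} + \frac{3}{2},\; 3 \right\}.
\]
Summing over primes $\mathfrak{p} \mid \ell$ of $\mathbb{Q}(j)$ and multiplying by $\log \ell$ yields $N \le \tfrac{3}{2}\log|\Delta| + O(\log \ell)$, whose leading coefficient in $\log|\Delta|$ equals exactly $3/2$, independently of $\ell$.

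The lower bound is then supplied by Proposition \ref{lower bound Weil height}: under property $P(k)$ for the Kronecker symbol of the CM field of $j$ we have $h(j) \ge 1.509 \log|\Delta| + C(k)$ for some effectively computable constant $C(k)$. Dividing $h(j) \le A + N$ by this lower bound produces, in complete analogy with inequality \eqref{1 leq A+B+C+D}, a relation of the shape $1 \le A(\Delta) + B(\Delta) + C(\Delta) + D(\Delta)$ in which the term $D(\Delta)$ now satisfies $D(\Delta) \le (3/2)/1.509 + o(1) = 0.9940\ldots + o(1) < 1$, while $A(\Delta), B(\Delta), C(\Delta) \to 0$ as $|\Delta| \to \infty$. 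The inequality therefore cannot hold for $|\Delta|$ larger than some effectively computable $B(\ell, k)$, which yields the theorem. The main obstacle is precisely the tightness of this last comparison: the BHK-type lower bound $h(j) \ge (3/\sqrt{5}) \log|\Delta| - 9.79$ used in the preceding sections has coefficient $1.342 < 3/2$ and is therefore insufficient when $d_0 = 6$; property $P(k)$ is exactly the input that lifts this coefficient to the slightly-above-$3/2$ value $1.509$ on which the whole argument hinges.
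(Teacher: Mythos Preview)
Your proposal is correct and follows essentially the same approach as the paper: the same reduction to $\{\ell\}$-units via \cite{Campagna_2020} (the paper cites \cite[Theorem 1.2]{Campagna_2020} rather than an analogue of Theorem 6.1), the same application of Theorem \ref{Theorem valuation j-j0} with $d_0=6$, and the same use of Proposition \ref{lower bound Weil height} to push the lower-bound coefficient above $3/2$. The only cosmetic differences are that the paper bounds the archimedean part via \cite[Corollary 3.2]{BHK_2018} rather than Cai, and explicitly records that the auxiliary bound \eqref{The easy bound in BHK} is still needed alongside Proposition \ref{lower bound Weil height} to handle the $A(\Delta)$ and $C(\Delta)$ terms.
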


\begin{proof}
The proof is essentially identical to the proof of Theorem \ref{main theorem j-1728}, and we only sketch the argument. First of all, it is again sufficient to prove that, under the assumptions of the theorem, the set of singular $\{\ell\}$-units is finite and the discriminants of its elements can be effectively bounded. This follows in the same way as done at the beginning of the proof of Theorem \ref{main theorem j-1728}, but this time appealing to the proofs of \cite[Theorem 1.2 and Claim 3.1]{Campagna_2020}. Hence we are reduced to bounding the discriminants of singular $\{\ell\}$-units for $\ell \geq 5$ a prime congruent to $2$ modulo $3$. Let $j$ be a singular $\{\ell\}$-unit relative to the order $\mathcal{O}$ of discriminant $\Delta$. Again, one decomposes its logarithmic Weil height $h(j)$ into a sum $h(j)=A+N$ of an archimedean and a non-archimedean component.

The archimedean component $A$ has been studied in \cite[Corollary 3.2]{BHK_2018}. Here it is proved that, for $|\Delta| \geq 10^{14}$, we have
\begin{equation} \label{Archimedean part j-0}
A \leq \frac{12 F \log |\Delta|}{C_{\Delta}}+3 \log \frac{F |\Delta|^{1/2} \log |\Delta|}{C_{\Delta}}-3.77
\end{equation}
where $C_{\Delta}$ is the usual class number of the order of discriminant $\Delta$ and $F= \max \{2^{\omega(a)}: a\leq |\Delta|^{1/2} \}$. Note that, although \cite[Corollary 3.2]{BHK_2018} is only formulated for singular units, it also holds for general singular moduli (with the same proof) if one restricts to considering the archimedean component of their height. The non-archimedean part can be written as
\begin{align} \label{non archimedean part j-0}
N=\frac{\log \ell}{[\Q(j):\Q]} \sum_{\mathfrak{p} \mid \ell} v_{\mathfrak{p}}(j) f_{\mathfrak{p}}
\end{align} 

where $f_{\mathfrak{p}}$ denotes the residue degree of the prime $\mathfrak{p} \subseteq \Q(j)$ lying above $\ell$. Using Theorem \ref{Theorem valuation j-j0} with the elliptic curve ${E_0}_{/\mathbb{Q}}: y^2=x^3+1$ with $j(E_0)=0$ and noticing that $d_0=6$ because $\ell \geq 5$ we have
\[
v_{\mathfrak{p}}(j) \leq \max \left \{ 3\left( \frac{ \log (9 |\Delta|)}{2\log \ell} +\frac{1}{2} \right), 3 \right \}
\]
and, combining this estimate with equality \eqref{non archimedean part j-0}, we get
\begin{align} \label{non-Archimedean estimate j-0}
    N  \leq \max \left \{ \frac{3}{2}(\log (9|\Delta|) + \log \ell ), 3 \log \ell \right \}.
\end{align}

 A lower bound for the height $h(j)$ can be obtained by combining the conditional Proposition \ref{lower bound Weil height} with \eqref{The easy bound in BHK}. The conclusion of the proof can be then carried out in the same way as the proof of Theorem \ref{main theorem j-j0}.
\end{proof}

\begin{proof}[Proof of Theorem \ref{main theorem j-0}]
The fact that the Dirichlet $L$-functions attached to imaginary quadratic fields satisfy GRH implies in particular that for every non-principal real primitive Dirichlet character $\chi$ of negative discriminant $D$ we have
\[
\frac{L'(\chi, 1)}{L(\chi,1)}= O (\log \log |D|),
\]
where the implied constant is absolute, see for instance \cite[Section 3.1]{Granville_Stark_2000} or \cite[Theorems 1 and 3]{Ihara_2006} for the explicitness of the implied constant in the case $|D|>8$ (in the remaining cases, one can find an explicit bound for the absolute value of the logarithmic derivative for instance by first relating it to the Faltings height as done in \cite[Lemma 4.1]{Habegger_2010} and then by using some bounds on the difference between the Faltings height and the $j$-height \cite[Lemmas 2.6 and 3.2]{Pazuki_2019}). In particular, there exists $k \in \mathbb{R}_{\geq 0}$ such that property $P(k)$ holds for all Kronecker symbols attached to imaginary quadratic fields. Now one concludes by applying Theorem \ref{true main theorem j-0}.
\end{proof}

\section{An unsuccessful attempt at making Theorem \ref{main theorem j-0} unconditional} \label{sec: remove GRH?}

The goal of this section is to show that the naive attempt at making Theorem \ref{main theorem j-0} unconditional by improving the bounds obtained in Theorem \ref{Theorem valuation j-j0} is fruitless. Namely, we will prove that the order of magnitude of the bounds appearing in Theorem \ref{Theorem valuation j-j0} cannot be improved in general, at least in the case $j_0=0$. Under the condition that the considered prime $\ell$ divides the discriminant of the order $\mathcal{O}_j$ corresponding to the singular modulus $j$, it is easy to provide examples in which the second upper-bound of \eqref{valuation of j-j0} is reached. For instance, each of the singular moduli $j$ of discriminant $\Delta=-7\cdot 5^2$ is divided by the unique prime $\mathfrak{p}_5 \subseteq \mathbb{Q}(j)$ above $5$ and we have $v_{\mathfrak{p}_5}(j)=3$ (note that $d_0=6$ in this case). On the other hand, if $\ell$ does not divide the discriminant of $\mathcal{O}_j$ the claimed optimality follows from the following theorem.

\begin{theorem} \label{optimality of the bounds j-0}
Let $\ell \geq 5$ be a prime with $\ell \equiv 2 \text{ mod } 3$. There exists an infinite family of singular moduli $j$ whose corresponding discriminant $\Delta_j$ is coprime with $\ell$ and which satisfy
\[
v_\mu (j) \geq 3 \left(\frac{\log(|\Delta_j|-3)}{2 \log \ell} + \frac{1}{2} - \frac{\log 2}{\log \ell }\right)
\]
for some prime ideal $\mu \subseteq H_\mathcal{O}$ lying above $\ell$. Here $H_\mathcal{O}$ denotes the ring class field relative to the order $\mathcal{O}$ associated to $j$. 
\end{theorem}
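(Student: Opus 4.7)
The plan is to exhibit, for every $n\in\N$, a CM order $\mathcal{O}_n$ of discriminant $\Delta_n:=-(3+4\ell^{2n-1})$ and a prime $\mu$ of the ring class field $H_{\mathcal{O}_n}$ above $\ell$ such that any singular modulus $j$ attached to $\mathcal{O}_n$ satisfies $v_\mu(j)\ge 3n$. Substituting $|\Delta_n|-3=4\ell^{2n-1}$ gives $3n=3\bigl(\tfrac{\log(|\Delta_n|-3)}{2\log\ell}+\tfrac12-\tfrac{\log 2}{\log\ell}\bigr)$, which is exactly the bound sought; since the $\Delta_n$ are pairwise distinct with $\gcd(|\Delta_n|,\ell)=\gcd(3,\ell)=1$ for $\ell\geq 5$, this produces the desired infinite family. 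The construction proceeds by exhibiting an optimal embedding of $\mathcal{O}_n$ into $\End_{W/\pi^n}(E_0)=\Z[\psi]+\ell^{n-1}R$ (\Cref{deformation}(b)), where $E_0/W$ with $W=\widehat{\Q^{\mathrm{unr}}_\ell}$ has $j(E_0)=0$, $R=\End_{W/\pi}(E_0)$ is the maximal quaternion order, $\psi\in R$ is the image of $(\Delta_0+\sqrt{\Delta_0})/2=(-3+\sqrt{-3})/2$, and $\varphi$ is the $\ell$-power Frobenius. Once such an optimal embedding is built, Serre--Tate theory and Deuring's theorem produce a CM elliptic curve $E_j/W$ with $\End_W(E_j)\cong \mathcal{O}_n$ whose reduction modulo $\pi^n$ is isomorphic (as a scheme) to $E_0\bmod\pi^n$. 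Because $\ell\ge 5$ forces $\mu_6\subset W$, we have $\#\Aut_{W/\pi^k}(E_0)=6$ for every $k$, so the identity used in the proof of \Cref{Precise valuation} yields $v_\mu(j)=\tfrac12\sum_{k\ge1}\#\Iso_{W/\pi^k}(E_j,E_0)\ge\tfrac12\cdot 6n=3n$.

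The candidate pure quaternion is $\beta_n:=3+2\psi+2\ell^{n-1}\varphi$. Using $\psi^2+3\psi+3=0$, $\varphi^2=-\ell$, and the relation $\psi\varphi+\varphi\psi=(\psi+\overline\psi)\varphi=-3\varphi$ derived from \Cref{good element in the quaternion algebra}, a direct expansion yields $\beta_n^2=-3-4\ell^{2n-1}$, so $\trd(\beta_n)=0$ and $\nrd(\beta_n)=3+4\ell^{2n-1}=|\Delta_n|$. The decomposition $\beta_n=3\cdot 1+2(\psi+\ell^{n-1}\varphi)$ shows $\beta_n\in\Z+2(\Z[\psi]+\ell^{n-1}R)$, so by \Cref{Gross Lattice} the assignment $\sqrt{\Delta_n}\mapsto\beta_n$ defines an embedding $\iota_n:\mathcal{O}_n\hookrightarrow\Z[\psi]+\ell^{n-1}R$. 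The crucial point is the \emph{optimality} of $\iota_n$, equivalently (by \Cref{Gross Lattice}) the primitivity of $\beta_n$ in the Gross lattice. Since $\tilde R:=\Z[\psi,\varphi]$ has discriminant $\Delta_0^2\ell^2=9\ell^2$ while $\disc R=\ell^2$, the index $[R:\tilde R]=3$, so every $b\in R$ has coordinates in $\tfrac13\Z$ in the basis $\{1,\psi,\varphi,\psi\varphi\}$. A coordinate comparison shows that $\beta_n/c\in\Z+2(\Z[\psi]+\ell^{n-1}R)$ for some integer $c>1$ would force the existence of $b\in R$ whose $\varphi$-coordinate is $1/c$ and whose $\psi\varphi$-coordinate is $0$; hence $c\mid 3$ and the only candidate is $c=3$.

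The case $c=3$ is ruled out by a mod-$9$ obstruction in which the hypothesis $\ell\equiv 2\pmod 3$ enters decisively. Such a $b\in R$ with $b_3=1/3$, $b_4=0$, $b_1,b_2\in\tfrac13\Z$ must satisfy $\trd(b)=2b_1-3b_2\in\Z$, which forces $b_1\in\Z$. A short computation gives $\nrd(b)=b_1^2-3b_1b_2+3b_2^2+\ell/9$; writing $b_2=v/3$ with $v\in\Z$, integrality of $\nrd(b)$ becomes $3v^2+\ell\equiv 0\pmod 9$. But $3v^2\in\{0,3\}\pmod 9$, so one would need $\ell\in\{0,6\}\pmod 9$, both of which are impossible since $\ell$ is a prime coprime to $3$. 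Hence $c=1$ and $\iota_n$ is optimal. The main obstacle of the argument is this optimality verification, because $R$ is not explicitly known beyond its lattice-level data; the leverage comes precisely from the hypothesis $\ell\equiv 2\pmod 3$, which, combined with the index-$3$ structure $[R:\tilde R]=3$, produces a rigid obstruction modulo $9$. Once $\iota_n$ is optimal, the deformation-theoretic input of \Cref{deformation}(a) (together with Deuring's lifting) produces the CM elliptic curve $E_j$ and the inequality $v_\mu(j)\ge 3n$ discussed above, completing the proof.
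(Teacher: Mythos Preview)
Your proof is correct and lands on exactly the same infinite family $\Delta_n=-(3+4\ell^{2n-1})$ as the paper, but you reach optimality of the embedding by a genuinely different route. The paper first computes the maximal order $R=\End_{\overline{\mathbb{F}}_\ell}(E_0)$ explicitly (Proposition~\ref{quaternionic order for j0=0}), then writes down a $\mathbb{Z}$-basis of the Gross lattice of $R_n=\mathbb{Z}[\zeta_3]+\ell^n R$ and the associated ternary form $Q_{\ell,n}$; primitivity of the vector $(1,0,1)$ is then immediate by inspection. You instead avoid computing $R$ altogether: you produce the pure quaternion $\beta_n=3+2\psi+2\ell^{n-1}\varphi$ directly, and establish primitivity using only the lattice-level fact $[R:\widetilde{R}]=3$ together with the integrality of $\trd$ and $\nrd$ on $R$, which yields a clean mod-$9$ obstruction. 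Your argument is more conceptual and transfers the burden from an explicit (and somewhat ad hoc) determination of $R$ to a short arithmetic check; the paper's approach, in exchange, gives the full quadratic form \eqref{Quadratic form j0=0}, which could be reused for other vectors.

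Two small remarks. First, the lifting step you attribute to ``Serre--Tate theory and Deuring's theorem'' is precisely Proposition~\ref{lifting} (equivalently \cite[Proposition~2.7]{Gross_Zagier_1985}); it needs $\ell$ inert in the CM field of $\mathcal{O}_n$, but this is automatic once you have embedded $\mathcal{O}_n$ into an order of $\mathbb{B}_{\ell,\infty}$ and know $\ell\nmid\Delta_n$. Second, your sentence that the hypothesis $\ell\equiv 2\pmod 3$ ``enters decisively'' in the mod-$9$ step is slightly misleading: that obstruction only requires $\gcd(\ell,3)=1$. The hypothesis is used earlier and is indispensable there---it guarantees that $E_0$ is supersingular so that $R$ is a quaternion order in $\mathbb{B}_{\ell,\infty}$ and Theorem~\ref{deformation} applies.
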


To prove the theorem, we need two preliminary results.

\begin{proposition} \label{quaternionic order for j0=0}
Let $\ell \geq 5$ be a prime with  $\ell \equiv 2 \text{ mod } 3$ and consider the elliptic curve $E_0:y^2=x^3+1$ defined over $\mathbb{F}_\ell$. Then we have that
\[
\End_{\overline{\mathbb{F}}_\ell}(E_0)=\Z+\Z\zeta_3+\Z \xi+\Z \eta
\]
\noindent
is isomorphic to a maximal order in the quaternion algebra $\mathbb{B}_{\ell, \infty}$. Here, if $\zeta \in \overline{\mathbb{F}}_\ell$ denotes a fixed primitive $3$-rd root of unity, the endomorphisms $\zeta_3, \varphi, \xi, \eta \in \End_{\overline{\mathbb{F}}_\ell}(E_0)$ are such that $\zeta_3: (x,y) \mapsto (\zeta x,y)$, $\varphi:(x,y) \mapsto (x^\ell, y^\ell)$, $3\xi= 2 + \zeta_3 + 2\varphi +\zeta_3 \varphi$ and $3\eta = -1+\zeta_3-\varphi-2\zeta_3 \varphi$.
\end{proposition}

\begin{proof}
This proposition is certainly well known, but the author has not been able to find a suitable reference. One could directly verify that the given order is a maximal order in $\mathbb{B}_{\ell, \infty}$ whose elements represent endomorphisms of the elliptic curve $E_0$. We outline a possible strategy leading to the computation of this endomorphism ring, kindly suggested to the author by John Voight.

Since $\ell \equiv 2 \text{ mod } 3$, the elliptic curve $E_0$ is supersingular and $\mathcal{O}_{E_0}:=\End_{\overline{\F}_\ell} (E_0)$ is a maximal order in the quaternion algebra $\mathbb{B}_{\ell,\infty}$. Throughout this proof, we will always identify $\mathcal{O}_{E_0}$ with its image in $\mathbb{B}_{\ell,\infty}$ under some fixed embedding. Notice that $\mathcal{O}_{E_0}$ contains the subring $\mathcal{O}:= \mathbb{Z}[\zeta_3,\varphi]$. As we have $\varphi^2=-\ell$, the ring $\mathcal{O}$ is actually a rank $4$ suborder of $\mathcal{O}_{E_0}$ having $\mathbb{Z}$-basis $\{1,\zeta_3, \varphi, \zeta_3 \varphi \}$, and a discriminant computation shows that $|\mathcal{O}_{E_0} : \mathcal{O}|=3$. Hence, $\mathcal{O}_{E_0}$ contains an element of the form
\[
\alpha=\frac{A+B\zeta_3+C\varphi+D\zeta_3 \varphi}{3}, \hspace{0.5cm} A,B,C,D \in \Z
\]

with $3 \nmid \gcd(A,B,C,D)$. Since $\alpha$ is an element of a quaternion order, it is in particular integral. This implies that its reduced trace and norm must both be integers. One has
\begin{align*}
&\trd(\alpha)=\frac{2A-B}{3} \\
&\nrd(\alpha)= \frac{-\ell CD-AB+A^2+B^2+\ell (C^2+D^2)}{9},
\end{align*}
where $\trd(\cdot)$ and $\nrd(\cdot)$ denote respectively the reduced trace and the reduced norm in the quaternion algebra $\mathbb{B}_{\ell, \infty}$. Note now that, since $\Ogotic \subseteq \Ogotic_{E_0}$, the integers $A,B,C,D$ can be chosen to lie in $\{0,1,2\}$. Hence there is just a finite number of possibilities to check. A computation shows that the possible options for the tuple $(A,B,C,D)$ are the following four:
$$(1,2,1,2) \hspace{0.5cm} (1,2,2,1) \hspace{0.5cm} (2,1,1,2) \hspace{0.5cm} (2,1,2,1).$$
By adding the corresponding $\alpha$'s to the order $\Ogotic$ we get the following possibilities:
\begin{align*}
&(1,2,1,2), \hspace{0.5cm} \Ogotic_1: \Z+\Z \zeta_3+\Z\left(\frac{1}{3}-\frac{1}{3} \zeta_3 + \frac{1}{3}\varphi+\frac{2}{3}\zeta_3 \varphi\right) + \Z \left(-\frac{2}{3}-\frac{1}{3} \zeta_3 - \frac{2}{3}\varphi-\frac{1}{3}\zeta_3 \varphi\right) \\
&(1,2,2,1), \hspace{0.5cm} \Ogotic_2: \Z+\Z \zeta_3+\Z\left(\frac{1}{3}-\frac{1}{3} \zeta_3 + \frac{2}{3}\varphi+\frac{1}{3}\zeta_3 \varphi\right) + \Z \left(-\frac{2}{3}-\frac{1}{3} \zeta_3 - \frac{1}{3}\varphi-\frac{2}{3}\zeta_3 \varphi\right) \\
&(2,1,1,2), \hspace{0.5cm} \Ogotic_3: \Z+\Z \zeta_3+\Z\left(\frac{2}{3}+\frac{1}{3} \zeta_3 + \frac{1}{3}\varphi+\frac{2}{3}\zeta_3 \varphi\right) + \Z \left(-\frac{1}{3}+\frac{1}{3} \zeta_3 - \frac{2}{3}\varphi-\frac{1}{3}\zeta_3 \varphi\right) \\
&(2,1,2,1), \hspace{0.5cm} \Ogotic_4: \Z+\Z \zeta_3+\Z\left(\frac{2}{3}+\frac{1}{3} \zeta_3 + \frac{2}{3}\varphi+\frac{1}{3}\zeta_3 \varphi\right) + \Z \left(-\frac{1}{3}+\frac{1}{3} \zeta_3 - \frac{1}{3}\varphi-\frac{2}{3}\zeta_3 \varphi\right). \\
\end{align*}

Looking at the generators of these orders, we see that $\Ogotic_1=\Ogotic_4$ and $\Ogotic_2=\Ogotic_3$, so we discard the first two and we only consider $\Ogotic_3$ and $\Ogotic_4$. We need to decide which of these two rings is the "correct one". Indeed, the desired order must be identified with the endomorphism ring of the elliptic curve $E_0$. An element of the form $\frac{1}{3} \beta$, with $\beta \in \End_{\overline{\mathbb{F}}_\ell}(E_0)$, is an endomorphism of $E_0$ if and only if the endomorphism $\beta$ factors through the multiplication-by-3 morphism. This happens if and only if the $3$-torsion points of $E_0$ are in the kernel of $\beta$. The idea is then to compute the generators of the group of $3$-torsion points of $E_0$ and to test which order contains the "right" elements. The $3$-division polynomial of $E_0$ is
\[
\Phi_3(x)=3x(x^3+4),
\]
so we can choose as generators of the full $3$-torsion subgroup $E_0[3](\overline{\mathbb{F}}_\ell)$ the points
\[
P=(0,1), \hspace{1cm} Q=(-\sqrt[3]{4}, \sqrt{-3})
\]
for fixed choices of $\sqrt[3]{4}, \sqrt{-3} \in \overline{\mathbb{F}}_\ell$ as follows. Observe that for a prime $\ell \geq 5$ and $\ell \equiv 2 \text{ mod } 3$, all elements in $\F_\ell$ are cubes and $-3$ is not a square modulo $\ell$.
In view of this remark, we choose $Q$ in such a way that the first coordinate lies in $\mathbb{F}_\ell$. The second coordinate of $Q$ defines in any case a quadratic extension of $\mathbb{F}_\ell$, so that
\[
(\sqrt{-3})^\ell=-\sqrt{-3}.
\]
We are ready to verify that $\Ogotic_4$ is the correct order. Let 
\begin{align*}
&\Phi=2+\zeta_3+2\varphi+\zeta_3 \varphi \in \mathcal{O}_{E_0}\\
& \Psi=-1+\zeta_3-\varphi-2\zeta_3 \varphi \in \mathcal{O}_{E_0}.
\end{align*}
Then, using the fact that $2P=-P$ and $2Q=-Q$ we get that $\Phi=\Psi$ on the $3$-torsion points, so
\begin{align*}
&\Phi (P)=[2](0,1)+(0,1)+[2](0,1)+(0,1)=0 \\
&\Phi (Q)=(-\sqrt[3]{4},-\sqrt{-3})+(-\zeta \sqrt[3]{4}, \sqrt{-3})+[2]((-\sqrt[3]{4})^\ell, (\sqrt{-3})^\ell)+(\zeta (-\sqrt[3]{4})^\ell,(\sqrt{-3})^\ell)=0 
\end{align*}
which shows that $E_0[3] \subseteq \ker \Phi$ and $E_0[3] \subseteq \ker \Psi$. One can also verify that 
$$(2+\zeta_3+\varphi+2\zeta_3 \varphi)(Q) \neq 0.$$
This proves the proposition.
\end{proof}

\begin{proposition} \label{lifting}
Let $\mathcal{O}$ be an order in an imaginary quadratic field $K$ and $\ell \in \mathbb{N}$ be a prime inert in $K$ that does not divide the conductor of $\mathcal{O}$. Let $W$ be the ring of integers in the completion $\widehat{\mathbb{Q}^{\text{unr}}_\ell}$ of the maximal unramified extension of $\mathbb{Q}_\ell$, with uniformizer $\pi \in W$. Fix $n \in \mathbb{Z}_{>0}$ and let $E_0 \to \Spec(W/\pi^n)$ be an elliptic scheme such that the reduction modulo $\pi$ is supersingular. If $f: \mathcal{O} \hookrightarrow \End_{W/\pi^n}(E_0)$ is an optimal embedding, then there exists an elliptic curve $E_{/W}$ such that
\begin{itemize}
    \item $E \text{ mod } \pi^n \cong E_0$;
    \item $\End_W(E) \cong \mathcal{O}$.
\end{itemize}
\end{proposition}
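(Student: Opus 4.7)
The plan is to construct the desired lift $E$ in two stages: first produce a CM elliptic curve $E/W$ using Deuring's lifting theorem, and then identify $E \text{ mod } \pi^n$ with $E_0$ via the rigidity of deformations admitting a CM action.

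For the first stage, set $\tilde{E}_0 := E_0 \text{ mod } \pi$ and let $\bar{f}: \mathcal{O} \hookrightarrow \End_{\overline{\mathbb{F}}_\ell}(\tilde{E}_0)$ be the composition of $f$ with the reduction modulo $\pi$. Applying Deuring's lifting theorem \cite[Chapter 13, Theorem 14]{Lang_1987} to the pair $(\tilde{E}_0, \bar{f})$ -- lifting in particular the image under $\bar{f}$ of the generator $\tfrac{\Delta + \sqrt{\Delta}}{2}$ of $\mathcal{O}$ -- produces an elliptic curve $E/W$ with $\End_W(E) \cong \mathcal{O}$ together with an isomorphism $E \text{ mod } \pi \xrightarrow{\sim} \tilde{E}_0$ compatible with the two $\mathcal{O}$-actions. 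This yields the second of the two conditions required of $E$ in the statement.

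For the second stage, it remains to show $E \text{ mod } \pi^n \cong E_0$. Both are deformations of $\tilde{E}_0$ over $\Spec(W/\pi^n)$ equipped with an $\mathcal{O}$-action reducing to $\bar{f}$ on the special fiber: on $E \text{ mod } \pi^n$ via the CM structure on $E$ (which remains optimal by Theorem \ref{deformation}(a)), and on $E_0$ via the given optimal embedding $f$. Since $\ell$ is inert in $K$ and coprime to the conductor of $\mathcal{O}$, the order $\mathcal{O}$ is locally maximal at $\ell$, so the action of $\mathcal{O}$ on the formal group of $E_0$ upgrades to an action of the maximal order of $K_\ell$ -- that is, a Lubin--Tate action. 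By the Serre--Tate theory of canonical and quasi-canonical liftings (see \cite{Gross_1986}), a deformation of $\tilde{E}_0$ to $W/\pi^n$ admitting such a formal-group action extending $\bar{f}$ is unique up to a unique isomorphism respecting this action. The sought isomorphism $E \text{ mod } \pi^n \cong E_0$ follows.

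The main obstacle is making the rigidity statement in the last step precise at each finite level $n$: one has to verify that the hypothesis ``$f$ is optimal'' is exactly the data needed to pin down the deformation $E_0$ within the Serre--Tate deformation space of $\tilde{E}_0$. Concretely, the parametrization $\End_{W/\pi^n}(E_0) \cong \mathcal{O} + \ell^{n-1} A_{\ell,0}$ provided by Theorem \ref{deformation}(b) identifies the filtration of endomorphism rings with the filtration of deformations in Gross's theory, so that an optimal $\mathcal{O}$-action at level $n$ corresponds precisely to the canonical lift truncated modulo $\pi^n$. This compatibility is already implicit in \cite[Section 6]{Lauter_Viray_2015}, which should allow the argument to go through without further complications.
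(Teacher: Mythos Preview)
Your two-stage strategy is genuinely different from the paper's. The paper does not separate ``lift from the special fibre'' and ``identify at level $n$'': it invokes Gross and Zagier's refinement of Deuring lifting \cite[Proposition~2.7]{Gross_Zagier_1985}, which takes as input the pair $(E_0,\alpha_0)$ over $W/\pi^n$ together with the eigenvalue $w_0\in W/\pi^n$ of $\alpha_0$ on $\mathrm{Lie}(E_0)$, and directly produces the lift $(E,\alpha)$ over $W$ once one checks that $w_0$ lifts to a root $w\in W$ of the minimal polynomial of the generator of $\mathcal{O}$. That check is a short Hensel's-lemma argument, and this is exactly where the hypotheses ``$\ell$ inert in $K$'' and ``$\ell$ coprime to the conductor'' enter (they force the minimal polynomial to be separable modulo $\ell$). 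Optimality of $f$ is then used only at the very end, to rule out $\End_W(E)\supsetneq\mathcal{O}$.

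Your route via the canonical lift is also workable, but two points in the write-up are misplaced. First, classical Deuring applied to $(\tilde E_0,\bar f)$ only gives $\End_W(E)\supseteq\mathcal{O}$; equality would require $\bar f$ to be optimal into $\End_{\overline{\mathbb{F}}_\ell}(\tilde E_0)$, which does not follow immediately from optimality of $f$ at level $n$. Second, optimality is not what drives the rigidity in stage two: any $\mathcal{O}$-action on $E_0$, optimal or not, already yields an action of $\mathcal{O}\otimes\mathbb{Z}_\ell$ on the formal group, and since $\ell$ is coprime to the conductor this is the full local maximal order, so Lubin--Tate already pins $E_0$ down as the canonical lift modulo $\pi^n$. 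The correct place to use optimality is \emph{after} you have established $E\bmod\pi^n\cong E_0$: then $\End_W(E)\hookrightarrow\End_{W/\pi^n}(E_0)$ compatibly with $f$, and optimality forces $\End_W(E)=\mathcal{O}$ --- exactly as in the paper's closing step. Note also that your appeal to Theorem~\ref{deformation}(b) to describe $\End_{W/\pi^n}(E_0)$ is circular, since that theorem presupposes $E_0$ arises from a CM lift over $W$, which is precisely what you are trying to prove.
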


\begin{proof}
This is an application of Gross and Zagier's generalization \cite[Proposition 2.7]{Gross_Zagier_1985} of the Deuring lifting Theorem \cite[Theorem 13.14]{Lang_1987}. Note that the proof of Gross and Zagier's result in the supersingular case does not require, in their notation, the ring $\mathbb{Z}[\alpha_0]$ to be integrally closed but only $\ell$ not dividing its conductor.

Write $\mathcal{O}=\mathbb{Z}[\tau]$ for some imaginary quadratic $\tau \in K$ and let $\alpha_0:=f(\tau)$. The endomorphism $\alpha_0$ induces on the tangent space $\text{Lie}(E_0)$ the multiplication by an element $w_0 \in W/\pi^n$ which is a root of the minimal polynomial $g(x)=x^2+Ax+B \in \mathbb{Z}[x]$ of $\tau$ over $\mathbb{Q}$. In order to apply \cite[Proposition 2.7]{Gross_Zagier_1985}, we need to show that there exists $w \in W$ such that $g(w)=0$ and $w \mod \pi^n=w_0$. Let $\beta:= w_0 \text{ mod } \pi \in \overline{\mathbb{F}}_\ell$. Then $\beta$ is a root of $g(x) \text{ mod } \pi$ lying in $\overline{\mathbb{F}}_\ell$. If $g'(\beta)=0$, then $\beta$ would actually lie in $\mathbb{F}_\ell$. However, since $\ell$ is inert in $K$ and does not divide the conductor of $\mathcal{O}$, the polynomial $g(x)$ is irreducible over $\mathbb{F}_\ell$ by the Kummer-Dedekind Theorem \cite[Proposition I.8.3]{Neukirch_book_1999}, and this implies that the derivative of $g(x)$ does not vanish on $\beta$ (an irreducible polynomial over a finite field has never a common zero with its derivative).
Then by Hensel's lemma there exists a unique $w \in W$ lifting $\beta$. This $w$ satisfies $g(w)=0$ and $w \text{ mod } \pi^n = w_0$ by construction.

We now apply \cite[Proposition 2.7]{Gross_Zagier_1985} to deduce that there exists an elliptic curve $E_{/W}$ and an endomorphism $\alpha \in \End_W(E)$ such that $E \text{ mod } \pi^n \cong E_0$ and $\alpha \text{ mod } \pi^n = \alpha_0$. In principle, the ring $\End_W(E)$ could strictly contain the order $\mathbb{Z}[\alpha]$. However, the reduction map identifies $\mathbb{Z}[\alpha]$ with $\mathcal{O}$, and the latter optimally embeds in $\End_{W/\pi^n}(E_0)$. Since the reduction map also embeds $\End_W(E) \hookrightarrow \End_{W/\pi^n}(E_0)$, we deduce that $\End_W(E)= \mathbb{Z}[\alpha] \cong \mathcal{O}$, as wanted. 
\end{proof}

\begin{proof}[Proof of Theorem \ref{optimality of the bounds j-0}]
Let $W$ be the ring of integers in the completion $\widehat{\mathbb{Q}^{\text{unr}}_\ell}$ of the maximal unramified extension of $\mathbb{Q}_\ell$, with uniformizer $\pi \in W$. For every $n \in \mathbb{N}$ let $R_n:=\End_{W/\pi^{n+1}}(E_0)$ be the endomorphism ring of the reduction of $E_0:y^2=x^3+1$ modulo $\pi^{n+1}$. By Theorem \ref{deformation} (a) we know that $R_n \cong \mathbb{Z}[\zeta_3]+\ell^n R_0$, where $R_0$ is the order appearing in the statement of Proposition \ref{quaternionic order for j0=0}.
A computation similar to the one carried out during the proof of Theorem \ref{Theorem valuation j-j0} shows that the ternary quadratic form induced by the reduced norm on the Gross lattice of $R_n$ with basis $\{1+2\zeta_3, \ell^n(2\xi -1), 2\ell^n (\varphi + \zeta_3 \varphi) \}$ is given by
\begin{equation} \label{Quadratic form j0=0}
    Q_{\ell,n}(X,Y,Z)=3X^2+\ell^{2n} \frac{4\ell+1}{3} Y^2 + 4\ell^{2n+1} Z^2+2\ell^n XY + 4\ell^{2n+1} YZ \in \mathbb{Z}[X,Y,Z]
\end{equation}
for all $n \in \mathbb{N}$. Proposition \ref{lifting} combined with Lemma \ref{Gross Lattice} implies in particular that, for any primitive triple of integers $(x,y,z) \in \mathbb{Z}^3$ such that $-D:=Q_{\ell,n}(x,y,z)$ is not divisible by $\ell$, there exists an elliptic curve $E_{/W}$ with complex multiplication by the order of discriminant $D$ and which is isomorphic to $E_0: y^2=x^3+1$ modulo $\pi^{n+1}$.  The primitive triple $(1,0,1)$ gives
\[
Q_{\ell,n}(1,0,1)=3+4\ell^{2n+1}
\]
which is not divisible by $\ell$. The j-invariant of the corresponding elliptic curve $E$ with CM by the order of discriminant $D$ will satisfy, by \cite[Proposition 2.3]{Gross_Zagier_1985}, the inequality
\[
v_\mu(j) \geq 3(n+1) = 3 \left(\frac{\log(|D|-3)}{2 \log \ell} + \frac{1}{2} - \frac{\log 2}{\log \ell }\right)
\]

for some prime $\mu \subseteq H_\mathcal{O}$ lying above $\ell$. This concludes the proof of the theorem.
\end{proof}

\section{A uniformity conjecture for singular moduli} \label{sec:uniformity conjecture}

In this final section we make some speculations, based on computer-assisted numerical calculations, concerning differences of singular moduli that are $S$-units. The starting point of our discussion is the following observation, which was already made in a previous version of this manuscript (compare also with \cite[Question 1.2]{Herrero_Menares_Rivera_Part3}): numerical computations seem to show that $j_{-11}=-2^{15}$, which is the unique singular modulus relative to the order of discriminant $\Delta=-11$, may also be the only singular modulus that is an $\{\ell\}$-unit for some prime $\ell$. In other words, it seems that the set $\mathcal{J}_1$ of singular moduli that are $S$-units for some set of primes $S$ of cardinality $1$ contains only one element, namely $j_{-11}$. It appears then natural to ask what happens if we increase the cardinality of the set $S$. Motivated by this question, we have performed some computations, whose results are displayed in Table \ref{Table singular S-units}. Let us describe the notation and the content of this table.

\begin{table}[b]
    \centering
    \begin{tabular}{cccc}
    \toprule
    $s$ & $\# \mathcal{J}_{s}(50000)$ & $\Delta_{\text{max},s}(50000)$ & primes appearing in the factorizations \\
    \midrule 
    $1$ & $1$ & $-11$ & $2$ \\
    \midrule
    $2$ & $9$ & $-83$ & 2, 3, 5, 11 \\
    \midrule
    $3$ & $28$ & $-227$ & $2,3,5,11,17,23,29,41 $ \\
    \midrule
    $4$ & $67$ & $-523$ & $2, 3, 5, 11, 17, 23, 29, 41, 47, 53, 59, 71, 83, 89 $ \\
    \midrule
    \multirow{2}{*}{$5$} & \multirow{2}{*}{$119$} & \multirow{2}{*}{$-987$} & $2, 3, 5, 7, 11, 17, 23, 29, 41, 47, 53, 59, 71, 83, 89,$  \\
     & & & $101, 107, 113, 131, 137, 149, 167, 173, 179, 281, 317 $\\
     \midrule
     \multirow{4}{*}{$6$} & \multirow{4}{*}{$195$} & \multirow{4}{*}{$-2043$} & $2, 3, 5, 7, 11, 13, 17, 23, 29, 41, 47, 53, 59, 71, 83, 89, $  \\
     & & & $101, 107, 113, 131, 137, 149, 167, 173, 179, 191, 197, $\\
     & & & $227, 233, 239, 251, 257, 263, 269, 281, 293, $\\
     & & & $ 311, 317, 353, 383 $ \\
     \midrule
     \multirow{5}{*}{$7$} & \multirow{5}{*}{$291$} & \multirow{5}{*}{$-2587$} & $2, 3, 5, 7, 11, 13, 17, 23, 29, 41, 47, 53, 59, 71, 83, 89, $  \\
     & & & $101, 107, 113, 131, 137, 149, 167, 173, 179, 191, 197,  $\\
     & & & $227, 233, 239, 251, 257, 263, 269, 281, 293, 311, 317, $\\
      & & & $ 347, 353, 359, 383, 389, 419, 431, 449, 467, 491, $\\
      & & & $  509, 521, 557, 569, 617, 641, 653, 677$ \\
\bottomrule \\
    \end{tabular}
    \caption{The table displays for $s\in \{1,...,7\}$ the number of imaginary quadratic discriminants up to $-5\cdot 10^4$ for which the corresponding singular moduli are $S$-units with $\#S=s$ (second column). The third column shows the biggest among the found discriminants and the fourth column shows all the primes appearing in the factorizations of the norms of the corresponding singular moduli.}
    \label{Table singular S-units}
\end{table}

If a singular modulus of discriminant $\Delta$ is an $S$-unit for some set $S$ of rational primes, then actually all singular moduli of discriminant $\Delta$ are singular $S$-units since, as we discussed in \cref{sec: prelude}, the set of singular moduli relative to the same discriminant form a full Galois orbit over $\mathbb{Q}$. For every $s,A \in \mathbb{N}$ denote then by $\mathcal{J}_s$ the set of Galois orbits of singular moduli that are $S$-units for some set $S$ of rational primes satisfying $\#S=s$ and by $\mathcal{J}_{s}(A)$ the subset of $\mathcal{J}_s$ consisting of those orbits whose corresponding singular moduli have discriminant $\Delta$ satisfying $|\Delta| \leq A$. 
Similarly, denote by $\Delta_{\text{max},s}$ (resp. $\Delta_{\text{max},s}(A)$) the biggest (in absolute value) imaginary quadratic discriminant such that there exists a singular modulus of discriminant $\Delta_{\text{max},s}$ whose Galois orbit belongs to $\mathcal{J}_s$ (resp. to $\mathcal{J}_{s}(A)$). If $\mathcal{J}_s$ is an infinite set, we put $\Delta_{\text{max},s}=-\infty$. Clearly, for every pair of natural numbers $A_1 \leq A_2$ we have
\[
    |\Delta_{\text{max},s}(A_1)| \leq |\Delta_{\text{max},s}(A_2)| \leq |\Delta_{\text{max},s}|.
\]
In Table \ref{Table singular S-units} we have computed, with the help of SAGE \cite{sagemath}, the cardinality of $\mathcal{J}_{s}(50000)$ for $s \in \{1,...,7\}$, and the corresponding $\Delta_{\text{max},s}(50000)$. Moreover, in the last column we have collected all the primes appearing in the norm factorizations of $j \in \mathcal{J}_{s}(50000)$.

The results displayed in Table \ref{Table singular S-units} show, for small values of $s \in \mathbb{N}$, that $\Delta_{\text{max},s}(50000)$ is much smaller compared to the bound $|\Delta| \leq 50000$ up to which we have performed our computations. For instance, we see that among all the Galois orbits of singular moduli with discriminant $|\Delta| \leq 50000$, only $9$ orbits contain singular $S$-units for some set $S$ with $\#S \leq 2$. Moreover, the biggest discriminant associated to a singular modulus belonging to one of these $9$ orbits is $\Delta=-83$. All this seems to suggest that $\Delta_{\text{max},s}(A)$ will remain constant for all $A \geq 50000$ \textit{i.e.} that $\Delta_{\text{max},s}(50000)=\Delta_{\text{max},s}$ for $s \in \{1,...,7\}$, which would mean that the number of primes dividing the norm of a singular modulus must increase as the absolute value of its discriminant gets bigger. If this were actually true, then the last column of Table \ref{Table singular S-units} would show which primes a set $S$ of cardinality $s$ must contain in order for the set of singular $S$-units whose norm has exactly $s$ prime factors to be non-empty (but for some of the resulting $s$-tuples the corresponding set of singular $S$-units is empty). For example, it seems from these computations that the set of singular $\{17,23\}$-units does not contain any singular modulus. All this discussion leads to the formulation of the following conjecture.

\begin{conjecture}[Uniformity conjecture for singular $S$-units] \label{uniformity conjecture}
For every $s \in \mathbb{N}$, the set $\mathcal{J}_s$ is finite.
\end{conjecture}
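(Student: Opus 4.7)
Since a singular modulus $j$ is an algebraic integer, it is an $S$-unit for some set of rational primes with $\#S \le s$ precisely when $\omega(|N_{\mathbb{Q}(j)/\mathbb{Q}}(j)|) \le s$, where $\omega(\cdot)$ denotes the number of distinct prime divisors. Consequently, Conjecture \ref{uniformity conjecture} is equivalent to the assertion that $\omega(|N_{\mathbb{Q}(j)/\mathbb{Q}}(j)|) \to \infty$ as $|\Delta| \to \infty$, where $\Delta$ ranges over the discriminants of singular moduli. My plan is to establish this asymptotic growth statement.

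The first step would combine the main tool of the paper with the classical archimedean estimates on singular moduli. Applying Theorem \ref{Theorem valuation j-j0} with $j_0 = 0$, $\Delta_0 = -3$, and $d_0 = 6$ yields for every rational prime $\ell$ coprime to $\Delta$ and every prime ideal $\mu \subseteq \mathbb{Q}(j)$ above $\ell$ the inequality
\[
v_\mu(j) \;\le\; 3\left( \frac{\log(9|\Delta|)}{2 \log \ell} + \frac{1}{2} \right).
\]
Summing over all primes of $\mathbb{Q}(j)$ above $\ell$ with their residue degrees gives $v_\ell(N(j)) \log \ell \le \tfrac{3}{2}\, C_\Delta \log(9|\Delta|) + O(C_\Delta \log \ell)$. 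On the other hand, the lower bound $h(j) \ge (\pi|\Delta|^{1/2} - 0.01)/C_\Delta$ of \cite{BHK_2018}, together with the fact that $|j^\sigma| > 1$ for $|\Delta|$ sufficiently large, translates to $\log |N(j)| \ge \pi|\Delta|^{1/2} - o(C_\Delta)$. Decomposing $\log|N(j)| = \sum_\ell v_\ell(N(j)) \log \ell$ and inserting the valuation bound then yields
\[
\omega(|N(j)|) \;\ge\; \frac{\pi |\Delta|^{1/2}}{\tfrac{3}{2}\, C_\Delta \log|\Delta|}\bigl(1 + o(1)\bigr).
\]

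The principal obstacle is now apparent. For "generic" $\Delta$, where $C_\Delta \sim c\,|\Delta|^{1/2}$ by the class number formula, this lower bound is $O(1/\log|\Delta|)$; for the Siegel-type worst case $C_\Delta \ll |\Delta|^{1/2}\log|\Delta|$ it actually tends to zero. Height-theoretic inequalities alone therefore cannot force $\omega \to \infty$, and one must appeal to structural information on the factorization of $N(j)$. The natural tool is Gross--Zagier's explicit formula \cite{Gross_Zagier_1985} which, for $\Delta$ coprime to $3$, writes a fixed power of $|N(j)|$ as an explicit product over triples $(x,n,n')$ of positive integers satisfying $x^2 + 4 n n' = 3|\Delta|$; each prime divisor of $N(j)$ arises from a prime factor of some admissible $n'$, and the primes that appear are governed by the representation theory of the ternary form $x^2 + 4yz$.

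The hardest part, I expect, is to extract a genuine lower bound for $\omega$ from this combinatorial description. Roughly, one needs to show that the primes appearing in the Gross--Zagier factorization are sufficiently "diverse", in the sense that no bounded collection of small primes can account for all the representations $3|\Delta| = x^2 + 4nn'$. A plausible route would combine (i) effective Chebotarev under GRH, producing a positive proportion of primes $\ell \le |\Delta|^{1/2}$ simultaneously inert in $\mathbb{Q}(\sqrt{\Delta})$ and $\mathbb{Q}(\sqrt{-3})$, (ii) Duke-type equidistribution of Galois orbits of CM points, to ensure that a positive proportion of such primes actually divides $N(j)$, and (iii) a pigeonhole argument using Theorem \ref{Theorem valuation j-j0} to convert a large number of contributing primes into a large value of $\omega$. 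Unconditional control on (i) --- indeed even the question of whether $\omega(N(j))$ grows at all without hypotheses --- seems to be the central stumbling block, and would likely require either new analytic input on low-lying zeros of imaginary quadratic $L$-functions, or an effective $abc$/Vojta-type bound applied to the pair $(j,\, j-1728)$.
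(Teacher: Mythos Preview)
The statement you are addressing is a \emph{conjecture}, not a theorem: the paper does not prove it, but only formulates it on the basis of the numerical data in Table~\ref{Table singular S-units} and the equivalence argument following the conjecture. There is therefore no proof in the paper against which to compare your proposal.

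Your write-up is not a proof either, and you acknowledge this explicitly. The first step is sound and reproduces exactly the kind of estimate the paper obtains (cf.\ \eqref{non-Archimedean estimate j-0}); your observation that the resulting lower bound for $\omega(|N(j)|)$ degenerates to $O(1/\log|\Delta|)$ for generic $\Delta$ is precisely why the paper treats this as an open problem rather than a theorem. The second half of your proposal is a research outline (Gross--Zagier combinatorics, Chebotarev under GRH, Duke-type equidistribution, $abc$/Vojta) with no completed argument, and you yourself identify the unconditional growth of $\omega(N(j))$ as the ``central stumbling block''. That is an honest assessment, but it means what you have submitted is a discussion of possible strategies, not a proof of Conjecture~\ref{uniformity conjecture}. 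One small correction: in applying Theorem~\ref{Theorem valuation j-j0} with $j_0=0$ you cannot take $\ell$ to be an arbitrary prime coprime to $\Delta$; the hypothesis $\ell\nmid\Delta_0=-3$ is required, and to get $d_0=6$ you also need $\ell\ge 5$.
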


We could have equivalently formulated the above conjecture by saying that for every finite set $S$ of rational primes, the set of singular $S$-units is finite and its cardinality can be bounded only in terms of the cardinality of $S$, regardless from the primes contained in the latter set. The fact that this statement is equivalent to Conjecture \ref{uniformity conjecture} can be seen as follows: suppose that for every $s \in \mathbb{N}$ there exists a constant $C(s) \geq 0$ such that the set of singular $S$-units has cardinality bounded by $C(s)$ whenever $S$ is a set of rational primes satisfying $\#S=s$. Since being an $S$-unit is Galois invariant, this implies that $C(s)$ also bounds the size of the Galois orbit of any such singular $S$-unit, hence the class number of the corresponding imaginary quadratic order. By the Brauer-Siegel Theorem \cite[Chapter XIII, Theorem 4]{Lang_book_NT_1994} this entails a bound on the discriminant of any singular $S$-unit with $\#S=s$. Hence any such singular modulus must lie in a finite set that depends only on $s$, but not on $S$ and Conjecture \ref{uniformity conjecture} follows.

Inspecting the computations displayed in Table \ref{Table singular S-units}, one could also try to be more precise on the cardinality of the sets $\mathcal{J}_s$. For instance, we may ask the following

\begin{question}
Is it true that there exists only $1$ singular modulus which is an $S$-unit for $\#S=1$, and $9$ Galois orbits of singular moduli that are $S$-units for $\#S=2$?
\end{question}

The author finds it more difficult to formulate precise conjectures on how the number of primes dividing the norm of a singular modulus increases with respect to its discriminant. 

 Of course, there is no reason to restrict our attention to singular $S$-units. One can make similar conjectures for differences of the form $j-j_0$ with $j_0$ a fixed singular modulus. For instance, Table \ref{Table j-1728} shows how the above considerations seem to hold true also for differences of the form $j-1728$. The notation is the same used for Table 1, but with the necessary modifications: $\mathcal{J}_s$ is the set of Galois orbits of singular moduli $j$ such that $j-1728$ is an $S$-unit for some set $S$ of rational primes satisfying $\#S=s$, etc. Further computations with other differences $j-j_0$ would probably shed more light on whether it is possible that for every $s\in \mathbb{N}$, there is only a finite number of singular differences $j_1-j_2$ that are $S$-units for some sets $S$ of cardinality $s$. But we do not want to enter this territory here.

\begin{table}[h]
    \centering
    \begin{tabular}{cccc}
    \toprule
    $s$ & $\# \mathcal{J}_{s}(50000)$ & $\Delta_{\text{max},s}(50000)$ & primes appearing in the factorizations \\
    \midrule 
    $1$ & $0$ & $/$ & $/$ \\
    \midrule
    $2$ & $3$ & $-8$ & 2, 3, 7 \\
    \midrule
    $3$ & $14$ & $-52$ & $2, 3, 7, 11, 19, 23, 31, 43 $ \\
    \midrule
    \multirow{2}{*}{$4$} & \multirow{2}{*}{$31$} & \multirow{2}{*}{$-139$} & $2, 3, 7, 11, 19, 23, 31, 43, 47, 59, 67, 79, 83,  $ \\
     & & & $ 103, 127, 139 $ \\
    \midrule
    \multirow{2}{*}{$5$} & \multirow{2}{*}{$54$} & \multirow{2}{*}{$-259$} & $2, 3, 7, 11, 19, 23, 31, 43, 47, 59, 67, 71, 79, 83,$  \\
     & & & $103, 107, 127, 139, 151, 163, 211, 223 $\\
     \midrule
     \multirow{3}{*}{$6$} & \multirow{3}{*}{$93$} & \multirow{3}{*}{$-571$} & $2, 3, 5, 7, 11, 19, 23, 31, 43, 47, 59, 67, 71, 79, 83, $  \\
     & & & $ 103, 107, 127, 131, 139, 151, 163, 167, 179, 191, $\\
     & & & $ 199, 211, 223, 271, 283, 307, 331, 571 $\\
     \midrule
     \multirow{5}{*}{$7$} & \multirow{5}{*}{$145$} & \multirow{5}{*}{$-835$} & $ 2, 3, 5, 7, 11, 19, 23, 31, 43, 47, 59, 67, 71, 79, 83, $  \\
     & & & $103, 107, 127, 131, 139, 151, 163, 167, 179,  191, $\\
     & & & $ 199, 211, 223, 227, 239, 251, 271, 283, 307, 311,$\\
      & & & $   331, 367, 379, 383, 439, 463, 487, 499, 523, 547, $\\
      & & & $   571, 631, 691 $ \\
\bottomrule \\
    \end{tabular}
    \caption{The table displays for $s\in \{1,...,7\}$ the number of imaginary quadratic discriminants up to $-5\cdot 10^4$ for which the corresponding singular moduli $j$ are such that $j-1728$ is an $S$-unit for some $S$ with $\#S=s$ (second column). The third column shows the biggest among the found discriminants and the fourth column shows all the primes appearing in the factorizations of the corresponding norms of $j-1728$.}
    \label{Table j-1728}
\end{table}

\newpage

\newpage

\section*{Acknowledgments}
The author would like to thank Fabien Pazuki for his constant support and for the comments on previous versions of this manuscript. He would also like to thank Philipp Habegger and the University of Basel for their hospitality, and Jared Asuncion, Yuri Bilu, Gabriel Dill, Florent Jouve, Riccardo Pengo, Emanuele Tron for the useful discussions. Finally, he would like to thank the anonymous referee for their careful review work and the great number of suggestions that have very much improved the exposition of the previous version of the manuscript.

The author is grateful to Max Planck Institute for Mathematics in Bonn for its hospitality and financial support. The author is also supported by ANR-20-CE40-0003 Jinvariant.

\vspace{\baselineskip}
\noindent
\framebox[\textwidth]{
\begin{tabular*}{0.96\textwidth}{@{\extracolsep{\fill} }cp{0.84\textwidth}}
\raisebox{-0.7\height}{%
    \begin{tikzpicture}[y=0.80pt, x=0.8pt, yscale=-1, inner sep=0pt, outer sep=0pt, 
    scale=0.12]
    \definecolor{c003399}{RGB}{0,51,153}
    \definecolor{cffcc00}{RGB}{255,204,0}
    \begin{scope}[shift={(0,-872.36218)}]
      \path[shift={(0,872.36218)},fill=c003399,nonzero rule] (0.0000,0.0000) rectangle (270.0000,180.0000);
      \foreach \myshift in 
           {(0,812.36218), (0,932.36218), 
    		(60.0,872.36218), (-60.0,872.36218), 
    		(30.0,820.36218), (-30.0,820.36218),
    		(30.0,924.36218), (-30.0,924.36218),
    		(-52.0,842.36218), (52.0,842.36218), 
    		(52.0,902.36218), (-52.0,902.36218)}
        \path[shift=\myshift,fill=cffcc00,nonzero rule] (135.0000,80.0000) -- (137.2453,86.9096) -- (144.5106,86.9098) -- (138.6330,91.1804) -- (140.8778,98.0902) -- (135.0000,93.8200) -- (129.1222,98.0902) -- (131.3670,91.1804) -- (125.4894,86.9098) -- (132.7547,86.9096) -- cycle;
    \end{scope}
    \end{tikzpicture}%
}
&
This project has received funding from the European Union Horizon 2020 research and
innovation programme under the Marie Sk{\l}odowska-Curie grant agreement No 801199.
\end{tabular*}
}

\printbibliography

\end{document}